\documentclass[12 pt]{article}
\usepackage{amsmath,graphicx,amsfonts,amsthm,amssymb,mathrsfs,latexsym,paralist,xy}
\usepackage{tikz}
\usepackage{soul}
\usepackage{comment}

 \usepackage[normalem]{ulem}

\newtheorem{thm}{Theorem}[section]
\newtheorem{prop}[thm]{Proposition}
\newtheorem{cor}[thm]{Corollary}
\newtheorem{lemma}[thm]{Lemma}
\theoremstyle{remark}
\newtheorem{rmk}[thm]{Remark}
\newtheorem{example}[thm]{Example}
\theoremstyle{definition}
\newtheorem{defn}[thm]{Definition}
\newtheorem{proposition}[thm]{Proposition}

\newcommand{\MCE}{\operatorname{MCE}}

\newcommand{\bi}{\begin{itemize}}
	\newcommand{\ei}{\end{itemize}}
\newcommand{\be}{\begin{enumerate}}
	\newcommand{\ee}{\end{enumerate}}

\newcommand{\C}{\mathbb{C}}

\newcommand{\T}{\mathbb{T}}
\renewcommand{\H}{\mathcal{H}}

\newcommand{\R}{\mathbb{R}}
\newcommand{\N}{\mathbb{N}}
\newcommand{\Z}{\mathbb{Z}}

\providecommand{\keywords}[1]{{\textit{Keywords and phrases:}} #1}
\providecommand{\classification}[1]{{\textit{2010 Mathematics Subject Classification:}} #1}

\def\IoIIdimdots(#1/#2/#3,#4){\node at (#1,#4) {$.$};\node at (#2,#4) {$.$};\node at (#3,#4) {$.$};}
\def\IIoIIdimdots(#1,#2/#3/#4){\node at (#1,#2) {$.$};\node at (#1,#3) {$.$};\node at (#1,#4) {$.$};}

\def\IoIIIdimdots(#1/#2/#3,#4,#5){\node at (#1,#4,#5) {$.$};\node at (#2,#4,#5) {$.$};\node at (#3,#4,#5) {$.$};}
\def\IIoIIIdimdots(#1,#2/#3/#4,#5){\node at (#1,#2,#5) {$.$};\node at (#1,#3,#5) {$.$};\node at (#1,#4,#5) {$.$};}
\def\IIIoIIIdimdots(#1,#2,#3/#4/#5){\node at (#1,#2,#3) {$.$};\node at (#1,#2,#4) {$.$};\node at (#1,#2,#5) {$.$};}

\usepackage[margin=1in]{geometry}
\AtEndDocument{\bigskip{\small%
		\textsc{Carla Farsi: Department of Mathematics, University of Colorado at Boulder, Boulder, CO 80309-0395, USA.} \par
		\textit{E-mail address}: \texttt{carla.farsi@colorado.edu} \par
		\addvspace{\medskipamount}
		\textsc{Elizabeth Gillaspy : Department of Mathematics, University of Montana, 32 Campus Drive \#0864, Missoula, MT 59812-0864, USA.}\par
		\textit{E-mail address}: \texttt{elizabeth.gillaspy@mso.umt.edu}\par
		\addvspace{\medskipamount}
		\textsc{Daniel Gon\c{c}alves : Departamento de Matem\'{a}tica - UFSC - Florian\'{o}polis - SC - 88040-900, Brazil}\par
		\textit{E-mail address}: \texttt{daemig@gmail.com}\par
}}

\begin{document}
	
	\title{Irreducibility and monicity for representations of $k$-graph $C^*$-algebras}
	\author{Carla Farsi, Elizabeth Gillaspy, and Daniel Gon\c{c}alves}
	\date{\today}
	\maketitle
	
	\begin{abstract} 
	The representations of a $k$-graph $C^*$-algebra $C^*(\Lambda)$ which arise from $\Lambda$-semibranching function systems are closely linked to the dynamics of the $k$-graph $\Lambda$.  In this paper, we undertake a systematic analysis of the question of irreducibility for these representations.  We  provide a variety of necessary and sufficient conditions for irreducibility, as well as a number of examples indicating the optimality of our results.  We also explore the relationship between irreducible $\Lambda$-semibranching representations and purely atomic representations of $C^*(\Lambda)$.  
	Throughout the paper, we work in the setting of  row-finite source-free $k$-graphs; this paper constitutes the first analysis of $\Lambda$-semibranching representations at this level of generality. 
	\end{abstract}
	
	\classification{46L05, 46L55, 46K10}
	
	\keywords{$C^*$-algebras, monic representations, higher-rank graphs, $k$-graphs,  $\Lambda$-semibranching function systems.}

	\tableofcontents
\section{Introduction}
Understanding the irreducible representations of a $C^*$-algebra  enables an analysis of its spectrum and primitive ideal space, as well as its representation theory. In addition,  longstanding open questions such as Naimark's problem \cite{Nai51} use irreducible representations to (conjecturally) determine how similar two $C^*$-algebras are.   {However, for non-type I $C^*$-algebras $A$, it is well known \cite{Glimm1, Glimm2} that the natural Borel structure on the space $\widehat A$ of irreducible representations of $A$ is not countably separated.  As many higher-rank graph $C^*$-algebras are not type I, this lack  of a ``reasonable'' parametrization of their irreducible representations  obligates us  to restrict our attention to specific subfamilies of representations when exploring the question of irreducibility.}
In this paper, we study the irreducibility of  the representations of higher-rank graph $C^*$-algebras $C^*(\Lambda)$ which arise from $\Lambda$-semibranching function systems.   

For graph $C^*$-algebras and related constructions, the representations arising from branching systems \cite{MP, DD, DD1, goncalves-li-royer-SBFS, FGKP} provide a key insight into the structure of the  $C^*$-algebra. {Intuitively, a branching system is a family of subsets of a measure space $(X, \mu)$ which reflects the structure of the graph $C^*$-algebra $C^*(\Lambda)$, so that one obtains a natural representation of $C^*(\Lambda)$ on $L^2(X, \mu)$. Indeed, this representation encodes the natural dynamics of the prefixing and coding maps on the space $\Lambda^\infty$ of infinite paths in the graph $\Lambda$: the fact that $C_0(\Lambda^\infty)$ is a subalgebra of $C^*(\Lambda)$ implies that the structure of $C_0(\Lambda^\infty)$, and in particular the dynamics of changing an infinite path $x_1 x_2 x_3 \ldots $ to $x_0 x_1 x_2 x_3 \ldots$ or to $x_2 x_3 \ldots$, must also be reflected in the branching system on $(X, \mu)$. Consequently,  the  study of representations arising from branching systems also enhances our understanding of the symbolic dynamics associated to a graph or higher-rank-graph.}

 In some settings, in fact, every representation  arises from a branching system: {the third author together with D. Royer identify in }\cite{DD, DD1}  a class of graphs for which every representation of the graph $C^*$-algebra is unitarily equivalent to a representation arising from a branching system.  
For higher-rank graphs $\Lambda$, branching systems were introduced as $\Lambda$-semibranching function systems {by the first and second authors together with S. Kang and J. Packer}  in \cite{FGKP}, where  the associated representations were used to construct wavelets and to analyze the KMS states of the higher-rank graph $C^*$-algebra $C^*(\Lambda)$.  Subsequent work by Farsi, Gillaspy, Kang, and Packer together with P. Jorgensen \cite{FGJKP-monic} showed that a large class of representations of $C^*(\Lambda)$ -- the so-called monic representations -- all arise from $\Lambda$-semibranching function systems, and these authors provide in \cite{FGJKP-atomic}  a more detailed analysis of the structure of $\Lambda$-semibranching function systems in the case when the associated measure space is atomic.
The question of when a $\Lambda$-semibranching representation is faithful has been explored in \cite{FGKP,  goncalves-li-royer-SBFS, FGJKP-SBFS}. {However, even in the previously mentioned studies, there has not been a specific emphasis on irreducible representations, 
which instead constitute the focus of this research.}  {Although the primitive ideal space of higher-rank graph $C^*$-algebras is well understood \cite{CKSS, Kang-Pask},} {the broad applicability of $\Lambda$-semibranching function systems has inspired us to explore the question of when the associated representations of $C^*(\Lambda)$ are irreducible.}

Higher-rank graphs (or $k$-graphs) are a $k$-dimensional generalization of directed graphs (1-graphs). Introduced by Kumjian and Pask in \cite{KP}, $k$-graphs provide {a framework for} combinatorial constructions of $C^*$-algebras {and shift spaces}, beyond the setting of graph and Cuntz--Krieger $C^*$-algebras, {and graph and Markov shift spaces. In fact,} in the decades since their introduction, $k$-graph $C^*$-algebras have led to advances in symbolic dynamics  \cite{skalski-zacharias, carlsen-ruiz-sims} and noncommutative geometry \cite{hajac-sims}, as well as insights into the dimension theory of $C^*$-algebras \cite{ruiz-sims-sorensen}.
Higher-rank graph $C^*$-algebras  share many of the important properties of Cuntz and Cuntz--Krieger  $C^*$-algebras (of which they are generalizations~\cite{Cuntz, Cuntz-Krieger,enomoto-watatani,kpr}), 
including Cuntz--Krieger uniqueness theorems and realizations as 
groupoid $C^*$-algebras.

{As we mentioned above} the main objective of this paper is to provide a variety of necessary and sufficient conditions for a $\Lambda$-semibranching function system to give rise to an irreducible representation of $C^*(\Lambda)$. 
 {In our work we will establish a strong link beteween the representation theory of $k$-graph $C^*$-algebras and the symbolic dynamics associated to a $k$-graph, by detecting irreducibility of a representation arising from $\Lambda$-semibranching functions systems in terms of conditions on the coding maps.} We focus on the setting of row-finite source-free $k$-graphs, whose $\Lambda$-semibranching function systems have not yet been analyzed in the literature. There are  nontrivial structural differences between the finite $k$-graph case and the row-finite one: for example, the infinite path space in the row-finite case is typically 
non-compact. To obtain our main results, we consequently need to extend a  number of results which had previously been established in the literature only for finite $k$-graphs.

{We now describe 
the content of this paper}.  In addition to reviewing the relevant background material (including definitions of higher-rank graphs and their $\Lambda$-semibranching function systems) in Section \ref{sec:background}, we also establish two new results in that section. 
Namely, Proposition~\ref{prop:restriction-gives-SBFS} shows that, given an invariant subset $E\subseteq X$ and a $\Lambda$-semibranching representation on $(X, \mu)$, we also obtain a $\Lambda$-semibranching representation on $L^2(E, \mu)$ under mild hypotheses.  Theorem \ref{thm:meas-from-eigenvect} describes a method for constructing measures on the infinite path space of $\Lambda^\infty$, ensuring that we have a supply of examples of $\Lambda$-semibranching function systems. 
Then, Section~\ref{sec:monicity} establishes that for row-finite higher-rank graphs, the  characterization of those $\Lambda$-semibranching representations which are unitarily equivalent to representations on the infinite path space $\Lambda^\infty$ is analogous to the known characterization  for finite higher-rank graphs.  {In more detail, Theorem~\ref{thm-characterization-monic-repres} and Theorem~\ref{thm:range-sets-generate-sigma-alg-in-monic} give respectively a representation-theoretic and a measure-theoretic characterization of when a $\Lambda$-semibranching representation is equivalent to one arising from $\Lambda^\infty$. Even when restricted to the setting of finite higher-rank graphs, Theorem~\ref{thm:range-sets-generate-sigma-alg-in-monic} is stronger than the existing results in the literature.
We also explore the relationship between monic representations and the periodicity of $\Lambda$ in Section~\ref{sec:monicity}, and obtain in Corollary~\ref{prop:no-entrance-implies-no-monic-repns}
a new necessary condition for the monicity of branching representations for directed graphs containing cycles without entrance.  }

Section~\ref{sec:irred} is the main contribution of this paper {and where we establish connections between irreducibility of a representation arising from $\Lambda$-semibranching function system and the dynamics associated to a $k$-graph}. Here we provide a variety of necessary (Propositions~\ref{ventosul} and \ref{prop:irred-implies}, and Theorem~\ref{prop:cofinal})
and sufficient (Theorems~\ref{thm:ergodic} and \ref{thm:ergodic-inf-path-space}, and Proposition \ref{thm:OA-irred})
conditions for a $\Lambda$-semibranching function system to give rise to an irreducible representation of $C^*(\Lambda)$.  
{Many of these necessary and sufficient conditions deal with the ergodicity of the dynamics of the $\Lambda$-semibranching function system.}  We also wish to highlight  Theorem~\ref{prop:cofinal}, which shows that only cofinal $k$-graphs admit irreducible $\Lambda$-semibranching representations.
 {Although we provide examples which indicate that our necessary conditions are not in general sufficient, when the measure space $X$ of  the $\Lambda$-semibranching function system is $\Lambda^\infty$, the situation is different:  Proposition~\ref{ventosul} and Theorem~\ref{thm:ergodic} combine to imply that a $\Lambda$-semibranching representation on $(\Lambda^\infty, \mu)$ is irreducible precisely when the coding maps are jointly ergodic with respect to $\mu$.  Motivated in part by Theorem \ref{prop:cofinal},
 we also provide  sufficient conditions in Theorems~\ref{thm:ergodic} and \ref{thm:ergodic-inf-path-space} for irreducibility of a $\Lambda$-semibranching representation arising from a proper subset $X$ of $\Lambda^\infty$. {In particular, Example~\ref{ex:not-cofinal-but-irred}  shows that, using Theorem \ref{thm:ergodic-inf-path-space}, one can  use a $\Lambda$-semibranching function system to obtain an irreducible representation of $C^*(\Lambda)$ even if $\Lambda$ is not necessarily cofinal. }

We conclude this paper in Section~\ref{sec:atomic} by studying the irreducibility of atomic $\Lambda$-semibranching representations. {Indeed, Theorem~\ref{thm:irred-repres-atom-purely-atomic} shows that any irreducible $\Lambda$-semibranching representation on an atomic measure space is purely atomic in the sense of \cite{FGJKP-atomic}, and 
 Theorem~\ref{vinhosoon} shows in particular that irreducible representations on atomic measure spaces are monic.} Finally, in Section~\ref{sec:naimark}, we present an application of $\Lambda$-semibranching representations in the context of the Naimark problem for graph algebras.

 	\subsection*{Acknowledgments}   	  
	C.F.~was  partially supported by the Simons Foundation Collaboration Grant for Mathematics \#523991.  E.G.~was partially supported by the National Science Foundation (DMS-1800749). D.G.~was partially supported by Conselho Nacional de Desenvolvimento Cient\'ifico e Tecnol\'ogico (CNPq), and by Capes-PrInt grant number 88881.310538/2018-01 - Brazil.
	C.F.~also thanks the sabbatical program at the University of Colorado/Boulder for support. The authors thank Judith Packer for helpful conversations.
	
	\section{{Preliminary} material}
	\label{sec:background}
	 In this section we  recall the definition of  higher-rank graphs and their $C^*$-algebras from  \cite{KP}, together with results on $\Lambda$-semibranching function systems and their associated representations that extend those established for finite higher-rank graphs  in \cite{FGKP} and \cite{FGJKP-monic}. 
	  
	  \subsection{Definition of higher-rank graphs }
	  
	Let $\N=\{0,1,2,\dots\}$ denote the monoid of natural numbers under addition, and let $k\in \N$ with $k\ge 1$. We write $e_1,\dots, e_k$ for the standard basis vectors of $\N^k$, where $e_i$ is the vector of $\N^k$ with $1$ in the $i$-th position and $0$ everywhere else.
	
	\begin{defn} \cite[Definition 1.1]{KP}
	\label{def-higher-rank-gr}
	A \emph{higher-rank graph} or \emph{$k$-graph} is a countable small category\footnote{Recall that a small category is one  in which  the collection of arrows is a  set.} $\Lambda$ 
	 with a degree functor $d:\Lambda\to \N^k$ satisfying the \emph{factorization property}: for any morphism $\lambda\in\Lambda$ and any $m, n \in \N^k$ such that  $d(\lambda)=m+n \in \N^k$,  there exist unique morphisms $\mu,\nu\in\Lambda$ such that $\lambda=\mu\nu$ and $d(\mu)=m$, $d(\nu)=n$. 
	\end{defn}
	When discussing $k$-graphs, we use the arrows-only picture of category theory; thus, objects in $\Lambda$ are identified with identity morphisms, and the notation $ \lambda \in \Lambda $ means $\lambda$ is a morphism in $\Lambda$.
	\begin{example}
	\label{ex:examples-of-kgraphs}
	{\color{white},}	
	
	\begin{enumerate}	
	\item 
	The higher-rank graphs with $k =1$ (the 1-graphs) correspond to the categories whose objects are the vertices of a directed graph $E$, and whose morphisms are the finite paths in $E$.  In this case,   $d(\lambda) \in \N$ is the number of edges in $\lambda$.
	\item The $k$-graph $\Omega_k$ has $\text{Obj}(\Omega_k) = \N^k$ and $\text{Mor}(\Omega_k) = \{ (m, n) \in \N^k \times \N^k: m \leq n\}$.  We have $d(m, n) = n-m$.
	\end{enumerate}
	\end{example}	
We often regard $k$-graphs as a $k$-dimensional generalization of directed graphs, so we call morphisms $\lambda\in\Lambda$ \emph{paths} in $\Lambda$, and the objects (identity morphisms) are often called \emph{vertices}. For $n\in\N^k$, we write
	\begin{equation}
	\label{eq:Lambda-n}
	\Lambda^n:=\{\lambda\in\Lambda\,:\, d(\lambda)=n\}\
	\end{equation}
	With this notation, note that $\Lambda^0$ is the set of objects (vertices) of $\Lambda$, and we will call elements of $\Lambda^{e_i}$ (for any $i$) \emph{edges}.  
	We write $r,s:\Lambda\to \Lambda^0$ for the range and source maps in $\Lambda$ respectively.  For vertices $v, w \in \Lambda^0$, we define
	\[v\Lambda w:=\{\lambda\in\Lambda\,:\, r(\lambda)=v,\;s(\lambda)=w\} \quad \text{and} \quad  v\Lambda^n:= \{ \lambda \in \Lambda: r(\lambda) = v, \ d(\lambda) = n\}.\]
	
	Our focus in this paper is on  row-finite $k$-graphs with no sources.
		We say that $\Lambda$  \emph{has no sources} or \emph{is source-free} if $v\Lambda^n\ne \emptyset$ for all $v\in\Lambda^0$ and $n\in\N^k$. It is well known that this is equivalent to the condition that $v\Lambda^{e_i}\ne \emptyset$ for all $v\in \Lambda$ and all basis vectors $e_i$ of $\N^k$. 
			A $k$-graph $\Lambda $ is \emph{row-finite} if 
				
				\begin{equation}
				\label{def-row finite}
				\#\Big( v\Lambda^n \Big) < \infty,\ \forall v \in \Lambda^0,\ \forall n \in \N^k.
				\end{equation} 
	
	For $m,n\in\N^k$, we write $m\vee n$ for the coordinatewise maximum of $m$ and $n$. Given  $\lambda,\eta\in \Lambda$, we write
	\begin{equation}\label{eq:lambda_min}
	\Lambda^{\operatorname{min}}(\lambda,\eta):=\{(\alpha,\beta)\in\Lambda\times\Lambda\,:\, \lambda\alpha=\eta\beta,\; d(\lambda\alpha)=d(\lambda)\vee d(\eta)\}.
	\end{equation}
	If $k=1$, then $\Lambda^{\operatorname{min}}(\lambda, \eta)$ will have at most one element; this need not be true  if $k > 1$.
	
	Related to the definition of $\Lambda^{\min}(\lambda,\nu)$ is 
	\[ \text{MCE}(\lambda, \nu) = \{ \xi \in \Lambda: \ \exists (\alpha, \beta) \in \Lambda^{\min}(\lambda, \nu) \text{ such that } \xi = \lambda \alpha = \nu \beta\}.\]
	Observe that if $r(\lambda) \not= r(\nu)$ then $\text{MCE}(\lambda, \nu) = \emptyset =  \Lambda^{\min}(\lambda, \nu)$; if $r(\lambda) = r(\nu)$ then $r(\xi) = r(\lambda)$ for all $\xi \in \text{MCE}(\lambda, \nu)
	$.

	\begin{defn}[\cite{KP} Definitions 2.1]
	\label{def:infinite-path}
	Let $\Lambda$ be a $k$-graph. An \emph{infinite path} in $\Lambda$ is a $k$-graph morphism (degree-preserving functor) $x:\Omega_k\to \Lambda$, and we write $\Lambda^\infty$ for the set of infinite paths in $\Lambda$. 
	Since $\Omega_k$ has a terminal object (namely $0 \in\N^k$) but no initial object, we think of our infinite paths as having a range $r(x) : = x(0)$ but no source.
	For each $m\in \N^k$, we have a shift map $\sigma^m:\Lambda^\infty \to \Lambda^\infty$ given by
	\begin{equation}\label{eq:shift-map}
	\sigma^m(x)(p,q)=x(p+m,q+m)
	\end{equation}
	for $x\in\Lambda^\infty$ and $(p,q)\in\Omega_k$. 

	It is well-known that the collection of cylinder sets 
	\[
	Z(\lambda)=\{x\in\Lambda^\infty\,:\, x(0,d(\lambda))=\lambda\},
	\]
	for $\lambda \in \Lambda$, form a compact open basis for a locally compact Hausdorff topology on $\Lambda^\infty$ if  $\Lambda$ is row-finite: see Section 2 of \cite{KP}. {The cylinder sets also generate the standard Borel structure $\mathcal{B}_o(\Lambda^\infty)$ on $\Lambda^\infty$.
In particular, if we enumerate the vertices, say $\{v_i\}_{i\in \N}$ of $\Lambda,$  recall that the $\sigma$-algebra $\mathcal{A}$ of the disjoint union
$\Lambda^\infty= \bigsqcup_{n \in \N} v_n \Lambda^\infty$ is defined to be
\[
\mathcal{A} = \Big\{  A\subseteq \Lambda^\infty\hbox{ such that } A\cap v_j\Lambda^\infty \hbox{ Borel, }\forall j\in \N\Big\}.
\]}
	
	We also have a partially defined ``prefixing map'' $\sigma_\lambda: Z(s(\lambda)) \to Z(\lambda)$ for each $\lambda \in \Lambda$:
	\[ \sigma_\lambda(x) = \lambda x = \left[ (p, q) \mapsto \begin{cases} \lambda(p, q), & q \leq d(\lambda) \\
	x(p-d(\lambda), q-d(\lambda)), & p \geq d(\lambda) \\
	\lambda (p, d(\lambda)) x(0, q-d(\lambda)), & p < d(\lambda) < q
	\end{cases} \right]
	\]
	\end{defn}

	\begin{defn}
	\label{def:cofinal}
The {\em orbit} of an infinite path $x$ is 
\begin{equation}
\text{Orbit}(x) = \{ y \in \Lambda^\infty: \exists m, n \in \N^k \text{ s.t. } \sigma^n(x) = \sigma^m(y)\} = \{ \lambda\sigma^n(x): n \in \N^k, \lambda \in \Lambda\}.
\label{eq:orbit}
\end{equation}
	We say that a $k$-graph $\Lambda$ is {\em cofinal} if, and only if, for all $x \in \Lambda^\infty$ and $v \in \Lambda^0$, there exists $y\in \text{Orbit}(x)$ with $ r(y) = v$.  
	\end{defn}
	
	Now we introduce the $C^*$-algebra associated to a row-finite, source-free $k$-graph $\Lambda$. 
	
	\begin{defn} (\cite[Definition 1.5]{KP})\label{def:kgraph-algebra}
	Let $\Lambda$ be a row-finite $k$-graph with no sources. A \emph{Cuntz--Krieger $\Lambda$-family} is a collection  $\{t_\lambda:\lambda\in\Lambda\}$ of partial isometries in a $C^*$-algebra satisfying
	\begin{itemize}
	\item[(CK1)] $\{t_v\,:\, v\in\Lambda^0\}$ is a family of mutually orthogonal projections,
	\item[(CK2)] $t_\lambda t_\eta=t_{\lambda\eta}$ if $s(\lambda)=r(\eta)$,
	\item[(CK3)] $t^*_\lambda t_\lambda=t_{s(\lambda)}$ for all $\lambda\in\Lambda$,
	\item[(CK4)] for all $v\in\Lambda$ and $n\in\N^k$, we have $
	t_v=\sum_{\lambda\in v\Lambda^n} t_\lambda t^*_\lambda.$
	\end{itemize}
	The Cuntz--Krieger $C^*$-algebra $C^*(\Lambda)$ associated to $\Lambda$ is the universal $C^*$-algebra generated by a Cuntz--Krieger $\Lambda$-family, in the sense that for any Cuntz--Krieger $\Lambda$-family $\{t_\lambda: \lambda \in \Lambda\}$, there is an onto $*$-homomorphism $C^*(\Lambda) \to C^*(\{t_\lambda: \lambda \in \Lambda\})$.  We will usually write $s_\lambda$ for the generator of $C^*(\Lambda)$ corresponding to $\lambda \in \Lambda$.
	\end{defn}
	
	Since the sum of two projections is a projection iff the summands are mutually orthogonal, (CK4) implies that  $t_\lambda t_\lambda^* \perp t_\eta t_\eta^*$ if $\lambda \not= \eta$.  Also,
	conditions (CK2) - (CK4) implies that for all $\lambda, \eta\in \Lambda$, we have
	\begin{equation}\label{eq:CK4-2}
	t_\lambda^* t_\eta=\sum_{(\alpha,\beta)\in \Lambda^{\operatorname{min}}(\lambda,\eta)} t_\alpha t^*_\beta. 
	\end{equation}
	It follows that $
	C^*(\Lambda)=\overline{\operatorname{span}}\{s_\alpha s^*_\beta\,:\, \alpha,\beta\in\Lambda,\; s(\alpha)=s(\beta)\}.$

	\subsection{$\Lambda$-semibranching function systems, $\Lambda$-projective systems, and   representations}
	
	In \cite{FGKP}, separable representations of $C^*(\Lambda)$ (when $\Lambda$ is finite) were constructed by using $\Lambda$-semibranching function systems on measure spaces.  Intuitively, a $\Lambda$-semibranching function system is a way of encoding the Cuntz-Krieger relations at the measure-space level: the prefixing map $\tau_\lambda$ corresponds to the partial isometry $s_\lambda \in C^*(\Lambda)$.   The construction of a $\Lambda$-semibranching function system  from  \cite[Section 3.1]{FGKP} extends verbatim to the row-finite case; we provide the details below.
	
	\begin{defn} \cite[Definition 2.1]{MP}
\label{def:SBFS}	
	Let $(X, \mu)$ be a measure space, and let $I$ be a finite or countable set of indices.  Suppose that, for each $i\in I$, we have a measurable subset $D_i \subseteq X$, with $0 < \mu(D_i) < \infty$ for all $i$, and a measurable map $\sigma_i: D_i \to X$. The family $\{\sigma_i\}_{i\in I}$ is a {\em semibranching function system} if the following hold.
	\begin{enumerate}
	\item Writing $R_i = \sigma_i(D_i)$, we have 
	\[ \mu(X \backslash \bigcup_i R_i) = 0, \qquad \mu(R_i \cap R_j) = 0 \text{ for } i \not= j, 
\]
and $\mu(R_i) < \infty$ for all $i$.
	\item The  Radon--Nikodym derivative $\Phi_i :=  \frac{d(\mu \circ \sigma_i)}{d\mu}$ is strictly positive $\mu$-a.e.~on $D_i$.
\end{enumerate}
A measurable map $\sigma: X \to X$ is called a {\em coding map} for the family $\{\sigma_i\}_{i\in I}$ if $\sigma \circ \sigma_i = id_{D_i}$ for all $i$.
	\end{defn}
	Since $\mu (R_i) = \mu \circ \sigma_i (D_i) = \int_{D_i} \frac{d(\mu \circ \sigma_i)}{d\mu} d\mu$, and $\mu(D_i)>0$, the hypothesis that the Radon--Nikodym derivative is strictly positive  implies that $0 < \mu(R_i)$ always.

	\begin{defn}\cite[Definition~3.2]{FGKP}
	\label{def-lambda-SBFS-1}
	Let $\Lambda$ be a row-finite source-free $k$-graph and let $(X, \mu)$ be a measure space.  A \emph{$\Lambda$-semibranching function system} on $(X, \mu)$ is a collection $\{D_\lambda\}_{\lambda \in \Lambda}$ of measurable subsets of $X$, together with a family of prefixing maps $\{\tau_\lambda: D_\lambda \to X\}_{\lambda \in \Lambda}$, and a family of coding maps $\{\tau^m: X \to X\}_{m \in \N^k}$, such that
	\begin{itemize}
	\item[(a)] For each $m \in \N^k$, the family $\{\tau_\lambda: d(\lambda) = m\}$ is a semibranching function system, with coding map $\tau^m$.
	\item[(b)] If $ v \in \Lambda^0$, then  $\tau_v = id$.
	\item[(c)] Let $R_\lambda = \tau_\lambda( D_\lambda)$. For each $\lambda \in \Lambda, \nu \in s(\lambda)\Lambda$, we have $R_\nu \subseteq D_\lambda$ (up to a set of measure 0), and
	\[\tau_{\lambda} \tau_\nu = \tau_{\lambda \nu}\text{ a.e.}\]
	 (Note that this implies that up to a set of measure 0, $D_{\lambda \nu} = D_\nu$ whenever $s(\lambda) = r(\nu)$).
	\item[(d)] The coding maps satisfy $\tau^m \circ \tau^n = \tau^{m+n}$ for any $m, n \in \N^k$.  (Note that this implies that the coding maps pairwise commute.)
	\end{itemize}
	\end{defn}

	As established in \cite{FGKP} in the case of finite $k$-graphs, any $\Lambda$-semibranching function system gives rise to a representation of $C^*(\Lambda)${ via \lq prefixing' and \lq chopping off' operators that satisfy the Cuntz--Krieger relations}. 
	For the convenience of the reader, we recall the formula for these $\Lambda$-semibranching representations of $C^*(\Lambda)$.
	The following theorem is an extension  of \cite[Theorem~3.5]{FGKP} to the row-finite case (cf.~also \cite[Theorem~3.5]{goncalves-li-royer-SBFS}); the proof for finite $k$-graphs given in \cite{FGKP} extends verbatim to the row-finite setting.

	\begin{thm}\cite[Theorem~3.5]{FGKP}, \cite[Theorem~3.5]{goncalves-li-royer-SBFS}	\label{thm:SBFS-repn}
	Let $\Lambda$ be a row-finite $k$-graph with no sources and suppose that we have a $\Lambda$-semibranching function system on a  measure space $(X,\mu)$ with prefixing maps $\{\tau_\lambda: \lambda \in \Lambda\}$ and coding maps $\{\tau^m:m\in \N^k\}$. For each $\lambda\in\Lambda$, define an operator $S_\lambda$  on $L^2(X,\mu)$ by
	\[
	S_\lambda\xi(x)=\chi_{R_\lambda}(x)(\Phi_{\lambda}(\tau^{d(\lambda)}(x)))^{-1/2} \xi(\tau^{d(\lambda)}(x)).
	\]
	Then the operators $\{S_\lambda:\lambda\in\Lambda\}$ form a Cuntz--Krieger $\Lambda$-family and hence generate a representation $\pi$ of $C^*(\Lambda)$ on $L^2(X, \mu)$.
	\end{thm}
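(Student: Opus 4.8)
The plan is to verify conditions (CK1)--(CK4) directly for the operators $\{S_\lambda\}$, since establishing the Cuntz--Krieger relations is exactly what yields a representation of the universal $C^*$-algebra $C^*(\Lambda)$.  First I would record the basic building blocks.  For (CK1), I would compute $S_v$ for a vertex $v$: since $\tau_v = \id$ and $\Phi_v \equiv 1$ by Definition~\ref{def-lambda-SBFS-1}(b), the formula gives $S_v\xi = \chi_{R_v}\xi$, a multiplication operator by the characteristic function of $R_v = \tau_v(D_v) = D_v$.  Thus each $S_v$ is a projection, and mutual orthogonality will follow from the fact that $\{\tau_v\}_{v \in \Lambda^0}$ is a semibranching function system with $m = 0$, so the ranges $R_v$ satisfy $\mu(R_v \cap R_w) = 0$ for $v \neq w$ by Definition~\ref{def:SBFS}(1).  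The main computational engine for the remaining relations is the formula for the adjoint: a direct change-of-variables argument, using the Radon--Nikodym derivative $\Phi_\lambda$ and the defining relation $\tau^{d(\lambda)} \circ \tau_\lambda = \id_{D_\lambda}$, shows
\[
S_\lambda^*\xi(x) = \chi_{D_\lambda}(x)\,\Phi_\lambda(x)^{1/2}\,\xi(\tau_\lambda(x)).
\]
I would verify this by checking $\langle S_\lambda \xi, \eta\rangle = \langle \xi, S_\lambda^*\eta\rangle$ via the substitution $x = \tau_\lambda(y)$, whose Jacobian is exactly $\Phi_\lambda$.

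With the adjoint formula in hand, the remaining relations become change-of-variables computations.  For (CK3), I would compute $S_\lambda^* S_\lambda \xi(x)$ by composing the two formulas; the $\chi_{R_\lambda}\circ\tau_\lambda$ and the product of Radon--Nikodym factors collapse (using $\tau^{d(\lambda)}\circ\tau_\lambda = \id_{D_\lambda}$ and the chain rule for $\Phi$), leaving multiplication by $\chi_{D_\lambda}$, which equals $S_{s(\lambda)}$ since $D_\lambda = D_{s(\lambda)} = R_{s(\lambda)}$ up to measure zero by Definition~\ref{def-lambda-SBFS-1}(c).  For (CK2), I would compute $S_\lambda S_\eta$ when $s(\lambda) = r(\eta)$ directly from the prefixing formula; the multiplicativity $\tau_\lambda \tau_\eta = \tau_{\lambda\eta}$ from Definition~\ref{def-lambda-SBFS-1}(c), together with the cocycle relation $\tau^{d(\lambda)}\circ\tau^{d(\eta)} = \tau^{d(\lambda)+d(\eta)} = \tau^{d(\lambda\eta)}$ from Definition~\ref{def-lambda-SBFS-1}(d), should make the two sides agree, provided the Radon--Nikodym factors multiply correctly under the chain rule $\Phi_{\lambda\nu} = (\Phi_\lambda \circ \tau^{d(\nu)})\,\Phi_\nu$, which I would establish as a preliminary lemma.

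For (CK4), fix $v \in \Lambda^0$ and $n \in \N^k$ and consider $\sum_{\lambda \in v\Lambda^n} S_\lambda S_\lambda^*$.  Using the two formulas, $S_\lambda S_\lambda^*$ is multiplication by $\chi_{R_\lambda}$, since the composed Radon--Nikodym factors again cancel.  The sum $\sum_{\lambda \in v\Lambda^n}\chi_{R_\lambda}$ should equal $\chi_{R_v} = \chi_{D_v}$: because $\{\tau_\lambda : d(\lambda)=n\}$ is a semibranching function system, the ranges $\{R_\lambda : d(\lambda)=n\}$ are essentially disjoint and cover $X$ up to null sets by Definition~\ref{def:SBFS}(1), and restricting to those $\lambda$ with $r(\lambda) = v$ should carve out exactly $R_v$ up to measure zero.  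Row-finiteness guarantees the sum is finite.  This last identity --- that the ranges $R_\lambda$ over $\lambda \in v\Lambda^n$ partition $R_v$ up to null sets --- is the step I expect to be the main obstacle, since it requires combining the semibranching covering/disjointness axioms with the range-containment relations $R_\nu \subseteq D_\lambda$ of Definition~\ref{def-lambda-SBFS-1}(c) and tracking all identities only up to $\mu$-null sets.  Once all four relations hold $\mu$-almost everywhere, the operators $\{S_\lambda\}$ form a genuine Cuntz--Krieger $\Lambda$-family, and the universal property in Definition~\ref{def:kgraph-algebra} furnishes the desired representation $\pi$ of $C^*(\Lambda)$ on $L^2(X,\mu)$.
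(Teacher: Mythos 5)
Your overall route is the same as the one the paper relies on: the paper gives no independent argument, but states that the proof of \cite[Theorem~3.5]{FGKP} --- which is precisely this direct verification of (CK1)--(CK4) via the adjoint formula and change of variables --- extends verbatim to the row-finite case. Your adjoint formula is correct, and so are your computations for (CK1), (CK3), and (CK4), including the key partition argument that $\sum_{\lambda\in v\Lambda^n}\chi_{R_\lambda}=\chi_{D_v}$ $\mu$-a.e.\ (combine the essential disjointness of $\{R_\lambda: d(\lambda)=n\}$ with $R_\lambda\subseteq D_{r(\lambda)}$ and the essential disjointness of the sets $D_v$; row-finiteness makes the sum finite).

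There is, however, one error you would hit when proving your preliminary lemma: the chain rule $\Phi_{\lambda\nu}=(\Phi_\lambda\circ\tau^{d(\nu)})\,\Phi_\nu$ is false as stated, because it composes $\Phi_\lambda$ with the \emph{coding} map rather than the \emph{prefixing} map. All three functions must live on $D_{\lambda\nu}=D_\nu$; but for $z\in D_\nu$ the point $\tau^{d(\nu)}(z)$ is the point $w$ with $z=\tau_\eta(w)$ for whichever $\eta\in s(\nu)\Lambda^{d(\nu)}$ satisfies $z\in R_\eta$, and $w$ lies in $D_{s(\eta)}$, which in general is not $D_{s(\lambda)}$, the set on which $\Phi_\lambda$ is defined and strictly positive. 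The correct statement is
\[
\Phi_{\lambda\nu}=(\Phi_\lambda\circ\tau_\nu)\,\Phi_\nu\qquad\mu\text{-a.e.\ on }D_\nu,
\]
which makes sense because $\tau_\nu(D_\nu)=R_\nu\subseteq D_\lambda$ by Definition~\ref{def-lambda-SBFS-1}(c), and which follows from $(\mu\circ\tau_{\lambda\nu})(E)=\mu\bigl(\tau_\lambda(\tau_\nu(E))\bigr)=\int_{\tau_\nu(E)}\Phi_\lambda\,d\mu=\int_E(\Phi_\lambda\circ\tau_\nu)\,\Phi_\nu\,d\mu$ for measurable $E\subseteq D_\nu$. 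This corrected form is exactly what (CK2) needs: for a.e.\ $x\in R_{\lambda\nu}$ one has $\tau^{d(\lambda)}(x)=\tau_\nu\bigl(\tau^{d(\lambda\nu)}(x)\bigr)$, so $\Phi_\lambda(\tau^{d(\lambda)}(x))\,\Phi_\nu(\tau^{d(\lambda\nu)}(x))=\Phi_{\lambda\nu}(\tau^{d(\lambda\nu)}(x))$, and $S_\lambda S_\nu=S_{\lambda\nu}$ follows. With this one repair, your proof is complete and matches the cited argument.
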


We now recall  the  definition of a $\Lambda$-projective system from \cite{FGJKP-monic}.
Roughly speaking, a $\Lambda$-projective system on $(X,\mu)$ consists of a $\Lambda$-semibranching function system plus some extra information (encoded in the functions $f_\lambda$ below).

\begin{defn}\label{def:lambda-proj-system}
Let $\Lambda$ be a row-finite $k$-graph with no sources. 
A \emph{$\Lambda$-projective system} on a measure space $(X,\mu)$ is a $\Lambda$-semibranching function system on $(X,\mu)$, with prefixing maps $\{\tau_\lambda:D_\lambda\to R_\lambda\}_{\lambda\in\Lambda}$ and coding maps $\{\tau^n:n\in\N^k\}$
together with a family of functions $\{ f_\lambda\}_{\lambda\in \Lambda} \subseteq  L^2(X,\mu)$ satisfying the following conditions:
\begin{itemize}
\item[(a)] For any   $\lambda \in \Lambda$,   we have  $ 0\not=\frac{d(\mu \circ (\tau_\lambda)^{-1})}{d\mu} = |f_\lambda |^2$; 
\item[(b)] For any $\lambda, \nu \in \Lambda$, we have 
$f_\lambda \cdot (f_\nu \circ \tau^{d(\lambda)}) = f_{\lambda \nu}.$
\end{itemize}
\end{defn}

Recall from \cite[Remark 3.3]{FGJKP-monic} that the functions $f_\lambda$ vanish outside $R_\lambda$, because the same is true for the Radon--Nikodym derivative $\frac{d(\mu \circ (\tau_\lambda)^{-1})}{d\mu}$.
 
 Condition (b) of Definition~\ref{def:lambda-proj-system} is necessary in order to associate a Cuntz--Krieger $\Lambda$-family
 to a $\Lambda$-projective system.  To be precise, we have the following Proposition, which was established for finite $k$-graphs in \cite[Proposition 3.4]{FGJKP-monic}.

  \begin{prop}
  \label{prop:lambda-proj-repn}
  Let $\Lambda$ be a row-finite, source-free $k$-graph. Suppose that a measure space $(X,\mu)$ admits a $\Lambda$-semibranching function system with prefixing maps $\{\tau_\lambda:\lambda\in\Lambda\}$ and coding maps $\{\tau^n:n\in\N^k\}$.  Suppose that $\{f_\lambda\}_{\lambda\in \Lambda}$ is a collection of functions satisfying Condition (a) of Definition \ref{def:lambda-proj-system}. Then the maps $\{\tau_\lambda\}$, $\{\tau^n\}$ and $\{f_\lambda\}_\lambda$ form a $\Lambda$-projective system on $(X, \mu)$ if and only if the operators $T_\lambda \in B(L^2(X, \mu))$ given by 
  \begin{equation}\label{eq:T-lambda}
  T_\lambda(f) = f_\lambda \cdot (f \circ \tau^{d(\lambda)})
  \end{equation}
 form a Cuntz--Krieger $\Lambda$-family with each $T_\lambda$ nonzero (and hence give a representation of $C^*(\Lambda)$).
  \end{prop}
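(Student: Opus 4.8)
The plan is to verify the Cuntz--Krieger relations (CK1)--(CK4) for the operators $T_\lambda$ directly, relying on Definition~\ref{def:lambda-proj-system}(a)--(b) together with the semibranching structure. First I would establish the forward direction: assuming $\{\tau_\lambda\}, \{\tau^n\}, \{f_\lambda\}$ form a $\Lambda$-projective system, I show the $T_\lambda$ satisfy the relations. The computation of $T_\lambda^* $ is the natural starting point: using the change-of-variables formula governed by the Radon--Nikodym derivative $\frac{d(\mu\circ(\tau_\lambda)^{-1})}{d\mu} = |f_\lambda|^2$ from Condition (a), one computes the adjoint $T_\lambda^*$ explicitly as $T_\lambda^*(g)(x) = \overline{f_\lambda(\tau_\lambda(x))}\, g(\tau_\lambda(x))$ (up to the appropriate support and a.e.\ qualifications), since $\tau^{d(\lambda)}\circ \tau_\lambda = \mathrm{id}_{D_\lambda}$ is exactly the coding-map relation from Definition~\ref{def:SBFS}. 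With $T_\lambda^*$ in hand, the relations fall out: (CK1) follows because for $v \in \Lambda^0$ we have $\tau_v = \mathrm{id}$ and $f_v$ is a nonzero characteristic-type function on $R_v$, giving mutually orthogonal projections from the disjointness $\mu(R_v \cap R_w) = 0$; (CK2) is precisely the multiplicativity relation (b), $f_\lambda\cdot(f_\nu\circ\tau^{d(\lambda)}) = f_{\lambda\nu}$, combined with $\tau^{d(\lambda)}\circ\tau^{d(\nu)} = \tau^{d(\lambda\nu)}$ from Definition~\ref{def:lambda-proj-system}'s underlying coding maps; (CK3) reduces to computing $T_\lambda^* T_\lambda$ and recognizing the outcome as multiplication by $\chi_{D_\lambda}$-type data matching $T_{s(\lambda)}$, using that $|f_\lambda|^2$ is the relevant Radon--Nikodym derivative.

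The most delicate relation is (CK4), $T_v = \sum_{\lambda \in v\Lambda^n} T_\lambda T_\lambda^*$. Here I would compute $T_\lambda T_\lambda^*$ as multiplication by a function supported on $R_\lambda$, and then invoke the semibranching axioms from Definition~\ref{def:SBFS}(1): for fixed $n$, the ranges $\{R_\lambda : d(\lambda) = n\}$ cover $X$ up to measure zero and overlap only on null sets, so the sum $\sum_{\lambda\in v\Lambda^n} \chi_{R_\lambda}$ reconstructs the characteristic function of $\bigcup_{\lambda \in v\Lambda^n} R_\lambda = R_v$ up to null sets. I expect this to be the main obstacle, for two related reasons. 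First, row-finiteness (rather than finiteness of $\Lambda$) means the sum over $v\Lambda^n$ is finite for each fixed $v$ and $n$, which keeps convergence unproblematic, but one must check the Radon--Nikodym factors multiply and cancel correctly so that each $T_\lambda T_\lambda^*$ is genuinely a projection onto the correct subspace. Second, the a.e.\ qualifications throughout must be handled carefully: the overlaps $R_\lambda \cap R_\nu$ are null but not empty, so the orthogonality of the projections $T_\lambda T_\lambda^*$ holds only $\mu$-a.e., and one must confirm this suffices for the operator identities in $B(L^2(X,\mu))$.

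For the converse direction, I would assume the $T_\lambda$ form a Cuntz--Krieger family and deduce Condition (b) of Definition~\ref{def:lambda-proj-system} (Condition (a) being given by hypothesis). The key observation is that (CK2), $T_\lambda T_\eta = T_{\lambda\eta}$ when $s(\lambda) = r(\eta)$, unwinds via the formula \eqref{eq:T-lambda} to the identity $f_\lambda \cdot (f_\eta \circ \tau^{d(\lambda)}) \cdot (f \circ \tau^{d(\lambda\eta)}) = f_{\lambda\eta}\cdot(f\circ\tau^{d(\lambda\eta)})$ for all $f \in L^2(X,\mu)$, where I have used $\tau^{d(\lambda)}\circ\tau^{d(\eta)} = \tau^{d(\lambda)+d(\eta)} = \tau^{d(\lambda\eta)}$ from Definition~\ref{def-lambda-SBFS-1}(d). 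Choosing $f$ to range over a separating family (for instance characteristic functions of suitable cylinder-type sets, or simply invoking that the equality of multiplication-composition operators forces equality of the multiplier functions a.e.) yields exactly (b). The nonvanishing of each $T_\lambda$ corresponds to $f_\lambda \neq 0$, which is already built into Condition (a). Finally, the parenthetical ``and hence give a representation of $C^*(\Lambda)$'' is immediate from the universal property in Definition~\ref{def:kgraph-algebra} once the Cuntz--Krieger relations are verified, so no separate argument is needed there.
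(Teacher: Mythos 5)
Your overall strategy is the right one: the paper itself disposes of this proposition in a single line, asserting that the proof of \cite[Proposition 3.4]{FGJKP-monic} for finite $k$-graphs holds verbatim in the row-finite case, and that proof is precisely the direct verification of (CK1)--(CK4) that you outline (row-finiteness entering only to keep the sums in (CK4) finite). Your converse direction --- extracting Condition (b) of Definition \ref{def:lambda-proj-system} from (CK2) by applying both sides to a separating family such as $\chi_{D_{s(\lambda\eta)}}$ --- is also sound.

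However, your formula for the adjoint is wrong, and since (CK3) and (CK4) are computed from it, the verification as written fails. The change of variables $x = \tau_\lambda(y)$ converts $d\mu(x)$ on $R_\lambda$ into $\Phi_\lambda(y)\,d\mu(y)$ on $D_\lambda$, where $\Phi_\lambda = \frac{d(\mu\circ\tau_\lambda)}{d\mu}$, so
\[
\langle T_\lambda f, g\rangle = \int_{D_\lambda} f_\lambda(\tau_\lambda(y))\, f(y)\, \overline{g(\tau_\lambda(y))}\, \Phi_\lambda(y)\, d\mu(y),
\qquad \text{whence} \qquad
T_\lambda^* g = \chi_{D_\lambda}\cdot \overline{\left(f_\lambda\circ\tau_\lambda\right)}\cdot \Phi_\lambda\cdot (g\circ\tau_\lambda).
\]
Condition (a) forces $|f_\lambda\circ\tau_\lambda|^2\,\Phi_\lambda = 1$ a.e.\ on $D_\lambda$, so the correct adjoint is
\[
T_\lambda^* g = \chi_{D_\lambda}\,\frac{g\circ\tau_\lambda}{f_\lambda\circ\tau_\lambda},
\]
exactly as recorded in Remark \ref{rmk:Lambda-proj-is-mult}. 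Your formula $T_\lambda^* g = \overline{\left(f_\lambda\circ\tau_\lambda\right)}\cdot(g\circ\tau_\lambda)$ differs from this by the factor $|f_\lambda\circ\tau_\lambda|^2 = \Phi_\lambda^{-1}$, which is not a support or a.e.\ qualification. With your formula one computes $T_\lambda^* T_\lambda = M_{\Phi_\lambda^{-1}\chi_{D_\lambda}}$ and $T_\lambda T_\lambda^* = M_{|f_\lambda|^2}$, neither of which is a projection unless the Radon--Nikodym derivative is identically $1$, so both (CK3) and (CK4) break. A concrete test: take $\Lambda$ the $1$-graph with one vertex and two edges, $X = [0,1]$ with Lebesgue measure, $\tau_i(x) = (x+i)/2$ and $f_i = \sqrt{2}\,\chi_{[i/2,(i+1)/2]}$ for $i=0,1$; the true adjoint is $T_i^* g = \tfrac{1}{\sqrt{2}}\, g\circ\tau_i$, whereas your formula produces $\sqrt{2}\, g\circ\tau_i$, giving $T_i^* T_i = 2\cdot\operatorname{id}$ instead of the required $T_v = \operatorname{id}$. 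Once the adjoint is corrected, the rest of your outline --- the cancellation giving $T_\lambda T_\lambda^* = M_{\chi_{R_\lambda}}$, the a.e.-disjoint covering $\bigsqcup_{\lambda\in v\Lambda^n} R_\lambda = D_v$ for (CK4), and the converse --- goes through.
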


  	\begin{proof} The proof given in \cite[Proposition 3.4]{FGJKP-monic} for finite higher-rank graphs holds verbatim for row-finite $k$-graphs.
 \end{proof}
 
We call the representation given in Equation \eqref{eq:T-lambda} a {\em $\Lambda$-projective representation}.
 
 \begin{example} 
 For any $\Lambda$-semibranching function system on $(X, \mu)$, there is a natural choice of an associated $\Lambda$-projective system; namely, for $\lambda \in \Lambda^{n}$ we define 
 \begin{equation}
 \label{eq:std-f-lambda}
 f_\lambda(x) := \Phi_{\lambda}(\tau^{n}(x))^{-1/2}  \chi_{R_\lambda}(x).\end{equation}
Condition (a) is satisfied because of the hypothesis that the Radon--Nikodym derivatives be strictly positive $\mu$-a.e.~on their domain of definition.
\label{ex:standard-SBFS-is-proj}
Since the operators $S_\lambda \in B(L^2(X, \mu))$ of Theorem \ref{thm:SBFS-repn} are given by 
\[S_\lambda(f) = f_\lambda \cdot (f \circ \tau^n),\]
and Theorem \ref{thm:SBFS-repn} establishes that $\{S_\lambda\}_{\lambda\in \Lambda}$ is a Cuntz--Krieger $\Lambda$-family, Proposition \ref{prop:lambda-proj-repn}  shows that Equation \eqref{eq:std-f-lambda}  indeed describes a $\Lambda$-projective system. 
\end{example}
 
 	\begin{rmk}
 	\label{rmk:Lambda-proj-is-mult}
If $\{T_\lambda\}_{\lambda \in \Lambda}$ is a $\Lambda$-projective representation, then one computes that
\[ T_\lambda^* f = \frac{\chi_{D_\lambda} \cdot ( f \circ \tau_\lambda)}{f_\lambda \circ \tau_\lambda}.\]
It now follows, using the fact that $\tau_\lambda \circ \tau^{d(\lambda)}|_{R_\lambda} = id$, that
 	\begin{equation}
 \label{eq:lambda-proj-is-mult}
 T_\lambda T_\lambda^* = M_{\chi_{R_\lambda}}.\end{equation}
Moreover, Example \ref{ex:standard-SBFS-is-proj} tells us that Equation \eqref{eq:lambda-proj-is-mult} also holds for any $\Lambda$-semibranching representation.
 	\end{rmk}
 	
 	The following lemma  will be used in Proposition~\ref{prop:restriction-gives-SBFS} below, as well as later in Lemma \ref{lem:min-invar-implies}.
 	
 	\begin{lemma}
 	\label{lem:measure-zero-preserved}
 	Let $\{ \tau^n, \tau_\lambda\}_{\lambda, n}$ be a $\Lambda$-semibranching function system on $(X, \mu)$.  If $\mu(B) = 0$ then $\mu(\tau^n(B)) = 0$ for any $n \in \N^k$.
 	\end{lemma}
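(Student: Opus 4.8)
The plan is to localize the problem to the ranges of the degree-$n$ prefixing maps, where the coding map $\tau^n$ becomes an honest inverse, and then to exploit the strict positivity of the Radon--Nikodym derivatives. I would fix $n\in\N^k$ and recall from condition (a) of Definition~\ref{def-lambda-SBFS-1} that $\{\tau_\lambda : d(\lambda)=n\}$ is a semibranching function system with coding map $\tau^n$. By Definition~\ref{def:SBFS}, the ranges $R_\lambda=\tau_\lambda(D_\lambda)$ (for $d(\lambda)=n$) are pairwise $\mu$-null-disjoint and satisfy $\mu\big(X\setminus\bigcup_{d(\lambda)=n}R_\lambda\big)=0$. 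Writing $N$ for this null complement, I would decompose $B=\big(\bigsqcup_{d(\lambda)=n}(B\cap R_\lambda)\big)\sqcup(B\cap N)$, so that $\tau^n(B)=\bigcup_{d(\lambda)=n}\tau^n(B\cap R_\lambda)\cup\tau^n(B\cap N)$; since $\Lambda$ is countable, it suffices to control each piece separately.

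The crux is the behaviour on a single $R_\lambda$. Since $\tau^n\circ\tau_\lambda=\id_{D_\lambda}$, the map $\tau_\lambda$ is injective onto $R_\lambda$, so it is a bijection $D_\lambda\to R_\lambda$ whose inverse is exactly $\tau^n|_{R_\lambda}$. Thus $E_\lambda:=\tau^n(B\cap R_\lambda)=\tau_\lambda^{-1}(B\cap R_\lambda)\subseteq D_\lambda$ and $\tau_\lambda(E_\lambda)=B\cap R_\lambda$. Using that $\mu\circ\tau_\lambda$ has density $\Phi_\lambda$ against $\mu$ (as in the computation following Definition~\ref{def:SBFS}), I would write
\[ 0=\mu(B\cap R_\lambda)=\mu\big(\tau_\lambda(E_\lambda)\big)=(\mu\circ\tau_\lambda)(E_\lambda)=\int_{E_\lambda}\Phi_\lambda\,d\mu. \]
Because $\Phi_\lambda>0$ $\mu$-a.e.\ on $D_\lambda\supseteq E_\lambda$, the vanishing of this integral forces $\mu(E_\lambda)=\mu(\tau^n(B\cap R_\lambda))=0$. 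Taking the countable union over $\{\lambda:d(\lambda)=n\}$ then gives $\mu\big(\tau^n(B\cap\bigcup_\lambda R_\lambda)\big)=0$.

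The remaining, and only delicate, piece is $\tau^n(B\cap N)$, where $\tau^n$ is a priori uncontrolled on the null set $N$. The key observation is that the semibranching axioms, and the operators of Theorem~\ref{thm:SBFS-repn}, only constrain and only depend on $\tau^n$ over the co-null set $\bigcup_\lambda R_\lambda$; the values of $\tau^n$ on $N$ are immaterial and may be redefined on this null set without altering the $\Lambda$-semibranching function system. Choosing such a representative (for instance one sending $N$ into a fixed $R_\lambda$, so that $\mu(\tau^n(N))=0$) yields $\mu(\tau^n(B\cap N))\le\mu(\tau^n(N))=0$, and combining the two paragraphs gives $\mu(\tau^n(B))=0$.

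I expect this last null-complement bookkeeping to be the main obstacle to a fully rigorous write-up, since for a genuinely arbitrary measurable map a null set may have non-null image (as for the Cantor function); the point that makes the lemma correct is precisely that the coding map $\tau^n$ is only determined up to $\mu$-null sets. The strict-positivity step in the middle paragraph is the substantive mechanism, but it is short and standard once the decomposition is in place.
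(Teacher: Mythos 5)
Your proof is correct, and its core is exactly the paper's argument: the paper likewise reduces to the ranges of the degree-$n$ prefixing maps, asserting $\tau^n(B) =_{a.e.} \bigsqcup_{d(\lambda)=n}\tau_\lambda^{-1}(B)$, and then computes
$\int_{\tau_\lambda^{-1}(B)} \Phi_\lambda\, d\mu = (\mu\circ\tau_\lambda)\bigl(\tau_\lambda^{-1}(B)\bigr) \le \mu(B)=0$,
so that strict positivity of $\Phi_\lambda$ forces $\mu(\tau_\lambda^{-1}(B))=0$ for each $\lambda\in\Lambda^n$, and countability of $\Lambda$ finishes the proof. Your middle paragraph is this computation verbatim (note $E_\lambda=\tau_\lambda^{-1}(B\cap R_\lambda)=\tau_\lambda^{-1}(B)$, since $\tau_\lambda$ takes values in $R_\lambda$).

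The only divergence is your last paragraph, and there you are in fact more careful than the printed proof: the paper never mentions the null complement $N=X\setminus\bigcup_{d(\lambda)=n}R_\lambda$, and its displayed a.e.\ equality silently presumes $\mu(\tau^n(B\cap N))=0$, which, as you note, is not automatic for an arbitrary measurable map. So you have isolated a genuine (if benign) gap in the paper's own write-up, and your resolution --- that the coding maps, like everything else in this framework, are only determined up to null sets --- is the right one. Two caveats on how you execute it: (i) redefining $\tau^n$ on $N$ leaves condition (a) of Definition~\ref{def-lambda-SBFS-1} untouched, but it can destroy the \emph{exact} identity $\tau^m\circ\tau^n=\tau^{m+n}$ of condition (d) at points of $N$, so the redefinition is innocuous only when the axioms (and hence the lemma) are read modulo null sets, which is the paper's standing convention; (ii) sending $N$ into a fixed $R_\lambda$ does not by itself give $\mu(\tau^n(N))=0$, because a single point of $R_\lambda$ may be an atom --- one should instead send $N$ into a null set, for instance collapse it to a point of $N$ itself.
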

 	\begin{proof}
 Observe that $\tau^n(B) =_{a.e.} \bigsqcup_{d(\lambda) = n} (\tau_\lambda)^{-1}(B)$. Moreover, if $\mu(B) = 0$,
\[ \int_{(\tau_\lambda)^{-1}(B)} \frac{d(\mu \circ \tau_\lambda)}{d\mu} \, d\mu = (\mu \circ \tau_\lambda)(\tau_\lambda^{-1}(B))  = \mu(B) = 0.\]
However, the definition of a $\Lambda$-semibranching function system requires   $\Phi_\lambda = \frac{d(\mu \circ \tau_\lambda)}{d\mu}  > 0$ a.e.~on $D_{s(\lambda)}$.  In other words, $\mu(\tau_\lambda^{-1}(B)) = 0$ for all $\lambda \in \Lambda^n$, so $\mu(\tau^n(B)) = 0$.
 	\end{proof}

Proposition \ref{prop:restriction-gives-SBFS}  below shows that restricting a $\Lambda$-projective system to a subspace $(A, \mu)$ of $(X, \mu)$ will still give a $\lambda$-projective system, as long as the subspace is {invariant}.

\begin{defn} 
\label{def:invariant-set}
Let $(X, \mu)$ be a measure space, and $T: X \to X$ a function. We say that $B\subset X$ is {\em invariant} with respect to $T$ if $\mu(T^{-1}(B)\Delta B)=0$. 
\end{defn}

      Given a measurable subset $A \in \Sigma$ of a measure space $(X, \Sigma, \mu)$, we write $\mu_A := \mu(\, \cdot \cap A)$ for the measure given by restriction to $A$. We take the $\sigma$-algebra of $\mu_A$-measurable sets to be $\{ B \cap A: B \in \Sigma\}$.

\begin{prop}
Suppose there is a $\Lambda$-semibranching function system $\{ \tau_\lambda, \tau^n\}$ on $(X, \mu)$.
If $A\subseteq X$  is invariant with respect to $\tau^n$ for all $n$, and
 $\mu(A \cap D_v)$ is nonzero for all $v$, then the restriction of a $\Lambda$-projective system on $(X, \mu)$  to $(X, \mu_A)$ is again   a $\Lambda$-projective system.
\label{prop:restriction-gives-SBFS}
\end{prop}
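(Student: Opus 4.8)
The goal is to verify that the restriction to $(X,\mu_A)$ of a $\Lambda$-projective system on $(X,\mu)$ satisfies all the defining conditions of Definition~\ref{def:lambda-proj-system}; equivalently, by Proposition~\ref{prop:lambda-proj-repn}, that the restricted prefixing maps $\{\tau_\lambda\}$, coding maps $\{\tau^n\}$, and functions $\{f_\lambda\}$ still define a valid structure on $(X,\mu_A)$. Since a $\Lambda$-projective system is a $\Lambda$-semibranching function system plus functions $f_\lambda$, the plan is to first check that the $\Lambda$-semibranching function system structure restricts well (Definition~\ref{def:SBFS} and Definition~\ref{def-lambda-SBFS-1}), and then verify that conditions (a) and (b) of Definition~\ref{def:lambda-proj-system} carry over, being careful throughout to track which maps genuinely send $A$ into $A$ up to $\mu$-null sets.

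The first step is to pin down how the invariance hypothesis $\mu((\tau^n)^{-1}(A)\,\Delta\,A)=0$ interacts with the prefixing maps. The key observation is that for $\lambda\in\Lambda^n$ the prefixing map $\tau_\lambda$ is a section of $\tau^n$, i.e. $\tau^n\circ\tau_\lambda=\mathrm{id}_{D_\lambda}$ and $\tau_\lambda\circ\tau^n|_{R_\lambda}=\mathrm{id}$ (Remark~\ref{rmk:Lambda-proj-is-mult}). From these identities I would deduce that, up to $\mu$-null sets, $\tau_\lambda$ carries $D_\lambda\cap A$ into $R_\lambda\cap A$ and that $\tau^n$ carries $A$ into $A$: concretely, $A\cap R_\lambda =_{a.e.} \tau_\lambda((\tau^n)^{-1}(A)\cap D_\lambda) =_{a.e.}\tau_\lambda(A\cap D_\lambda)$, where the last step uses invariance together with Lemma~\ref{lem:measure-zero-preserved} to discard the symmetric-difference null set (note that $\tau_\lambda$ preserves null sets since its Radon–Nikodym derivative $\Phi_\lambda$ is finite a.e., and $\tau^n$ preserves null sets by Lemma~\ref{lem:measure-zero-preserved}). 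This shows the restricted maps are well-defined as maps on the restricted $\sigma$-algebra $\{B\cap A : B\in\Sigma\}$ and that the range/domain sets behave correctly.

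With invariance in hand, the remaining conditions are largely bookkeeping. For the semibranching axioms (Definition~\ref{def:SBFS}), the covering and disjointness conditions $\mu(X\setminus\bigcup_i R_i)=0$ and $\mu(R_i\cap R_j)=0$ restrict immediately to $\mu_A$ since $\mu_A\le\mu$; the hypothesis $\mu(A\cap D_v)\neq 0$ is exactly what guarantees $\mu_A(D_\lambda)=\mu(A\cap D_\lambda)>0$ for every $\lambda$ (using $D_\lambda=_{a.e.}D_{s(\lambda)}$ from Definition~\ref{def-lambda-SBFS-1}(c) and invariance to pass $A\cap D_\lambda$ down to the source vertex), so that domains remain non-null. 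The Radon–Nikodym derivative of $\tau_\lambda$ with respect to $\mu_A$ agrees a.e.\ on $A$ with $\Phi_\lambda$ by the invariance identity $A\cap R_\lambda=_{a.e.}\tau_\lambda(A\cap D_\lambda)$, so strict positivity persists, and conditions (c), (d) of Definition~\ref{def-lambda-SBFS-1} and the algebraic identities $\tau^m\circ\tau^n=\tau^{m+n}$, $\tau_\lambda\tau_\nu=\tau_{\lambda\nu}$ hold a.e.\ with respect to $\mu$ and hence a.e.\ with respect to the smaller measure $\mu_A$. Finally, for Definition~\ref{def:lambda-proj-system}, condition (b) is a pointwise functional identity $f_\lambda\cdot(f_\nu\circ\tau^{d(\lambda)})=f_{\lambda\nu}$ that holds a.e.-$\mu$ and so a fortiori a.e.-$\mu_A$, while condition (a) requires the restricted Radon–Nikodym derivative to equal $|f_\lambda|^2$ and to be nonzero; this again follows from the $\mu$-level identity combined with the derivative computation above, using that $f_\lambda$ vanishes off $R_\lambda$ and that $A\cap R_\lambda$ is non-null (which is where $\mu(A\cap D_v)\neq 0$ is essential).

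I expect the main obstacle to be the first step: rigorously converting the invariance hypothesis, stated for the coding maps $\tau^n$, into the a.e.\ statement that the prefixing maps $\tau_\lambda$ respect $A$, i.e. $\tau_\lambda(A\cap D_\lambda)=_{a.e.}A\cap R_\lambda$. The difficulty is that invariance is phrased in terms of preimages under $\tau^n$ while the prefixing maps run the other direction, so one must use the section relation $\tau^n\circ\tau_\lambda=\mathrm{id}$ carefully and repeatedly invoke Lemma~\ref{lem:measure-zero-preserved} (and finiteness of $\Phi_\lambda$) to ensure that the various symmetric-difference null sets are not enlarged to positive measure when pushed forward or pulled back. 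Once this single a.e.-identity is established, every other verification reduces to the trivial observation that an identity holding $\mu$-almost everywhere holds $\mu_A$-almost everywhere.
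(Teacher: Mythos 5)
Your proposal is correct and takes essentially the same route as the paper: both arguments hinge on converting the coding-map invariance of $A$ into the a.e.\ statement that $\tau_\lambda$ carries $A\cap D_{s(\lambda)}$ onto $A\cap R_\lambda$, then use this to identify the restricted Radon--Nikodym derivatives (both $d(\mu_A\circ\tau_\lambda)/d\mu_A$ and the pushforward $d(\mu_A\circ\tau_\lambda^{-1})/d\mu_A$ needed for condition (a) of Definition~\ref{def:lambda-proj-system}) with the original ones, after which all remaining conditions pass to $\mu_A$ since $\mu_A\le\mu$. The paper merely organizes this as two parallel symmetric-difference computations (one for each derivative, the second invoking Lemma~\ref{lem:measure-zero-preserved}) instead of funneling both through your single identity, but the mathematical content is the same.
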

\begin{proof}
We first check that if $\{ \tau_\lambda\}_{\lambda \in \Lambda}$ is a $\Lambda$-semibranching function system on $(X, \mu)$, then $\{ \tau_\lambda\}_\lambda$ also gives a $\Lambda$-semibranching function system  on $(X, \mu_A)$.  By hypothesis we have $\mu_A(D_v) > 0$ for all $v \in \Lambda^0$, and for any $n\in \N^k$, 
\[ \mu_A\left( X \backslash \bigcup_{d(\lambda) = n} R_\lambda \right) = \mu \left( A \cap \left(X \backslash \bigcup_{d(\lambda) = n} R_\lambda \right)  \right) \leq \mu \left(X \backslash \bigcup_{d(\lambda) = n} R_\lambda \right) = 0.\]

We now argue that, for any $\lambda \in \Lambda$, the set $Y_\lambda := \{ y \in D_{s(\lambda)} : \frac{d(\mu_A \circ \tau_\lambda)}{d\mu_A}(y) = 0\}$ has $\mu_A$-measure zero. 
{As $A$ is invariant, i.e., $\mu((\tau^{d(\lambda)})^{-1}(A)\Delta A)=0$, the sets 
\[\quad  \{ x \in A: \tau^{d(\lambda)}(x) \not\in A\} =_{a.e.} \{ x \in A: \text{ for all } z \in A, d(\eta) = d(\lambda), x \not= \tau_\eta(z)\} \]
\[  \text{ and } \quad \{ x\not \in A: \tau^{d(\lambda)}(x) \in A\} =_{a.e.} \{ x\not\in A : x = \tau_\eta(z) \text{ for some } z \in A, d(\eta) = d(\lambda)\}\]
have measure zero.

Thus, their intersection with any measurable subset $\tau_\lambda(D)$ of $R_\lambda$ also has measure zero, so the fact that $\tau_\lambda$ is injective a.e.\footnote{because $\tau^{d(\lambda)} \circ \tau_\lambda =_{a.e.} id|_{D_{s(\lambda)}}$} implies that (writing $x = \tau_\lambda(z)$ for $z \in D$)
\[ 0 = \mu(\{ \tau_\lambda(z) \in A : z \in D \backslash A\}) = \mu(\{ \tau_\lambda(z) \not \in A: z \in A \cap D\}).\]
In other words, if $D \subseteq D_{s(\lambda)} \cap A$, then 
\[ 
 \mu(\tau_\lambda(D\cap A)\cap A) = \mu(\tau_\lambda(D\cap A)) = \mu(\tau_\lambda(D)) =  \mu(\tau_\lambda(D) \cap A) .\]
 We conclude that for any measurable set $D \subseteq D_{s(\lambda)} \cap A$,}

\[ 0 < \int_{D} \frac{d(\mu \circ \tau_\lambda)}{d\mu} \,d\mu  = \mu(\tau_\lambda(D)) = \mu(\tau_\lambda(D) \cap A) = \mu_A(\tau_\lambda(D)) = \int_{D} \frac{d(\mu_A \circ \tau_\lambda)}{d\mu_A} \, d\mu_A.\]
As $\mu|_A = \mu_A$, the uniqueness of the Radon-Nikodym derivatives implies that 
\begin{equation}
\label{eq:RN-restriction}
\frac{d(\mu \circ \tau_\lambda)}{d\mu} = \frac{d(\mu_A \circ \tau_\lambda)}{d\mu_A} \quad \text{ a.e.~on } \quad D_{s(\lambda)} \cap A.
  \end{equation} 
  Recall from Condition (2) of Definition \ref{def:SBFS} that
\[X_\lambda := \left\{ x: \frac{d(\mu \circ \tau_\lambda)}{d\mu} (x) = 0 \right\}\]
 has $\mu$-measure 0.  In other words, $Y_\lambda = X_\lambda \cap A$ up to sets of measure zero.  Since $\mu(X_\lambda) = 0$ it follows that $\mu_A(Y_\lambda) = 0$ as claimed.

We have thus established that the maps $\{ \tau_\lambda\}_{\lambda \in \Lambda}$ on $(X, \mu_A)$ satisfy Condition (a) of Definition \ref{def-lambda-SBFS-1}, and Condition (b) holds by construction.  The fact that $\mu_A(Y) \leq \mu(Y)$ for all $Y \subseteq X$ gives us Condition (c), and Condition (d) holds on $(X, \mu_A)$ because we have not changed the definition of any of the maps.  It follows that $\{\tau_\lambda\}_{\lambda \in \Lambda}$ induces a $\Lambda$-semibranching function system on $(X, \mu_A)$.

To see that a $\Lambda$-projective system on $(X, \mu)$ restricts to one on $(X, \mu_A)$, suppose that we have functions $\{ f_\lambda\}_{\lambda \in \Lambda}$ satisfying Definition \ref{def:lambda-proj-system} with respect to $\mu$.  That is, each $f_\lambda$ is supported on $R_\lambda$ and $|f_\lambda|^2  = \frac{d(\mu \circ (\tau_\lambda)^{-1})}{d\mu}$, $\mu$-a.e.~on $R_\lambda$.  

Let $D \subseteq R_\lambda$ be $\mu_A$-measurable; then there exists a $\mu$-measurable set $B \subseteq R_\lambda$ such that $D = A \cap B$.  As $D \subseteq A$, 
\[ \int_D \frac{d(\mu \circ (\tau_\lambda)^{-1})}{d\mu} d\mu = \int_D \frac{d(\mu \circ (\tau_\lambda)^{-1})}{d\mu} d\mu_A.\]
Since $D = A \cap B \subseteq R_\lambda$ and $A$ is invariant,
{$\tau_\lambda^{-1}(D) = \{ x: \tau_\lambda(x) \in D = B \cap A\} $
satisfies 
\begin{align*}
\tau_\lambda^{-1}(D) \backslash A &= \{ x: \tau_\lambda(x) \in D, x \not\in A \} \subseteq \{ x : \tau_\lambda(x) \in A, x \not\in A\}\\
&\subseteq \tau^{d(\lambda)} (\{ y \in R_\lambda \cap A: \tau^{d(\lambda)}(y) \not\in A\}) \subseteq \tau^{d(\lambda)}( A \Delta (\tau^{d(\lambda)})^{-1}(A))
\end{align*}
has measure zero by Lemma \ref{lem:measure-zero-preserved}.
Furthermore, as $D \subseteq B, (\tau_\lambda)^{-1}(D) \backslash (\tau_\lambda)^{-1}(B) = \emptyset$.  We conclude that $\tau_\lambda^{-1}(D) \subseteq_{a.e.} A \cap (\tau_\lambda)^{-1}(B)$. Similarly, 
\begin{align*}
( A \backslash \tau_\lambda^{-1}(A)) \cap \tau_\lambda^{-1}(B) &= \{ x \in A: \tau_\lambda(x) \in B \backslash A\} \\
&= \tau^{d(\lambda)} (\{ y: y \not\in A, \tau^{d(\lambda)}(y) \in A \} \cap B) \subseteq \tau^{d(\lambda)}( (A \Delta (\tau^{d(\lambda)})^{-1}(A) ) \cap B)
\end{align*}
has measure 0, and so 
\[ A \cap \tau_\lambda^{-1}(B ) \backslash \left( \tau_\lambda^{-1}(D) = \tau_\lambda^{-1}(A) \cap \tau_\lambda^{-1}(B)  \right) \]
has measure zero.  Consequently,}
 $(\tau_\lambda)^{-1}(D) =_{a.e.} A \cap (\tau_\lambda)^{-1}(B).$  Consequently, 
\[ \int_D \frac{d(\mu \circ (\tau_\lambda)^{-1})}{d\mu} d\mu = \mu( A \cap (\tau_\lambda)^{-1}(B) ) = \mu_A( (\tau_\lambda)^{-1}(B) ) = \int_D \frac{d(\mu_A \circ (\tau_\lambda)^{-1})}{d\mu_A} d\mu_A.\]
As $D$ was an arbitrary $\mu_A$-measurable set and Radon-Nikodym derivatives are unique, it follows that 
\[ \left.\frac{d(\mu \circ (\tau_\lambda)^{-1})}{d\mu} \right|_A = \frac{d(\mu_A \circ (\tau_\lambda)^{-1})}{d\mu_A}.\]
Consequently, if the functions $\{f_\lambda\}_{\lambda \in \Lambda}$ give a $\Lambda$-projective system on $(X, \mu)$, so that $|f_\lambda|^2 = \frac{(\mu \circ (\tau_\lambda)^{-1})}{d\mu}$, then their restrictions $f_\lambda|_A $ satisfy $\left| f_\lambda|_A\right|^2 =  \frac{d(\mu_A \circ (\tau_\lambda)^{-1})}{d\mu_A}$.  The fact that the restrictions $f_\lambda|_A$ satisfy Condition (b) of Definition \ref{def:lambda-proj-system} is immediate from the assumption that Condition (b) holds for the functions $f_\lambda$.
\end{proof}

\subsection{Measures on the infinite path space: the Carath\'eodory/Kolmogorov extension theorem }

{In this section we will present some  results that will guarantee the existence of (projection-valued)
measures on the infinite path space $\Lambda^\infty$ of a  row-finite source-free $k$-graph. Indeed, it will turn out that
by using the  Carath\'eodory/Kolmogorov extension theorems and their projection-valued analogues, it will be sufficient to define our measures on cylinder sets.} 

Recall that a measure $\mu$ on a measure space $(X, \mathcal{B})$ is $\sigma$-finite if there exists a sequence of subsets $S_n \in  \mathcal{B}$ with $X =\bigcup_n S_n$ and $\mu(S_n)< \infty,\  \forall n.$ 
{Also recall that a  family $\mathcal{S}$  of subsets of a set $X$ is called a semiring of sets if it contains the empty set, $A \cap B \in  \mathcal{S}$ for all $ A, B \in  \mathcal{S}$ and, for every pair of sets $A, B \in\mathcal{S}$ with $A\subseteq B$, the set $B\
	\backslash A$ is the union of finitely many disjoint sets in $\mathcal{S}$. If $X \in \mathcal{S}$, then $ \mathcal{S}$ is called a semialgebra. Semirings and semialgebras canonically generate associated rings and algebras of sets by taking finite unions. In particular  a semiring (resp.~semialgebra) is a ring (resp.~algebra) if and only if is closed under finite unions.}

\begin{thm}[Carath\'eodory/Kolmogorov]  
\label{thm:Cara-extension}\cite[Theorem 1.3.10]{Ash} If 
$\mathcal F_0$ is an algebra\footnote{Ash \cite{Ash} uses the word ``field'' instead of ``algebra.''} of subsets of $X$, and $\mu$ is a countably additive function on $\mathcal F_0$ such that $X$ is $\sigma$-finite with respect to $(\mathcal F_0, \mu)$, then $\mu$ extends uniquely to a measure on the $\sigma$-algebra generated by $\mathcal F_0$.
\end{thm}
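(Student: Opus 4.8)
The plan is to follow the classical Carath\'eodory outer-measure construction, reserving the $\sigma$-finiteness hypothesis for the uniqueness half of the argument. First I would define, for an arbitrary subset $E\subseteq X$,
\[ \mu^*(E) = \inf\left\{ \sum_{n} \mu(A_n) : E \subseteq \bigcup_n A_n,\ A_n \in \mathcal{F}_0 \right\}, \]
the infimum being taken over all countable covers of $E$ by members of $\mathcal{F}_0$. A routine verification shows that $\mu^*$ is an outer measure, i.e.\ that it is monotone, countably subadditive, and vanishes on the empty set; these facts use only that $\mathcal{F}_0$ is an algebra and that $\mu$ is nonnegative and finitely additive, not the full strength of countable additivity.

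The heart of the existence argument is to check that $\mu^*$ genuinely extends $\mu$, that is, $\mu^*(A) = \mu(A)$ for every $A\in\mathcal{F}_0$. The inequality $\mu^*(A)\le\mu(A)$ is immediate from the one-element cover $\{A\}$. The reverse inequality $\mu^*(A)\ge\mu(A)$ is precisely where the countable additivity of $\mu$ on $\mathcal{F}_0$ is indispensable: given any cover $A\subseteq\bigcup_n A_n$, I would disjointify it by setting $B_n = A\cap A_n\setminus\bigcup_{j<n}A_j \in \mathcal{F}_0$, whose disjoint union is $A$, and then invoke countable additivity to obtain $\mu(A)=\sum_n\mu(B_n)\le\sum_n\mu(A_n)$; passing to the infimum gives $\mu(A)\le\mu^*(A)$. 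This is the step I expect to be the main obstacle, since it is the one point at which the hypothesis enters the existence claim in an essential (rather than merely finitely-additive) way.

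Next I would introduce the collection of Carath\'eodory-measurable sets,
\[ \mathcal{M} = \{ E\subseteq X : \mu^*(T) = \mu^*(T\cap E) + \mu^*(T\setminus E) \text{ for all } T\subseteq X \}, \]
and apply the standard Carath\'eodory lemma, which asserts that for any outer measure $\mathcal{M}$ is a $\sigma$-algebra and the restriction $\mu^*|_{\mathcal{M}}$ is a complete measure. I would then show that each $A\in\mathcal{F}_0$ lies in $\mathcal{M}$, a computation that again rests only on finite additivity together with the definition of $\mu^*$. Since $\mathcal{M}$ is a $\sigma$-algebra, this yields $\sigma(\mathcal{F}_0)\subseteq\mathcal{M}$, and restricting $\mu^*$ to $\sigma(\mathcal{F}_0)$ produces a measure extending $\mu$, settling existence.

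Finally, for uniqueness I would deploy the $\sigma$-finiteness hypothesis via a Dynkin $\pi$--$\lambda$ argument. Suppose $\nu$ is another measure on $\sigma(\mathcal{F}_0)$ with $\nu|_{\mathcal{F}_0}=\mu|_{\mathcal{F}_0}$, and write $X=\bigcup_n S_n$ with $S_n\in\mathcal{F}_0$, $\mu(S_n)<\infty$, arranging the $S_n$ to be increasing. Fixing $n$, I would consider $\mathcal{D}_n = \{ E\in\sigma(\mathcal{F}_0) : \mu(E\cap S_n)=\nu(E\cap S_n)\}$. Since $\mathcal{F}_0$ is a $\pi$-system on which the two measures agree and $\mu(S_n)<\infty$, one verifies that $\mathcal{D}_n$ is a $\lambda$-system containing $\mathcal{F}_0$, whence Dynkin's theorem forces $\mathcal{D}_n=\sigma(\mathcal{F}_0)$. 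Letting $n\to\infty$ and invoking continuity from below of both measures then gives $\mu(E)=\nu(E)$ for every $E\in\sigma(\mathcal{F}_0)$, which completes the proof.
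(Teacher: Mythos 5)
Your proposal is correct: the paper does not prove this statement at all (it is quoted verbatim from Ash, Theorem 1.3.10), and your argument is precisely the classical one behind that citation — the Carath\'eodory outer-measure construction for existence, with countable additivity entering exactly where you flag it (showing $\mu^*(A)\ge\mu(A)$ via disjointification), and a Dynkin $\pi$--$\lambda$ argument exploiting $\sigma$-finiteness for uniqueness. Nothing further is needed.
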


{We also note the following projection-valued measure extension of the above result:

\begin{lemma}
	\label{lem:kolmogorov-projection-valued-measures} \cite[Theorem 7]{berberian} If $\mathcal{F}$ is a bounded projection-valued measure defined on a ring of sets  $\mathcal{R}$, there exists one and only one (necessarily bounded) 
	projection-valued measure $\mathcal{E}$ on $\sigma(\mathcal{R})$, the $\sigma$-ring generated by $\mathcal{R},$ such that $\mathcal{E}$ is an extension of $\mathcal{F}$. If $\mathcal{R}$ is an algebra, then  $ \sigma(\mathcal{R})$ equals  the $\sigma$-algebra  generated by $\mathcal{R}$.
\end{lemma}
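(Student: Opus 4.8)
The plan is to reduce the projection-valued statement to the scalar extension theorem (Theorem~\ref{thm:Cara-extension}) by polarization, and then to recover the projection-valued structure through a monotone-class argument. Throughout, let $\mathcal H$ denote the Hilbert space on which the projections $\mathcal F(A)$ act.

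First, for each $\xi \in \mathcal H$ I would define $\mu_\xi(A) := \langle \mathcal F(A)\xi, \xi\rangle = \|\mathcal F(A)\xi\|^2$ for $A \in \mathcal R$. Because $\mathcal F$ is a bounded projection-valued measure, each $\mu_\xi$ is a finite positive set function on $\mathcal R$ (bounded by $\|\xi\|^2$, since $\mathcal F(A)$ is a projection), and it is countably additive: if $A = \bigsqcup_n A_n$ with all sets in $\mathcal R$, the orthogonality of the ranges of $\{\mathcal F(A_n)\}$ gives $\|\mathcal F(A)\xi\|^2 = \sum_n \|\mathcal F(A_n)\xi\|^2$. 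Finiteness makes $\sigma$-finiteness automatic, so Theorem~\ref{thm:Cara-extension} (in its standard version for rings and $\sigma$-rings) extends each $\mu_\xi$ uniquely to a measure on $\sigma(\mathcal R)$. By polarization I would then define, for $B \in \sigma(\mathcal R)$, the complex measure $\mu_{\xi,\eta}(B) = \tfrac14\sum_{j=0}^3 i^j\, \mu_{\xi + i^j\eta}(B)$; this agrees with $\langle \mathcal F(\cdot)\xi, \eta\rangle$ on $\mathcal R$ and is sesquilinear in $(\xi,\eta)$.

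Next, for fixed $B$ the form $(\xi,\eta) \mapsto \mu_{\xi,\eta}(B)$ is bounded: the estimate $0 \le \mu_\xi(B) \le \|\xi\|^2$ together with polarization yields $|\mu_{\xi,\eta}(B)| \le C\|\xi\|\,\|\eta\|$, so there is a unique bounded operator $\mathcal E(B)$ with $\langle \mathcal E(B)\xi, \eta\rangle = \mu_{\xi,\eta}(B)$. Self-adjointness of $\mathcal E(B)$ is immediate from $\mu_\xi(B)\ge 0$. The crux is to show $\mathcal E(B)$ is a \emph{projection}, which I would obtain from the multiplicativity relation $\mathcal E(B_1)\mathcal E(B_2) = \mathcal E(B_1 \cap B_2)$ by taking $B_1 = B_2 = B$. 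This relation holds on $\mathcal R$ because $\mathcal F$ is projection-valued there; to propagate it to $\sigma(\mathcal R)$ I would fix $B_1 \in \mathcal R$ and verify that $\{B_2 \in \sigma(\mathcal R): \mathcal E(B_1)\mathcal E(B_2) = \mathcal E(B_1\cap B_2)\}$ is a monotone class, using weak-operator continuity of $\mathcal E$ along monotone sequences (which follows from countable additivity of the $\mu_{\xi,\eta}$). Since the monotone class generated by a ring is its generated $\sigma$-ring, this collection is all of $\sigma(\mathcal R)$; a second application with $B_2$ fixed in $\sigma(\mathcal R)$ and $B_1$ varying gives full multiplicativity.

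Countable additivity of $\mathcal E$ then follows from that of each $\mu_{\xi,\eta}$ in the weak operator topology, upgraded to the strong topology using the orthogonality of $\mathcal E$ on disjoint sets supplied by multiplicativity. Uniqueness is easy: any extension $\mathcal E'$ gives scalar measures $\langle \mathcal E'(\cdot)\xi,\xi\rangle$ extending $\mu_\xi$, which must coincide with $\langle \mathcal E(\cdot)\xi,\xi\rangle$ by scalar uniqueness, and polarization forces $\mathcal E' = \mathcal E$. Finally, the last sentence is the standard set-theoretic fact that if $\mathcal R$ is an algebra then $X \in \mathcal R$, so the generated $\sigma$-ring contains $X$ and is therefore a $\sigma$-algebra, coinciding with the $\sigma$-algebra generated by $\mathcal R$. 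I expect the main obstacle to be the monotone-class propagation of multiplicativity, since everything else reduces cleanly to the scalar theorem; the delicate point is ensuring the limiting operations are valid in the appropriate operator topology.
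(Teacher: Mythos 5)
Your proposal is correct, but there is nothing internal to compare it against: the paper does not prove this lemma, it simply quotes it as Theorem 7 of Berberian's \emph{Notes on spectral theory} \cite{berberian}. What you have written is essentially the classical proof of that cited result — reduce to scalar measures $\mu_\xi = \langle \mathcal{F}(\cdot)\xi,\xi\rangle$, extend by Carath\'eodory, rebuild operators by polarization, establish multiplicativity $\mathcal{E}(B_1)\mathcal{E}(B_2)=\mathcal{E}(B_1\cap B_2)$ by a monotone-class argument (valid since the monotone class generated by a ring is the generated $\sigma$-ring), and upgrade weak to strong countable additivity via orthogonality. Three details would need spelling out in a full write-up, none of them fatal: (i) the paper's Theorem~\ref{thm:Cara-extension} is stated for algebras with a $\sigma$-finiteness hypothesis, so you genuinely need the ring/$\sigma$-ring version you allude to, which works here because each $\mu_\xi$ is bounded and every set in $\sigma(\mathcal{R})$ is covered by countably many members of $\mathcal{R}$; (ii) sesquilinearity of $(\xi,\eta)\mapsto\mu_{\xi,\eta}(B)$ for general $B\in\sigma(\mathcal{R})$ is not automatic from the polarization formula — it follows because the two sides of each linearity identity are finite complex measures agreeing on the $\pi$-system $\mathcal{R}$, hence on all of $\sigma(\mathcal{R})$; and (iii) the multiplicativity of $\mathcal{F}$ on $\mathcal{R}$ itself, which seeds your monotone class, deserves a line: for disjoint sets it follows since a sum of two projections is a projection only when they are mutually orthogonal, and the general case follows by decomposing $A$ and $B$ over $A\cap B$. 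With those details filled in, your argument is a faithful reconstruction of the proof of the theorem the paper cites.
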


We will apply the above results to construct (projection-valued) measures  on $\Lambda^\infty$.
In particular, in our applications   we will often take $X = v\Lambda^\infty$
for a fixed vertex $v \in \Lambda^0$, and the algebra $\mathcal{S}$ to be the collection of sets formed by taking finite intersections  and unions
 of the cylinder sets $Z(\lambda)$ with $r(\lambda) = v$.
To check that $\mathcal S$ is an algebra, 
 notice that $X = v\Lambda^\infty = Z(v)$ is in $\mathcal S$ {and use the following lemma, whose proof is a straightforward application of the definitions given above.
 
\begin{lemma}\label{carnival} Let $\Lambda$ be a row-finite k-graph and $v$ be a vertex in $\Lambda^0$. If $\alpha, \beta \in v\Lambda$ then:
\begin{enumerate}
	\item  $Z(\alpha) \cap Z(\beta) = \bigsqcup_{(\lambda, \nu) \in \Lambda^{\min}(\alpha,\beta)} Z(\alpha \lambda)$ is a finite disjoint union of cylinder sets;
	\item  $Z(\alpha) \backslash Z(\beta) = Z(\alpha) \backslash \bigsqcup_{\xi \in MCE(\alpha, \beta)} Z(\xi) = \bigsqcup \{ Z(\alpha \lambda): d(\alpha \lambda) = d(\alpha) \vee d(\beta) \text{ but } \alpha \lambda \not\in MCE(\alpha, \beta) \}$  
	 is also a finite disjoint union of cylinder sets;
	\item $Z(\alpha) \cup Z(\beta) $ is therefore also a finite disjoint union of cylinder sets.
\end{enumerate}
\end{lemma}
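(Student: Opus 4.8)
The plan is to reduce all three parts to two elementary facts about cylinder sets, both immediate from the factorization property and row-finiteness: (i) if $\mu \ne \mu'$ with $d(\mu) = d(\mu')$, then $Z(\mu) \cap Z(\mu') = \emptyset$, since any $x$ in the intersection would give $\mu = x(0, d(\mu)) = x(0, d(\mu')) = \mu'$; and (ii) for any $\alpha \in v\Lambda$ and any $p \geq d(\alpha)$, the factorization property yields the finite disjoint decomposition $Z(\alpha) = \bigsqcup_{\lambda \in s(\alpha)\Lambda^{p - d(\alpha)}} Z(\alpha\lambda)$, where finiteness comes from row-finiteness of $\Lambda$. I would establish (i) and (ii) first and then invoke them throughout.

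For part (1), I would take $p = d(\alpha) \vee d(\beta)$ and argue that $x \in Z(\alpha) \cap Z(\beta)$ precisely when the path $\xi := x(0, p)$ of degree $p$ simultaneously extends $\alpha$ and $\beta$. By the factorization property such a $\xi$ factors uniquely as $\xi = \alpha\lambda = \beta\nu$ with $d(\xi) = p$, which is exactly the condition $(\lambda, \nu) \in \Lambda^{\min}(\alpha, \beta)$; conversely each such pair gives $Z(\alpha\lambda) \subseteq Z(\alpha) \cap Z(\beta)$. This establishes the set equality $Z(\alpha) \cap Z(\beta) = \bigsqcup_{(\lambda,\nu) \in \Lambda^{\min}(\alpha,\beta)} Z(\alpha\lambda)$, with disjointness coming from (i) (distinct pairs yield distinct paths $\alpha\lambda$ of common degree $p$) and finiteness from the fact that these paths $\alpha\lambda$ all lie in the finite set $v\Lambda^p$.

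For part (2), I would rephrase part (1) in terms of $\MCE(\alpha,\beta)$, noting that $Z(\alpha) \cap Z(\beta) = \bigsqcup_{\xi \in \MCE(\alpha,\beta)} Z(\xi)$ and that every $\xi \in \MCE(\alpha,\beta)$ is one of the degree-$p$ extensions $\alpha\lambda$ appearing in the decomposition (ii) of $Z(\alpha)$. Subtracting these blocks from that partition leaves exactly $\bigsqcup\{ Z(\alpha\lambda) : d(\alpha\lambda) = p,\ \alpha\lambda \notin \MCE(\alpha,\beta)\}$, which gives the claimed formula as a finite disjoint union of cylinder sets. Part (3) then follows at once by writing $Z(\alpha) \cup Z(\beta) = (Z(\alpha) \backslash Z(\beta)) \sqcup Z(\beta)$ and applying part (2) to the first summand, since $Z(\beta)$ is itself a cylinder set disjoint from $Z(\alpha) \backslash Z(\beta)$.

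The computations are routine once (i) and (ii) are in hand; the only point requiring care is the bookkeeping in part (2), namely confirming that $\MCE(\alpha,\beta)$ is genuinely a subcollection of the degree-$p$ blocks $\{\alpha\lambda\}$ partitioning $Z(\alpha)$ — equivalently, that every minimal common extension has degree exactly $p = d(\alpha) \vee d(\beta)$ — so that the set subtraction $Z(\alpha) \backslash (Z(\alpha) \cap Z(\beta))$ corresponds cleanly to discarding whole partition blocks rather than splitting them.
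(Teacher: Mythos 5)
Your proof is correct, and it is exactly the argument the paper has in mind: the paper offers no written proof at all, stating only that the lemma is ``a straightforward application of the definitions,'' and your reduction to the two facts (uniqueness of degree-$p$ initial segments and the row-finite partition $Z(\alpha)=\bigsqcup_{\lambda\in s(\alpha)\Lambda^{p-d(\alpha)}}Z(\alpha\lambda)$) is the standard way to carry that out. In particular, you correctly identify and verify the one point needing care, namely that every element of $\MCE(\alpha,\beta)$ has degree exactly $d(\alpha)\vee d(\beta)$ and hence is a whole block of the partition of $Z(\alpha)$, so nothing further is needed.
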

}
}

Before discussing projection-valued measures on $\Lambda^\infty$, we 
{pause to reassure the reader that there do indeed exist real-valued  measures on the infinite path space 
of  row-finite $k$-graphs. One approach to constructing such measures is to find  a vector  $\xi \in \R_{> 0}^{\Lambda^0}$ which is an eigenvector for each adjacency matrix $A_i$ of $\Lambda$.  Given such an eigenvector, write $\beta_i$ for the eigenvalue of $A_i$ with respect to $\xi$, and for $n = (n_1, \ldots, n_k) \in \Z^k$, write $\beta^n := \beta_1^{n_1} \cdots \beta_k^{n_k}$.  Then, if we  define 
\[ \mu(Z(\lambda)) := \beta^{-d(\lambda)} \xi_{s(\lambda)},\]
one can compute that $\mu$ is countably additive on the algebra $\mathcal S$ of finite unions of cylinder sets, and hence, by Theorem \ref{thm:Cara-extension}, induces a measure on the $\sigma$-algebra generated by the cylinder sets. This is the content of the next theorem, which arose from discussions with Sooran Kang.} {
\begin{thm}
	Suppose that $\Lambda$ is a row-finite $k$-graph with no sources.
	If there exists a vector $\xi \in \R^{\Lambda^0}_{>0}$ which is an eigenvector for each adjacency matrix $A_i$ of $\Lambda$, then the formula 
	\begin{equation}
	 \mu(Z(\lambda)) := \beta^{-d(\lambda)} \xi_{s(\lambda)}
	 \label{eq:mu}
	\end{equation}
	defines a measure on the Borel $\sigma$-algebra of $\Lambda^\infty$.
	\label{thm:meas-from-eigenvect}
\end{thm}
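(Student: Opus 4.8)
The plan is to define $\mu$ first on cylinder sets via \eqref{eq:mu}, extend it to the algebra generated by the cylinder sets, verify that this extension is a countably additive function, and then invoke the Carath\'eodory/Kolmogorov extension theorem (Theorem~\ref{thm:Cara-extension}). Since $\Lambda^\infty$ is typically noncompact in the row-finite setting, I would carry this out one vertex at a time: fix $v \in \Lambda^0$ and work on the \emph{compact} space $X_v := v\Lambda^\infty = Z(v)$, with $\mathcal{S}_v$ the collection of finite unions of cylinder sets $Z(\lambda)$ with $r(\lambda) = v$. By Lemma~\ref{carnival}, $\mathcal{S}_v$ is an algebra of subsets of $X_v$. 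Once $\mu$ is shown to extend to the Borel $\sigma$-algebra of each $X_v$, I would assemble these pieces over the countably many vertices using the $\sigma$-algebra $\mathcal{A}$ of the disjoint union $\Lambda^\infty = \bigsqcup_n v_n\Lambda^\infty$, setting $\mu(A) = \sum_n \mu(A \cap v_n\Lambda^\infty)$.

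The crux of the argument — and the only place the eigenvector hypothesis enters — is the verification that $\mu$ is well defined and finitely additive on $\mathcal{S}_v$. The key is the local consistency relation
\[ \mu(Z(\lambda)) = \sum_{\eta \in s(\lambda)\Lambda^{e_i}} \mu(Z(\lambda\eta)), \]
which corresponds to the disjoint decomposition $Z(\lambda) = \bigsqcup_{\eta \in s(\lambda)\Lambda^{e_i}} Z(\lambda\eta)$ supplied by the factorization property. Writing $w = s(\lambda)$ and grouping the edges $\eta \in w\Lambda^{e_i}$ by their source, the right-hand side equals $\beta^{-d(\lambda)-e_i}\sum_{u} \#(w\Lambda^{e_i}u)\,\xi_u = \beta^{-d(\lambda)}\beta_i^{-1}(A_i\xi)_w = \beta^{-d(\lambda)}\xi_w$, which is precisely $\mu(Z(\lambda))$; here the eigenvalue equation $A_i\xi = \beta_i\xi$ is exactly what makes the two sides agree. (Since $\Lambda$ has no sources, each row of $A_i$ is nonzero, so $\beta_i > 0$ and \eqref{eq:mu} indeed takes positive finite values, ensuring $\mu$ is a positive measure.) Iterating this relation shows that subdividing any cylinder set to a fixed degree $n$ leaves its $\mu$-value unchanged. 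As any element of $\mathcal{S}_v$ is a finite disjoint union of cylinder sets, and any two such representations admit a common refinement into cylinder sets of a single degree $n \geq \bigvee_i d(\lambda_i)$ — each degree-$n$ cylinder being either contained in or disjoint from the set — finite additivity of $\mu$ on $\mathcal{S}_v$ follows. I expect this well-definedness step to be the main (and essentially only) obstacle.

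For countable additivity the usual difficulty essentially evaporates because of compactness. By the standard reduction it suffices to check continuity at $\emptyset$: if $B_n \in \mathcal{S}_v$ decrease to $\emptyset$, then since each cylinder set is compact in the row-finite setting, each $B_n$ is compact, so a nested sequence with empty intersection must have $B_N = \emptyset$ for some $N$ by the finite intersection property; hence $\mu(B_n) \to 0$ automatically. (Equivalently, any countable disjoint decomposition inside the algebra is forced to be finite, so countable additivity reduces to finite additivity.) Since $\mu(X_v) = \mu(Z(v)) = \xi_v < \infty$, Theorem~\ref{thm:Cara-extension} then extends $\mu$ uniquely to a measure on $\sigma(\mathcal{S}_v) = \mathcal{B}_o(v\Lambda^\infty)$. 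Summing over the countably many vertices as above produces a measure on the Borel $\sigma$-algebra $\mathcal{A}$ of $\Lambda^\infty$, which is $\sigma$-finite because each $v_n\Lambda^\infty$ has finite measure $\xi_{v_n}$; once the finite-additivity step is in hand, compactness and the extension theorem do the rest.
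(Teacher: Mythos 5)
Your proof is correct, and its skeleton is the same as the paper's: finite additivity on cylinder sets via the eigenvector relation, compactness of cylinder sets (from row-finiteness) for countable additivity, and the Carath\'eodory/Kolmogorov theorem to finish. The differences are organizational, but two of them are worth recording. First, for well-definedness the paper fixes a single cylinder decomposed as $Z(\lambda)=\bigsqcup_{i=1}^p Z(\eta_i)$ and does bookkeeping with $\Lambda^{\min}(\lambda,\eta_i)$ and $\MCE(\lambda,\eta_i)$, refining to the degree $n=\vee_{i,j}d(\lambda\alpha_i^j)$; your common-degree refinement (every element of the algebra is a disjoint union of degree-$n$ cylinders, which are pairwise equal or disjoint) handles arbitrary elements of the algebra in one stroke and avoids the $\Lambda^{\min}$ notation, while resting on exactly the same computation $\sum_{\eta\in w\Lambda^{e_i}}\xi_{s(\eta)}=(A_i\xi)_w=\beta_i\xi_w$. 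Likewise, the paper proves countable additivity by a double-sum rearrangement (each compact $Z(\lambda_i)$ meets only finitely many of the open sets $Z(\eta_j)$), whereas you observe that compactness forces any countable disjoint decomposition inside the algebra to be finite; these are the same compactness fact deployed slightly differently. Second, your vertex-by-vertex treatment is not merely cosmetic: when $\Lambda^0$ is infinite, finite unions of cylinder sets form only a \emph{ring} of subsets of $\Lambda^\infty$ (the complement of $Z(v)$ meets infinitely many $Z(w)$, so it is not a finite union of cylinders), while Theorem~\ref{thm:Cara-extension} as stated requires an \emph{algebra}. The paper applies the extension theorem globally and is slightly loose on this point; on each compact $v\Lambda^\infty$ your collection $\mathcal S_v$ genuinely is an algebra by Lemma~\ref{carnival}, and your assembly $\mu(A)=\sum_n\mu(A\cap v_n\Lambda^\infty)$ mirrors what the paper itself does for projection-valued measures in Proposition~\ref{prop-proj-valued-measure-gen-case}. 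So your route buys a bit of extra rigor at the extension step, at the cost of the final (routine) gluing over vertices.
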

\begin{proof}
We will first show that $\mu$ is well defined and finitely additive on cylinder sets; that is, if $Z(\lambda) = \sqcup_{i=1}^p Z(\eta_i)$ then $\sum_{i=1}^p \beta^{-d(\eta_i)} \xi_{s(\eta_i)} = \beta^{-d(\lambda)} \xi_{s(\lambda)}$.

	Suppose  $Z(\lambda) = \sqcup_{i=1}^p Z(\eta_i)$.  
	Since $Z(\lambda)=\sqcup_{i=1}^pZ(\eta_i)$, $\MCE(\lambda,\eta_i)\ne \emptyset$ for all $1\le i\le p$. (In fact, for any $n \geq d(\lambda)\vee d(\eta_i)$ and for any $\beta_i^j \in s(\eta_i)\Lambda^{n-d(\eta_i)}$, 
	the fact that $Z(\lambda) = \sqcup_{i=1}^p Z(\eta_i)$ implies that  $\eta_i \beta_i^j$ is an extension of $\lambda$.  Consequently, there must exist a corresponding $\alpha_i^j \in s(\lambda) \Lambda^{n-d(\lambda)}$ so that $\lambda \alpha_i^j = \eta_i \beta_i^j$.  In other words, 
	for any $n\geq d(\lambda) \vee d(\eta_i)$, there is a bijection between $\Lambda^{\min}(\lambda, \eta_i)$ and $s(\eta_i)\Lambda^{n - d(\eta_i)}$.)
	
	Since $\Lambda$ is row-finite, for each $i$, the set $\Lambda^{\min}(\lambda, \eta_i)$ is finite; write its elements as $\{(\alpha_i^j, \beta_i^j)\}_{j\in J}$, where $J$ is a finite index set. Let
	\[
	n=\vee_{i,j}d(\lambda\alpha_i^j)=\vee_{i,j}d(\eta_i\beta^j_i).
	\]
	Then we have
	\begin{equation*}
		Z(\lambda)=\bigsqcup_{i=1}^p\bigsqcup_{(\alpha_i^j, \beta_i^j)\in \Lambda^{\min}(\lambda,\eta_i)} \bigsqcup_{\xi_{ij}\in s(\alpha_i^j)\Lambda^{n-d(\lambda\alpha_i^j)}}Z(\lambda\alpha_i^j\xi_{ij}).
	\end{equation*}
	
	Observe that, for each $i,j$, $d(\alpha^i_j \xi_{ij}) = n - d(\lambda \alpha^i_j) + d(\alpha^i_j) = n-d(\lambda)$.  Moreover, 
	for any $ m\in \N^k,$  $Z(\lambda)=\sqcup_{\gamma\in s(\lambda)\Lambda^m}Z(\lambda\gamma)$.  Taking $m=n-d(\lambda)$ tells us that 
\begin{equation}
	\bigsqcup_{\gamma\in s(\lambda)\Lambda^m}Z(\lambda\gamma)=Z(\lambda) = \bigsqcup_{i=1}^p\bigsqcup_{(\alpha_i^j, \beta_i^j)\in \Lambda^{\min}(\lambda,\eta_i)} \bigsqcup_{\xi_{ij}\in s(\alpha)i^j)\Lambda^{n-d(\lambda\alpha_i^j)}}Z(\lambda\alpha_i^j\xi_{ij}).
	\label{eq:finite_2}
	\end{equation}
	Since both sides of the above equality are disjoint unions of cylinder sets of the same degree, the list of cylinder sets on the left must be precisely equal to the list of cylinder sets on the right.  That is, each cylinder set $Z(\lambda \gamma)$ must equal $Z(\lambda \alpha_i^j \xi_{ij})$ for precisely one path $\alpha_i^j \xi_{ij}$.

From the fact that $\xi$ is an eigenvector for each adjacency matrix $A_i$ with eigenvalue $\beta_i$, we easily compute that for any $m \in \N^k$,
	\begin{equation}\label{eq:fiinite_1}
		\mu(Z(\lambda))=\sum_{\gamma\in s(\lambda)\Lambda^m}\mu(Z(\lambda\gamma)).
	\end{equation}	
It now follows from Equations \eqref{eq:fiinite_1} and \eqref{eq:finite_2} that
	\begin{equation}\label{eq:finite_A1}
		\mu(Z(\lambda))=\sum_{i=1}^p\sum_{(\alpha_i^j, \beta_i^j)\in \Lambda^{\min}(\lambda,\eta_i)}\sum_{\xi_{ij}\in s(\alpha_i^j)\Lambda^{n-d(\lambda\alpha_i^j)}} \mu(Z(\lambda\alpha_i^j\xi_{ij})).
	\end{equation}
	Since $\lambda\alpha_i^j=\eta_i\beta_i^j$, we get
	\begin{equation}\label{eq:finite_A2}
		\mu(Z(\lambda))=\sum_{i=1}^p\sum_{(\alpha_i^j, \beta_i^j)\in \Lambda^{\min}(\lambda,\eta_i)}\sum_{\xi_{ij}\in s(\alpha)i^j)\Lambda^{n-d(\eta_i\beta_i^j)}} \mu(Z(\eta_i\beta_i^j\xi_{ij})).
	\end{equation}
	On the other hand, for fixed $1\le i\le p$,
	\begin{equation}\label{eq:finite_4}
		Z(\eta_i)=\bigsqcup_{(\alpha_i^j, \beta_i^j)\in \Lambda^{\min}(\lambda,\eta_i)}\bigsqcup_{\xi_{ij}\in s(\alpha_i^j)\Lambda^{n-d(\eta_i\beta_i^j)}}Z(\eta_i\beta_i^j\xi_{ij}),
	\end{equation}
	where $n=\vee_{i,j}d(\lambda\alpha_i^j)=\vee_{i,j}d(\eta_i\beta^j_i)$. Again, $d(\beta^j_i \xi_{ij}) = n - d(\eta_i \beta^j_i) + d(\beta^j_i) = n - d(\eta_i)$ is the same for all $\xi_{ij}$.
	In other words, we can apply Equation \eqref{eq:fiinite_1} to $\eta_i$ instead of $\lambda$, using the decomposition of $Z(\eta_i)$  from Equation \eqref{eq:finite_4} and setting $m= n -d(\eta_i)$.  It follows that  
	\begin{equation}
	\label{finite_B}
	\mu(Z(\eta_i)) = \sum_{(\alpha_i^j, \beta_i^j)\in \Lambda^{\min}(\lambda,\eta_i)}\sum_{\xi_{ij}\in s(\alpha_i^j)\Lambda^{n-d(\eta_i\beta_i^j)}} \mu(Z(\eta_i\beta_i^j\xi_{ij})).
	\end{equation}
	Now combining \eqref{eq:finite_A1}, \eqref{eq:finite_A2} and \eqref{finite_B}, we obtain
	\[\begin{split}
		\sum_{i=1}^p \mu(Z(\eta_i))&=\sum_{i=1}^p \sum_{(\alpha_i^j, \beta_i^j)\in \Lambda^{\min}(\lambda,\eta_i)}\sum_{\xi_{ij}\in s(\alpha_i^j)\Lambda^{n-d(\eta_i\beta_i^j)}} \mu(Z(\eta_i\beta_i^j\xi_{ij}))\\
		&=\sum_{i=1}^p\sum_{(\alpha_i^j, \beta_i^j)\in \Lambda^{\min}(\lambda,\eta_i)}\sum_{\xi_{ij}\in s(\alpha_i^j)\Lambda^{n-d(\lambda\alpha_i^j)}} \mu(Z(\lambda\alpha_i^j\xi_{ij}))\\
		&=\mu(Z(\lambda)).
	\end{split}\]
	 In other words, $\mu$ is indeed well-defined and finitely additive on  cylinder sets.
	 
	 We now show that $\mu$ is countably additive, and hence a measure on the algebra $\mathcal S$ of finite disjoint unions of cylinder sets.
By construction, if $S \in \mathcal S$ and $S = \sqcup_{i \in \N} Z(\lambda_i)$, we are defining $\mu(S) = \sum_i \mu(Z(\lambda_i))$.  To see that $\mu$ is a measure, we  merely need to check that $\mu$ is additive on countable disjoint unions.  Thus, suppose $\bigsqcup_{i \in \N} Z(\lambda_i)= \bigsqcup_{j\in \N} Z(\eta_j)$.  For each fixed $i \in \N$ we have 
	\[Z(\lambda_i) = \bigsqcup_{j\in \N} Z(\lambda_i) \cap Z(\eta_j),  \qquad \text{ and } \qquad Z(\lambda_i) \cap Z(\eta_j) = \bigsqcup_{\zeta \in \text{MCE}(\lambda_i, \eta_j)} Z(\zeta).\]
	The fact that $\Lambda$ is row-finite ensures that $\text{MCE}(\lambda_i, \eta_j)$ is finite for all $i, j$, and also that each cylinder set $Z(\lambda)$ is compact and open.  It follows that, since $\bigsqcup_{j\in \N} Z(\eta_j)$ is a cover for $Z(\lambda_i)$, there are only finitely many indices $j$ such that $Z(\lambda_i) \cap Z(\eta_j) \not= \emptyset.$

	Therefore, using the finite additivity of $\mu$, we have
	\begin{align*}
		\sum_{i\in \N} \mu(Z(\lambda_i)) &= \sum_{i\in \N} \sum_{j \in \N} \mu(Z(\lambda_i) \cap Z(\eta_j)) \\
		&= \sum_{i, j \in \N} \sum_{\zeta \in \text{MCE}(\lambda_i, \eta_j)} \mu(Z(\zeta)) \\
		&= \sum_{j\in \N} \mu(Z(\eta_j)),
	\end{align*}
	as desired.  Note that we are able to interchange the order of the summation over $i$ and $j$ since, for fixed $i$ (or equivalently for fixed $j$), only finitely many of the intersections $Z(\lambda_i) \cap Z(\eta_j)$ are non-empty.

Now, we use Carath\'eodory's Theorem (Theorem \ref{thm:Cara-extension}) to extend $\mu$ uniquely to give a measure (also denoted $\mu$) on the Borel $\sigma$-algebra of $\Lambda^\infty$, as desired. 
\end{proof}
}

{The above analysis begs the question of when  the adjacency matrices of $\Lambda$  admit a common positive eigenvector $\xi$.  When $\Lambda$ is finite and strongly connected, \cite[Corollary 4.2]{aHLRS3} guarantees that a unique such eigenvector (of $\ell^1$ norm 1) exists.  For row-finite (not necessarily finite) $k$-graphs, if $k=1$, 
Thomsen identified in \cite{Thom} when an infinite directed graph $\Lambda$ will admit such an eigenvector $\xi$. While we anticipate that much of Thomsen's analysis could be extended to the setting of higher-rank graphs, for the moment we simply present one example where this can be done.}

\begin{example}
\label{prop:cofinal-recurrent-example}
Define matrices 
\[ A_1 := \begin{pmatrix}
 1 & 0 & 0 &1 \\ 1 & 0 & 0 &1 \\ 0 & 1 & 1 &0 \\ 0 & 1 & 1 & 0 
\end{pmatrix}, \quad
A_2 := \begin{pmatrix}
1 & 0 & 1 & 0 \\ 1 & 0 & 1 & 0 \\ 0 & 1 & 0 &1 \\ 0 & 1 & 0 &1
\end{pmatrix} \]
Let $I$ denote the $4 \times 4 $ identity matrix, and define 
\[S := \begin{pmatrix}
A_1 &  I & 0 & 0 & 0 & \ldots \\
I & A_1 & I & 0 & 0 & \ldots \\
0 & I & A_1 & I & 0 & \ldots \\
\vdots & &  & \ddots  & & \ldots 
\end{pmatrix},
\quad
T := \begin{pmatrix}
A_2 &  I & 0 & 0 & 0 & \ldots \\
I & A_2 & I & 0 & 0 & \ldots \\
0 & I & A_2 & I & 0 & \ldots \\
\vdots &  & & \ddots  & & \ldots 
\end{pmatrix}
\]
Let $\Lambda$ be the infinite 2-graph with vertex matrices $S, T$.  Notice that $\Lambda$ consists of countably many copies  $\{\Gamma_n\}_{n\in \N}$ of the Ledrappier 2-graph $\Gamma$ (cf.~Example 5.4 of \cite{FGKPexcursions});  consecutive copies are linked by four ``forward'' edges and four ``backward'' edges of each color. 
Moreover, the vector  $\xi = (1_4, 2_4, 3_4, 4_4, 5_4, \ldots)$ given by $\xi_{4\ell + m} = \ell+1$ (if $0 \leq m \leq 3$) is an eigenvector for both $S$ and $T$, with eigenvalue $4$.

Thus, using the Carath\'eodory/Kolmogorov Extension Theorem (Theorem
\ref{thm:Cara-extension} above) we obtain a  measure  $\mu$ on $\Lambda^\infty$
which extends the measure defined on cylinder sets by
\[
\mu( Z( \lambda )  ) =  (4,4)^ { - d(\lambda)} \ \xi_{s(\lambda)},\quad \forall \lambda\in \Lambda.
\]

\end{example}

\section{{Monicity}}
\label{sec:monicity}

{The main results of this section are Theorem \ref{thm-characterization-monic-repres} and Theorem \ref{thm:range-sets-generate-sigma-alg-in-monic}, which describe when a $\Lambda$-projective representation is equivalent to one arising from the infinite path space of $\Lambda$. Theorem~\ref{thm-characterization-monic-repres} gives a  representation-theoretic description, while Theorem~\ref{thm:range-sets-generate-sigma-alg-in-monic} explains the equivalence at the level of the measure space $(X, \mu)$ underlying the $\Lambda$-projective representation.   In both cases, versions of these results were known for finite $k$-graphs, but Theorem~\ref{thm:range-sets-generate-sigma-alg-in-monic} is stronger than the previously established results, even in the finite case.  We conclude this section by providing a necessary condition for a $\Lambda$-projective representation to be monic in Proposition~\ref{prop:vertex-prop-implies-no-monic-repns}, and we discuss the relationship of this condition to the existence of cycles without entry and their higher-rank generalizations.}

We first introduce, given a representation of $C^*(\Lambda),$ 
an associated  projection-valued measure on $\Lambda^\infty,$ which will 
prove to be an invaluable tool in this section.

{Enumerate the vertices of the k-graph, say $\{v_i\}_{i\in \N}$. Recall that the $\sigma$-algebra $\mathcal{A}$ of the disjoint union
$\Lambda^\infty= \bigsqcup_{n \in \N} v_n \Lambda^\infty$ is defined to be
\[
\mathcal{A} = \Big\{  A\subseteq \Lambda^\infty\hbox{ such that } A\cap v_j\Lambda^\infty \hbox{ Borel, }\forall j\in \N\Big\}.
\]}

  \subsection{Projection-valued measures}
  \label{sec:Proj-valued-measures}
  
  Lemma~\ref{lem:kolmogorov-projection-valued-measures} is central to the proof of the following Proposition.
  
 \begin{prop} 
   \label{prop-proj-valued-measure-gen-case}
   Let $\Lambda$ be a row-finite $k$-graph with no sources.
   Given a representation $\pi: C^*(\Lambda ) \to \mathcal{H}$,   $\{\pi(s_\lambda)=t_\lambda\}_{\lambda\in\Lambda},$ of a $k$-graph $C^*$-algebra $C^*(\Lambda)$ on a separable Hilbert space $\mathcal{H}$, there is a unique regular projection-valued measure $P$ on the Borel $\sigma$-algebra  $\mathcal{B}_o(\Lambda^\infty)$ of the infinite path space $\Lambda^\infty$ which is defined on cylinder sets  by
  \begin{align}
   P(Z(\lambda)) &= t_\lambda t_\lambda^*  \quad\;\;\text{for all}\;\; \lambda \in \Lambda.
 \end{align}
 Moreover, the restriction $\pi$ of the representation $\{t_\lambda \}_{\lambda \in \Lambda}$ to the subalgebra $C_0(\Lambda^\infty)$ is given by 
   \begin{equation}
   \label{eq:integral-decomposition}
   \pi(f) = \int_{\Lambda^\infty} f(x) dP(x),\ \forall f \in C_0(\Lambda^\infty).
   \end{equation}
   \end{prop}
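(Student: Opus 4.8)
The plan is to construct $P$ in two stages. First I would define $P$ on the ring $\mathcal{S}$ of finite disjoint unions of cylinder sets (which is a ring, and an algebra when $\Lambda^0$ is finite, by Lemma~\ref{carnival}), verify that it is a bounded, countably additive projection-valued measure there, and then invoke the Berberian extension result, Lemma~\ref{lem:kolmogorov-projection-valued-measures}, to obtain a unique projection-valued measure on $\sigma(\mathcal{S})$. Since $\Lambda^\infty = \bigcup_n Z(v_n)$ is a countable union of cylinder sets, the generated $\sigma$-ring contains $\Lambda^\infty$ and hence equals the Borel $\sigma$-algebra $\mathcal{B}_o(\Lambda^\infty)$. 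The integral decomposition \eqref{eq:integral-decomposition} will be deduced at the end by comparing two $*$-homomorphisms on $C_0(\Lambda^\infty)$.

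To carry out the first stage, I would first note that each $t_\lambda t_\lambda^*$ is a projection, since (CK3) makes $t_\lambda$ a partial isometry. The heart of the argument is multiplicativity and additivity. Using \eqref{eq:CK4-2} together with $\lambda\alpha = \eta\beta$ for $(\alpha,\beta)\in\Lambda^{\min}(\lambda,\eta)$, one computes
\[
t_\lambda t_\lambda^* \, t_\eta t_\eta^* = t_\lambda \Big( \sum_{(\alpha,\beta)\in\Lambda^{\min}(\lambda,\eta)} t_\alpha t_\beta^* \Big) t_\eta^* = \sum_{(\alpha,\beta)\in\Lambda^{\min}(\lambda,\eta)} t_{\lambda\alpha} t_{\eta\beta}^* = \sum_{(\alpha,\beta)\in\Lambda^{\min}(\lambda,\eta)} t_{\lambda\alpha} t_{\lambda\alpha}^*,
\]
which, by part (1) of Lemma~\ref{carnival}, is precisely the value $P(Z(\lambda)\cap Z(\eta))$ should take. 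In particular, when $Z(\lambda)\cap Z(\eta) = \emptyset$ (equivalently $\Lambda^{\min}(\lambda,\eta) = \emptyset$) the two projections are orthogonal, which gives additivity of $P$ on disjoint cylinder sets. Consistency under refinement follows from (CK2), (CK4) at the vertex $s(\lambda)$, and (CK3): $\sum_{\mu\in s(\lambda)\Lambda^m} t_{\lambda\mu}t_{\lambda\mu}^* = t_\lambda t_{s(\lambda)} t_\lambda^* = t_\lambda t_\lambda^* t_\lambda t_\lambda^* = t_\lambda t_\lambda^*$. Combining these facts with Lemma~\ref{carnival}, I would set $P\big(\bigsqcup_i Z(\lambda_i)\big) := \sum_i t_{\lambda_i}t_{\lambda_i}^*$ and verify that this is well defined and finitely additive on $\mathcal{S}$ by passing to a common refinement of any two cylinder decompositions of the same set. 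I expect this well-definedness and compatibility check to be the main obstacle, as it requires carefully matching distinct cylinder decompositions; the displayed computation is exactly the tool that makes it go through.

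Next I would upgrade finite to countable additivity on $\mathcal{S}$: if an element of $\mathcal{S}$ is a countable disjoint union of cylinder sets, then since each cylinder set is compact and open and the element is itself compact (a finite union of compact sets), only finitely many terms are nonempty, so countable additivity collapses to finite additivity — the same compactness argument used in the proof of Theorem~\ref{thm:meas-from-eigenvect}. As each $P(S)\leq I$, the measure is bounded, so Lemma~\ref{lem:kolmogorov-projection-valued-measures} yields a unique projection-valued measure $P$ on $\mathcal{B}_o(\Lambda^\infty)$. Regularity follows because $\Lambda^\infty$ is a second-countable locally compact Hausdorff space (the cylinder sets form a countable basis) on which every finite Borel measure is regular; applying this to each scalar measure $A \mapsto \langle P(A)\xi, \xi\rangle$, which is finite since $\langle P(\Lambda^\infty)\xi,\xi\rangle \leq \|\xi\|^2$, gives regularity of $P$.

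Finally, for \eqref{eq:integral-decomposition} I would use the standard identification of $C_0(\Lambda^\infty)$ with the subalgebra $\overline{\operatorname{span}}\{s_\lambda s_\lambda^*\}$ of $C^*(\Lambda)$, under which $\chi_{Z(\lambda)} \leftrightarrow s_\lambda s_\lambda^*$. Both $f \mapsto \pi(f)$ and $f \mapsto \int_{\Lambda^\infty} f\, dP$ are bounded $*$-homomorphisms on $C_0(\Lambda^\infty)$, and they agree on each generator, since $\pi(\chi_{Z(\lambda)}) = t_\lambda t_\lambda^* = P(Z(\lambda)) = \int_{\Lambda^\infty} \chi_{Z(\lambda)}\, dP$. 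Because the cylinder sets are compact open and form a basis, the functions $\chi_{Z(\lambda)}$ span (using Lemma~\ref{carnival} for closure under products) a subalgebra that is dense in $C_0(\Lambda^\infty)$ by the Stone--Weierstrass theorem, so the two maps coincide on all of $C_0(\Lambda^\infty)$.
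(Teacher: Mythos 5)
Your proposal is correct, and its core -- setting $P(Z(\lambda)) = t_\lambda t_\lambda^*$, extending to finite disjoint unions via the Cuntz--Krieger relations, and invoking Lemma~\ref{lem:kolmogorov-projection-valued-measures} -- is the same as the paper's; but you organize both the extension and the final identification differently, and the differences are real. The paper first works on each compact piece $v\Lambda^\infty$, where the cylinder sets with range $v$ generate an \emph{algebra}, extends there, and then glues by setting $P(A) = \sum_j P(A \cap v_j\Lambda^\infty)$ (an SOT-convergent sum of orthogonal projections), with separate uniqueness and regularity checks for the glued measure. You instead apply Berberian's theorem once, to the \emph{ring} of finite disjoint unions of all cylinder sets, and note that $\Lambda^\infty = \bigcup_n Z(v_n)$ lies in the generated $\sigma$-ring, which is therefore the full Borel $\sigma$-algebra; this removes the gluing step entirely, and your explicit compactness argument for countable additivity on the ring (the same argument the paper uses in Theorem~\ref{thm:meas-from-eigenvect} for scalar measures) supplies a hypothesis of Berberian's theorem that the paper's own proof leaves implicit. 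Your well-definedness check is the paper's $\operatorname{MCE}$ computation repackaged: after refining two disjoint decompositions to cylinder sets of a single common degree $N$, equal-degree cylinder sets are equal or disjoint, so the two families of degree-$N$ paths coincide, and the identity $\sum_{\mu \in s(\lambda)\Lambda^m} t_{\lambda\mu}t_{\lambda\mu}^* = t_\lambda t_\lambda^*$ finishes it -- arguably cleaner than the paper's per-index maxima $m_i$ and symmetric-argument bookkeeping. Finally, for Equation~\eqref{eq:integral-decomposition} the paper follows Halmos, proving by hand that $A_{f_n} \to A_f$ in norm when simple functions $f_n$ converge uniformly to $f \in C_0(\Lambda^\infty)$, and comparing with $\pi(f_n) \to \pi(f)$; you instead observe that $f \mapsto \pi(f)$ and $f \mapsto \int_{\Lambda^\infty} f\,dP$ are $*$-homomorphisms on $C_0(\Lambda^\infty)$ agreeing on the functions $\chi_{Z(\lambda)}$, whose span is dense by Stone--Weierstrass. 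This is shorter, at the cost of quoting the standard fact that integration against a projection-valued measure is multiplicative, where the paper's argument is self-contained.
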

     
   \begin{proof} 
   First we will deal with the compact spectrum case by restricting to $v\Lambda^\infty$, where $v \in \Lambda^0$. 
   
   Denote by $\mathcal S$ the algebra of finite unions of cylinder sets  $\{ Z(\lambda): \lambda \in v\Lambda\}$. Given $A\in \mathcal S$,  write it as a finite union of disjoint cylinder sets (this can be done by Lemma~\ref{carnival}), say $A =  \bigsqcup_{i=1}^{n}  Z(\lambda_i)$, and define $P(A) = \sum_{i=1}^{n} t_{\lambda_i} t_{\lambda_i}^*$.  Without loss of generality, since $\Lambda$ is row-finite and source-free, we may assume that each $\lambda_i$ has the same degree.  Thus, (CK4) guarantees that the projections making up $P(A)$ are mutually orthogonal, so that $P(A)$ is a projection for any such $A$.

   Next we prove that $P$ is well defined (and hence a projection-valued measure on $\mathcal S$). 
Suppose that $A$ can be written in two ways as a finite disjoint union of cylinder sets, $A = \bigsqcup Z(\lambda_i)= \bigsqcup Z(\eta_j) $. Then for each fixed $i \in \N$ we have 
 \[Z(\lambda_i) = \bigsqcup_j Z(\lambda_i) \cap Z(\eta_j),\]
 and that 
 \[Z(\lambda_i) \cap Z(\eta_j) = \bigsqcup_{\zeta \in \text{MCE}(\lambda_i, \eta_j)} Z(\zeta).\]
 The fact that $\Lambda$ is row-finite ensures that $\text{MCE}(\lambda_i, \eta_j)$ is finite for all $i, j$.

Set $m_i = \bigvee_j d(\lambda_i) \vee d(\eta_j) \in \N^k$ to be the coordinate-wise maximum degree of all elements in $\{\text{MCE}(\lambda_i, \eta_j)\}_j$, and write $n_{i,j} = m_i - (d(\lambda_i) \vee d(\eta_j)) \in \N^k$.  Note that since $\bigsqcup_i Z(\lambda_i) = \bigsqcup_j Z(\eta_j)$, we in particular have that $Z(\lambda_i) \subseteq \bigsqcup_j Z(\eta_j)$.  Thus every $\alpha \in s(\lambda_i) \Lambda^{m_i - d(\lambda_i)}$  must satisfy $Z(\lambda_i \alpha) \subseteq \bigsqcup_j Z(\eta_j)$.  By the definition of $m_i$, we then have that  $\lambda_i \alpha= \zeta \beta$ for a unique $\zeta \in \text{MCE}(\lambda_i, \eta_j), \ \beta \in s(\zeta)\Lambda^{n_{i,j}}.$ Moreover, if $\zeta \in \text{MCE}(\lambda_i, \eta_j), \ \beta \in s(\zeta)\Lambda^{n_{i,j}}$ then the fact that $\zeta $ extends $\lambda_i$ and our choice of the degrees of $\zeta, \beta$ imply that $ \zeta \beta = \lambda_i \alpha$ for some $\alpha \in s(\lambda_i) \Lambda^{m_i - d(\lambda_i)}$.

Since $t_{\lambda}$ is a partial isometry for all $\lambda \in \Lambda$, it follows {by (CK3)}  that 
\[t_\lambda = t_\lambda t_\lambda^* t_\lambda = t_\lambda \pi (s_{s(\lambda)}) =t_\lambda \sum_{\beta \in s(\lambda)\Lambda^n} \pi(s_\beta s_\beta^*)  = \sum_{\beta \in s(\lambda)\Lambda^n} t_{\lambda \beta} t_\beta^*\]
for any $n \in \N^k$.  It now follows {by using (CK3), (CK4), and (CK2)} that
 \begin{align*}
 P(\bigsqcup_{i} Z(\lambda_i)) &= \sum_i t_{\lambda_i} t_{\lambda_i}^* = \sum_i t_{\lambda_i } \left(\sum_{\alpha \in s(\lambda_i)\Lambda^{m_i-d(\lambda_i)}} t_\alpha t_{\alpha}^* \right) t_{\lambda_i}^*  = \sum_i \sum_{\alpha \in s(\lambda_i)\Lambda^{m_i-d(\lambda_i)} } t_{\lambda_i \alpha }  t_{\lambda_i \alpha}^* \\
 &= \sum_i \sum_j \sum_{\zeta \in \text{MCE}(\lambda_i, \eta_j)} \sum_{\beta \in s(\zeta) \Lambda^{n_{i,j}}} t_{\zeta\beta} t_{\zeta\beta}^*  = \sum_i \sum_j \sum_{\zeta \in \text{MCE}(\lambda_i, \eta_j)} t_\zeta t_\zeta^*
.
 \end{align*}
 Since all of the sums in question are finite, we can rearrange the order of summation; then, by a symmetric argument to the one given above (replacing $m_i$ with $n_j = \bigvee_i d(\eta_j) \vee d(\lambda_i)$ and $\alpha \in s(\lambda_i) \Lambda^{m_i- d(\lambda_i)}$ with $\gamma \in s(\eta_j) \Lambda^{n_j - d(\eta_j)}$) 
 we see that 
 \[ P(\bigsqcup_i Z(\lambda_i) ) = \sum_j \sum_i \sum_{\zeta \in \text{MCE}(\lambda_i, \eta_j)} t_\zeta t_\zeta^* = \sum_j t_{\eta_j} t_{\eta_j}^* = P(\bigsqcup_j Z(\eta_j)).\]

 By Lemma~\ref{lem:kolmogorov-projection-valued-measures} $P$ extends to a unique measure on the Borel $\sigma$-algebra of $v\Lambda^{\infty}$.
 Now enumerate the vertices of $\Lambda$, say $\{v_i\}_{i\in \N}$. Recall that the $\sigma$-algebra $\mathcal{A}$ of the disjoint union
   $\Lambda^\infty= \bigsqcup_{n \in \N} v_n \Lambda^\infty$ is defined to be
   \[
  \mathcal{A} = \Big\{  A\subseteq \Lambda^\infty\hbox{ such that } A\cap v_j\Lambda^\infty \hbox{ Borel, }\forall j\in \N\Big\}.
   \]
   Define the wanted  projection-valued measure  $P$ on $ \Lambda^\infty$ by
   
   \begin{equation}
   \label{eq:def-proj-val-meas-disj-union}
   P(A )= \sum_j P(A\cap v_j\Lambda^\infty ) .
   \end{equation}
   Note that  $P(A )= \sum_j P(A\cap v_j\Lambda^\infty )$  converges in the weak or strong operator topologies.
   
   Moreover, if  we take a compact set $C \subseteq \Lambda^\infty$ Borel, then  $C  $ is included in a finite union
   of the sets $v_j\Lambda^\infty,$ and so the projection-valued measure defined in Equation 
    \eqref{eq:def-proj-val-meas-disj-union}, when evaluated at $C$, coincides with the projection-valued measure  coming from taking disjoint unions of  finitely many   $v_j\Lambda^\infty $ in this case.
       The uniqueness of the measure follows from the uniqueness of the extension to  each $ \mathcal{B}_o(v_n \Lambda^\infty),$ and from the fact that every operator on $L^2(\Lambda^\infty)$ must be of the form given in Equation  \eqref{eq:def-proj-val-meas-disj-union}, since the spaces $L^2(v_n \Lambda^\infty)$ are orthogonal.   {The regularity of the measure $P$ follows from the fact that $\Lambda^\infty$ is $\sigma$-compact and metrizable, so all Borel sets are Baire sets.  Consequently, \cite[Theorem 18]{berberian} implies that $P$ is regular.}

    Next we show the integral description of the representation. Following \cite[Section 37]{Halmos}, define 
    {for $f$ bounded}
\[ \int_{\Lambda^\infty} f(x) dP(x) := A_f,\]
   where $A_f \in B(\H)$ is the unique operator such that for all $\xi, \zeta \in \H$, $\langle A_f \xi, \zeta \rangle = \int_{\Lambda^\infty} f(x) dP_{\xi, \zeta}(x)$, for the complex measure $P_{\xi,\zeta}$ defined by 
   \begin{equation}
   \label{eq:def-proj-valued-measure-via-measures}
   P_{\xi,\zeta}(Z(\lambda)) = \langle t_\lambda t_\lambda^* \xi, \zeta \rangle.
   \end{equation}
 {Note that  our definition  of $A_f$ ensures that for all $\gamma \in \Lambda,$
   \[ A_{\chi_{Z(\gamma)}} = \pi(\chi_{Z(\gamma)} ).\]}
 Suppose $f_n \to f \in C_0(\Lambda^\infty)$ and $f_n = \sum_{i=1}^{k_n} \alpha_i^n \chi_{Z(\lambda_i)}$, where $Z(\lambda_i) \cap Z(\lambda_j) = \emptyset$ for all $i \not= j$. Then for all $\epsilon > 0$, there is $N \in \N$ such that if $n \geq N$ we have $\sup \{ |f(x)|: x \not\in \bigcup_{i=1}^{k_n} Z(\lambda_i)\} < \epsilon\}$ and $
 \sup \{ |f(x) - \alpha_i^n|: x \in Z(\lambda_i) \} < \epsilon$, and so 
 \[ |\langle( A_f - A_{f_n})\xi, \zeta \rangle | = \left| \int_{\Lambda^\infty} (f- f_n )dP_{\xi, \zeta}\right| \leq \epsilon | \langle \xi, \zeta\rangle |.\]

{Since our  construction of $f_n$ ensures that 
$ A_{f_n} = \pi(f_n)$, the above inequality becomes }
\[ | \langle (A_f - \pi(f_n)) \xi, \zeta \rangle | \leq \epsilon | \langle \xi, \zeta \rangle |\]
for all $\xi, \zeta \in \H$ and all $n \geq N$.  It follows that
the sequence $(\pi(f_n))_{n\in \N}$ converges to $A_f$ in norm: if $n\geq N$,

\begin{align*}
\| A_f - \pi(f_n)\|^2& =\| A_f - A_{f_n}\|^2  = \sup_{\| \xi\| = 1} \| (A_f - A_{f_n})\xi\|^2 = \sup_{\| \xi\| =1} \langle (A_f - A_{f_n}) \xi, (A_f - A_{f_n})\xi\rangle  \\
& \leq \sup_{\| \xi \| = 1} \sup \left\{ | \langle (A_f - A_{f_n})\xi, \zeta\rangle | : \| \zeta\| = \|  (A_f - A_{f_n})\xi \| \right\} \\
&\leq \sup_{\| \xi\| =1} \sup \{ \epsilon | \langle \xi, \zeta \rangle | : \|\zeta\| = \|  (A_f - A_{f_n})\xi \| \} \leq \sup_{\| \xi\| = 1} \epsilon \| \xi\| \, \| (A_f - A_{f_n})\xi \| \\
& = \epsilon \| A_f - A_{f_n}\| ,
\end{align*}
and consequently $\| A_f - A_{f_n}\|  \leq \epsilon$ for large enough $n$.

On the other hand, the continuity of $\pi$ implies that $\pi(f_n) \to \pi(f)$.  It follows that 
\[ A_f = \int_{\Lambda^\infty} f(x) dP(x) = \pi(f),\]
as claimed.
\end{proof}

\begin{rmk}
In \cite[Proposition 3.8]{FGJKP-monic} the existence of the projection valued measure $P$ was proved for finite $k$-graphs using the Kolmogorov extension theorem. 
Proposition \ref{prop-proj-valued-measure-gen-case} above gives an alternative approach to the proof of the existence of this projection valued measure. 
\end{rmk}

  We now record some properties of the projection-valued measure associated to a representation of $C^*(\Lambda)$.  The proofs are very similar to the proofs recorded in \cite{FGJKP-monic} in the case of finite $k$-graphs.

   \begin{prop} (\cite[Proposition 3.9 and Definition 4.1]{FGJKP-monic})
   \label{prop-atomic-basic-equns} Let $\Lambda$ be a row-finite, source-free $k$-graph, and fix a representation $\{t_\lambda:\lambda \in \Lambda\}$ of $C^*(\Lambda)$.
   \begin{itemize}\label{prop:pvm-properties}
   \item[(a)] For $\lambda, \eta \in\Lambda$ with $s(\lambda)=r(\eta)$, we have $t_\lambda P(Z(\eta)) t_\lambda^*=P(\sigma_\lambda(Z(\eta))) = P(Z(\lambda \eta))$; 
   
   \item[(b)] For any fixed $n \in \N^k$, we have
    \[
    \sum_{\lambda \in  s(\eta) \Lambda^n} t_\lambda P(\sigma_\lambda^{-1}(Z(\eta))) t_\lambda^* = P(Z(\eta)) ;
    \]
    \item[(c)]  For any $\lambda,\eta \in \Lambda$ with $r(\lambda)=r(\eta)$, we have $t_\lambda P(\sigma_\lambda^{-1}(Z(\eta))) = P(Z(\eta)) t_\lambda$;
    \item[(d)] When $\lambda\in\Lambda^n$, we have $t_\lambda P( Z(\eta)) =  P((\sigma^n)^{-1}(Z(\eta))) t_\lambda$.
   \end{itemize}
   \end{prop}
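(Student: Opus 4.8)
The plan is to verify each of the four identities on the cylinder sets appearing in the statement, using only the Cuntz--Krieger relations (CK1)--(CK4), the commutation formula \eqref{eq:CK4-2}, and the explicit decompositions of intersections and preimages of cylinder sets from Lemma \ref{carnival}. Part (a) is immediate: since $P(Z(\eta)) = t_\eta t_\eta^*$ and $s(\lambda) = r(\eta)$, relation (CK2) gives $t_\lambda P(Z(\eta)) t_\lambda^* = t_\lambda t_\eta t_\eta^* t_\lambda^* = t_{\lambda\eta} t_{\lambda\eta}^* = P(Z(\lambda\eta))$, while $Z(\lambda\eta) = \sigma_\lambda(Z(\eta))$ follows directly from the definition of the prefixing map. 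The real content therefore lies, for parts (c) and (d), in identifying the preimage sets $\sigma_\lambda^{-1}(Z(\eta))$ and $(\sigma^n)^{-1}(Z(\eta))$ as disjoint unions of cylinder sets and then matching the resulting operator sums.

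For part (c), I would first observe that, since $\sigma_\lambda(x) = \lambda x$, a path $x \in Z(s(\lambda))$ lies in $\sigma_\lambda^{-1}(Z(\eta))$ exactly when $\lambda x \in Z(\lambda) \cap Z(\eta)$. Using the $\MCE$ decomposition of $Z(\lambda)\cap Z(\eta)$ from Lemma \ref{carnival} and chopping off the common prefix $\lambda$, this yields $\sigma_\lambda^{-1}(Z(\eta)) = \bigsqcup_{(\alpha,\beta)\in\Lambda^{\min}(\lambda,\eta)} Z(\alpha)$, a \emph{finite} disjoint union by row-finiteness; hence $t_\lambda P(\sigma_\lambda^{-1}(Z(\eta))) = \sum_{(\alpha,\beta)} t_{\lambda\alpha} t_\alpha^*$. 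On the other side, expanding $t_\eta^* t_\lambda$ via \eqref{eq:CK4-2} and using the bijection $(\alpha,\beta) \leftrightarrow (\beta,\alpha)$ between $\Lambda^{\min}(\lambda,\eta)$ and $\Lambda^{\min}(\eta,\lambda)$ together with the identity $\eta\beta = \lambda\alpha$, I obtain $P(Z(\eta)) t_\lambda = t_\eta \sum_{(\alpha,\beta)} t_\beta t_\alpha^* = \sum_{(\alpha,\beta)} t_{\lambda\alpha} t_\alpha^*$, which matches. Part (b) then follows painlessly: compressing the identity of (c) by $t_\lambda^*$ gives $t_\lambda P(\sigma_\lambda^{-1}(Z(\eta))) t_\lambda^* = P(Z(\eta)) t_\lambda t_\lambda^* = P(Z(\eta)) P(Z(\lambda))$, and summing over the degree-$n$ paths $\lambda$ whose ranges $Z(\lambda)$ tile $Z(r(\eta))$ (i.e.\ $\lambda \in r(\eta)\Lambda^n$) turns the right-hand side into $P(Z(\eta)) \sum_\lambda t_\lambda t_\lambda^* = P(Z(\eta)) t_{r(\eta)} = P(Z(\eta))$ by (CK4).

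For part (d), I would decompose $(\sigma^n)^{-1}(Z(\eta)) = \bigsqcup_{\mu\in\Lambda^n,\, s(\mu)=r(\eta)} Z(\mu\eta)$, so that $P((\sigma^n)^{-1}(Z(\eta))) = \sum_\mu t_{\mu\eta} t_{\mu\eta}^*$ is an SOT-convergent (possibly infinite) sum. Right-multiplying by $t_\lambda$ for $\lambda\in\Lambda^n$ and invoking the orthogonality $t_\mu^* t_\lambda = \delta_{\mu,\lambda}\, t_{s(\lambda)}$ — a consequence of \eqref{eq:CK4-2} for paths of equal degree, since $\Lambda^{\min}(\mu,\lambda)$ is nonempty only when $\mu=\lambda$ — isolates the single term $\mu=\lambda$, giving $P((\sigma^n)^{-1}(Z(\eta))) t_\lambda = t_{\lambda\eta} t_\eta^*$ when $s(\lambda)=r(\eta)$ and $0$ otherwise. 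This agrees with $t_\lambda P(Z(\eta)) = t_\lambda t_\eta t_\eta^* = t_{\lambda\eta} t_\eta^*$ computed via (CK2).

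I expect the principal obstacle to be bookkeeping with the source-side sums that arise in the row-finite (rather than finite) setting: one must confirm that $\Lambda^{\min}(\lambda,\eta)$ and $\MCE(\lambda,\eta)$ remain finite (guaranteed by row-finiteness), that the preimage decompositions above are correct, and — crucially in (d) — that interchanging the SOT-convergent sum with multiplication by the bounded operator $t_\lambda$ is legitimate, which holds by SOT-continuity of multiplication. Once these identities are established on the cylinder sets named in the statement, no further passage to general Borel sets is required, since each preimage set is already an explicit disjoint union of cylinders on which $P$ is pinned down by countable additivity.
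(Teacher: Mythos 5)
Your proof is correct, and it supplies details the paper itself omits: for this proposition the paper gives no argument at all, deferring to \cite[Proposition 3.9]{FGJKP-monic} with the remark that the finite-graph proofs carry over verbatim. Your route --- (CK1)--(CK4) together with Equation \eqref{eq:CK4-2} and the cylinder-set decompositions of Lemma \ref{carnival} --- is exactly the kind of computation being invoked there, and you correctly isolate the two genuinely row-finite points: $\Lambda^{\min}(\lambda,\eta)$ is finite because its elements are indexed by paths in $r(\lambda)\Lambda^{d(\lambda)\vee d(\eta)}$, while in (d) the decomposition $(\sigma^n)^{-1}(Z(\eta))=\bigsqcup\{Z(\mu\eta):\mu\in\Lambda^n,\ s(\mu)=r(\eta)\}$ may be countably infinite, so $P$ of it is an SOT-convergent sum of mutually orthogonal projections, through which right multiplication by the fixed bounded operator $t_\lambda$ passes by SOT-continuity. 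The identifications $\sigma_\lambda^{-1}(Z(\eta))=\bigsqcup_{(\alpha,\beta)\in\Lambda^{\min}(\lambda,\eta)}Z(\alpha)$ and $t_\mu^*t_\lambda=\delta_{\mu,\lambda}\,t_{s(\lambda)}$ for $\mu,\lambda\in\Lambda^n$ are both right, as is deducing (b) by compressing (c) with $t_\lambda^*$ and invoking (CK4).

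One discrepancy should be stated rather than silently repaired. In part (b) the proposition sums over $\lambda\in s(\eta)\Lambda^n$, whereas your proof sums over $\lambda\in r(\eta)\Lambda^n$. The statement as literally written is false whenever $s(\eta)\neq r(\eta)$ and $t_\eta\neq 0$: for $r(\lambda)=s(\eta)$ one has $Z(\lambda)\subseteq Z(s(\eta))$, which is disjoint from $Z(\eta)\subseteq Z(r(\eta))$, so every $\sigma_\lambda^{-1}(Z(\eta))$ is empty and the left-hand side is $0\neq P(Z(\eta))$. Thus $s(\eta)\Lambda^n$ is evidently a typo for $r(\eta)\Lambda^n$ (equivalently one may sum over all of $\Lambda^n$, as in the finite-graph source, since the terms with $r(\lambda)\neq r(\eta)$ vanish and only finitely many survive by row-finiteness); your argument proves this corrected statement, which is the one actually needed, but a careful write-up should say explicitly that this is what it does.
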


  \subsection{Monic representations}
\label{sec:monic-representations}  
  We now observe that the definition of 
  monic representation from \cite{FGJKP-monic}, originally given in the context of finite $k$-graphs,  also makes sense for row-finite graphs. 
  
  \begin{defn}\label{def:lambda-monic-repres}(cf.~\cite[Definition 4.1]{FGJKP-monic})
   Let $\Lambda$ be a  row-finite $k$-graph with no sources.
    A representation $\{ t_\lambda\,:\, \lambda\in\Lambda\}$ of $\Lambda$ on a Hilbert space $\H$ is called \emph{monic} if $t_\lambda \not= 0$ for all $\lambda \in \Lambda$, and there exists a vector $\xi\in \mathcal{H}$ such that
   \[
   \overline{\text{span}}\{ t_\lambda t_\lambda^* \xi : \lambda \in \Lambda\} = \mathcal{H}.
   \]
  We say that such a vector $\xi$ is a {\em monic} vector for the representation.
   \end{defn}
   Notice that a monic vector for $\{ t_\lambda \}_\lambda$ is a  cyclic vector for the restriction $\pi$ of the representation generated by $\{t_\lambda\}_{\lambda \in \Lambda}$ to $C_0(\Lambda^\infty)$.
We  therefore obtain  a Borel measure $\mu_\pi$ on $\Lambda^\infty$ given by
\begin{equation}\label{eq:monic_measure}
\mu_\pi(Z(\lambda))=\langle \xi, P(Z(\lambda))\xi\rangle=\langle \xi, t_\lambda t^*_\lambda\xi \rangle.
\end{equation}

An important feature of a monic vector is that its support must have full measure (cf. \cite[Example 4.7]{FGJKP-monic}). 

\begin{prop}\label{supportcyclic} Let $(X, \mu)$ be a $\sigma$-finite measure space.  If a $\Lambda$-projective representation of $C^*(\Lambda)$ on $L^2(X, \mu)$ is monic then the support of  any of its monic vectors $\xi$ can differ from $X$ by at most a set of measure 0.
\end{prop}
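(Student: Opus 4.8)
The plan is to exploit the fact, recorded in Remark~\ref{rmk:Lambda-proj-is-mult}, that for a $\Lambda$-projective representation each range projection acts as a multiplication operator: $T_\lambda T_\lambda^* = M_{\chi_{R_\lambda}}$. Writing $S = \supp(\xi)$, the key observation is then immediate: for every $\lambda \in \Lambda$ the spanning vector
\[
T_\lambda T_\lambda^* \xi = \chi_{R_\lambda} \cdot \xi
\]
vanishes $\mu$-almost everywhere on $X \setminus S$, since it is a pointwise product with $\xi$. Thus each of the vectors generating the monic span lies in the subspace
\[
L^2(S, \mu) := \{ f \in L^2(X, \mu) : f = 0 \ \mu\text{-a.e.~on } X \setminus S\},
\]
which is a \emph{closed} subspace of $L^2(X, \mu)$.

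Because $L^2(S,\mu)$ is closed, it contains the closed span of the spanning vectors; that is,
\[
\overline{\operatorname{span}}\{ T_\lambda T_\lambda^* \xi : \lambda \in \Lambda\} \subseteq L^2(S, \mu).
\]
The monicity hypothesis says the left-hand side is all of $L^2(X, \mu)$, so I obtain $L^2(X,\mu) = L^2(S,\mu)$.

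It remains to deduce $\mu(X \setminus S) = 0$ from this identity of $L^2$-spaces, and this is the only place where $\sigma$-finiteness enters. I would argue by contradiction: if $\mu(X \setminus S) > 0$, then writing $X = \bigcup_n X_n$ with $\mu(X_n) < \infty$, at least one set $A := X_n \cap (X \setminus S)$ satisfies $0 < \mu(A) < \infty$. The characteristic function $\chi_A$ then belongs to $L^2(X,\mu)$ yet is nonzero on a positive-measure subset of $X \setminus S$, so $\chi_A \notin L^2(S,\mu)$, contradicting $L^2(X,\mu) = L^2(S,\mu)$. Hence $\mu(X \setminus S) = 0$, i.e.\ $\supp(\xi)$ differs from $X$ by a null set.

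The argument is short and structural rather than computational; the main (and only real) subtlety is the final step, where one must ensure that a positive-measure complement actually produces a genuine $L^2$ obstruction. This is exactly what the $\sigma$-finiteness of $\mu$ guarantees by letting us pass to a finite-measure test set $A$, and I would flag that this hypothesis is what makes the otherwise transparent span-containment argument conclusive.
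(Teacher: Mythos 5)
Your proof is correct and follows essentially the same route as the paper's: both arguments use the identity $T_\lambda T_\lambda^* = M_{\chi_{R_\lambda}}$ from Remark~\ref{rmk:Lambda-proj-is-mult} to see that every vector in $\overline{\operatorname{span}}\{T_\lambda T_\lambda^*\xi\}$ is supported in $\operatorname{supp}(\xi)$, and then invoke $\sigma$-finiteness to produce a finite-positive-measure set $A \subseteq X \setminus \operatorname{supp}(\xi)$ whose characteristic function lies in $L^2(X,\mu)$ but not in the closed span, contradicting monicity. Your phrasing via the closed subspace $L^2(S,\mu)$ is just a slightly more structural packaging of the identical argument.
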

\begin{proof}
 Recall that if $\{t_\lambda: \lambda \in \Lambda\}$ is a $\Lambda$-projective representation, then $t_\lambda t_\lambda^* = M_{\chi_{R_\lambda}}$.  Moreover, 
  the support of any function in $\overline{\text{span}} \{ \chi_{R_\lambda} \xi: \lambda \in \Lambda \} $ is contained in the support of $\xi$. So, if $X = \text{supp}(\xi) \sqcup S$ for a set $S$ with positive measure,   the function $\chi_A$, where $A \subseteq S$ is any measurable set of finite positive measure in the complement of the support of $\xi$, 
is not in $\overline{\text{span}} \{ \chi_{R_\lambda} \xi: \lambda \in \Lambda \} $. Consequently, $\{t_\lambda\}_\lambda$ cannot be a monic $\Lambda$-projective representation if $S = X \backslash \text{supp}(\xi)$ has positive measure.
\end{proof}

Note that  the hypothesis that $(X, \mu)$ be $\sigma$-finite is necessary to guarantee the existence of  a set $A$ as in the above proof. If we assume $\mu(R_v) < \infty$ for all $v$ then $\sigma$-finiteness is automatic.

The following theorem  is the main result of this section, which generalizes the finite case given in \cite[Theorem 4.2]{FGJKP-monic}. The condition  $\mu(Z(v_n)) < \infty$ for all $n$ in the second part of the theorem below cannot be removed, see Remark \ref{rmk:finite-vertex-condition}.

  \begin{thm}
  \label{thm-characterization-monic-repres} 
  Let $\Lambda$ be a row-finite  $k$-graph with no sources.  
  If $\pi: C^*(\Lambda) \to \mathcal{H}$, with $\pi(s_\lambda)=t_\lambda$,  is a monic representation of $C^*(\Lambda)$ on a Hilbert space $\H$,
  then $\{t_\lambda\}_{\lambda\in \Lambda}$ is unitarily equivalent to a representation $\{S_\lambda\}_{\lambda\in \Lambda}$ associated to a $\Lambda$-projective system on $(\Lambda^\infty, \mu_\pi)$ associated to the standard coding and prefixing maps $\sigma^n, \sigma_\lambda$ of Definition \ref{def:infinite-path}.
  
  Conversely, if we have a representation $\pi$ of $C^*(\Lambda)$ on $L^2(\Lambda^\infty, \mu)$ with $\mu(Z(v)) < \infty$ for all $v\in \Lambda^0$, which arises from a $\Lambda$-projective system associated to the standard coding and prefixing maps $\sigma^n, \sigma_\lambda,$  then the representation is monic, and  $L^2(\Lambda^\infty,\mu)$ is isometric with $L^2(\Lambda^\infty,\mu_\pi)$. 
  \end{thm}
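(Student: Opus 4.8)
The plan is to prove the two directions separately, using as the central tool the projection-valued measure $P$ of Proposition \ref{prop-proj-valued-measure-gen-case} together with its integral representation $\pi(f)=\int_{\Lambda^\infty}f\,dP$ of $\pi|_{C_0(\Lambda^\infty)}$; this is exactly what lets the compact-space argument of \cite[Theorem 4.2]{FGJKP-monic} be carried over to the merely $\sigma$-compact infinite path space of a row-finite $k$-graph. For the forward direction, let $\xi$ be a monic vector, so $\mu_\pi(Z(\lambda))=\langle\xi,t_\lambda t_\lambda^*\xi\rangle=\|t_\lambda^*\xi\|^2$. I would first record two consequences of monicity: since the $t_v$ are orthogonal projections summing strongly to $1$, $\mu_\pi$ is a finite regular measure of mass $\|\xi\|^2$; and $\mu_\pi(Z(\lambda))>0$ for every $\lambda$. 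For the latter, if $P(Z(\lambda))\xi=0$ then, because $Z(\lambda)\cap Z(\eta)$ is a finite union of cylinders extending $\lambda$ (Lemma \ref{carnival}), one gets $P(Z(\lambda))P(Z(\eta))\xi=0$ for all $\eta$, so $P(Z(\lambda))\mathcal{H}$ is orthogonal to every $t_\eta t_\eta^*\xi$; monicity then forces $t_\lambda t_\lambda^*=0$, contradicting $t_\lambda\ne0$. Cyclicity of $\xi$ for $\pi|_{C_0(\Lambda^\infty)}$ and the identity $\|\pi(f)\xi\|^2=\int|f|^2\,d\mu_\pi$ then produce a unitary $U\colon\mathcal{H}\to L^2(\Lambda^\infty,\mu_\pi)$ intertwining $\pi|_{C_0}$ with multiplication.

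\textbf{Forward direction, the projective form.} Writing $T_\lambda:=Ut_\lambda U^*$, the relations of Proposition \ref{prop-atomic-basic-equns} transport to $T_\lambda T_\lambda^*=M_{\chi_{Z(\lambda)}}$, $T_\lambda^*T_\lambda=M_{\chi_{Z(s(\lambda))}}$, and the intertwining relation $T_\lambda M_g=M_{g\circ\sigma^{d(\lambda)}}T_\lambda$ coming from part (d). Setting $f_\lambda:=T_\lambda\chi_{Z(s(\lambda))}$ and feeding $g\,\chi_{Z(s(\lambda))}$ through this last relation yields $T_\lambda g=f_\lambda\cdot(g\circ\sigma^{d(\lambda)})$, exactly the projective form \eqref{eq:T-lambda}.

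\textbf{The main obstacle.} It remains to check that the standard maps $\sigma_\lambda,\sigma^n$ form a $\Lambda$-semibranching function system on $(\Lambda^\infty,\mu_\pi)$ and that the $f_\lambda$ satisfy Definition \ref{def:lambda-proj-system}(a); Proposition \ref{prop:lambda-proj-repn} will then certify that we have a genuine $\Lambda$-projective system, and $U$ is the required equivalence (the $T_\lambda$ form a Cuntz--Krieger family, being conjugate to the $t_\lambda$, and are nonzero since $t_\lambda\ne0$). The covering and disjointness axioms are immediate from the cylinder structure, and the domains $Z(s(\lambda))$ have positive finite measure by the first paragraph. The delicate point, and where I expect the real work to lie, is the absolute continuity and strict positivity of the Radon--Nikodym derivatives. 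I would extract these from $T_\lambda$ itself: since $T_\lambda$ restricts to an isometry of $L^2(Z(s(\lambda)),\mu_\pi)$ onto $L^2(Z(\lambda),\mu_\pi)$, and $\sigma^{d(\lambda)}$ is a Borel bijection $Z(\lambda)\to Z(s(\lambda))$ with inverse $\sigma_\lambda$, comparing $\|g\|^2$ with $\int_{Z(\lambda)}|f_\lambda|^2\,|g\circ\sigma^{d(\lambda)}|^2\,d\mu_\pi$ identifies $|f_\lambda|^2$ with $d(\mu_\pi\circ\sigma_\lambda^{-1})/d\mu_\pi$, giving Definition \ref{def:lambda-proj-system}(a); surjectivity of $T_\lambda$ onto $L^2(Z(\lambda),\mu_\pi)$ forces $f_\lambda\ne0$ a.e.\ on $Z(\lambda)$ (otherwise $\chi_{\{f_\lambda=0\}}$ could not lie in the range), which simultaneously yields the strict positivity of $\Phi_\lambda=d(\mu_\pi\circ\sigma_\lambda)/d\mu_\pi$ demanded by the semibranching axioms.

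\textbf{Converse.} Here I would first produce a monic vector by hand. Using $\mu(Z(v_n))<\infty$, choose $c_n>0$ with $\sum_n c_n^2\,\mu(Z(v_n))<\infty$ and set $\xi=\sum_n c_n\chi_{Z(v_n)}$, a full-support vector of $L^2(\Lambda^\infty,\mu)$. Since $t_\lambda t_\lambda^*=M_{\chi_{Z(\lambda)}}$ (Remark \ref{rmk:Lambda-proj-is-mult}) and the cylinder indicators span, by Lemma \ref{carnival}, a point-separating, nowhere-vanishing $*$-subalgebra of $C_0(\Lambda^\infty)$, a Stone--Weierstrass argument gives $\overline{\operatorname{span}}\{\chi_{Z(\lambda)}\xi\}=\overline{C_0(\Lambda^\infty)\,\xi}=L^2(\Lambda^\infty,\mu)$, the last equality because $\xi$ has full support; hence the representation is monic. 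Finally, $\mu_\pi(Z(\lambda))=\langle\xi,\chi_{Z(\lambda)}\xi\rangle=\int_{Z(\lambda)}|\xi|^2\,d\mu$, so $\mu_\pi=|\xi|^2\,\mu$ by the uniqueness in Theorem \ref{thm:Cara-extension}; as $\xi$ has full support (automatic for any monic vector by Proposition \ref{supportcyclic}), the map $g\mapsto g/\xi$ is a well-defined isometric isomorphism $L^2(\Lambda^\infty,\mu)\to L^2(\Lambda^\infty,\mu_\pi)$, completing the proof.
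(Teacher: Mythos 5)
Your proposal is correct and follows essentially the same route as the paper: the paper's forward direction is only a sketch deferring to \cite[Theorem 4.2]{FGJKP-monic}, and what you have written out (projection-valued measure from Proposition \ref{prop-proj-valued-measure-gen-case}, unitary from the monic vector, transported operators in projective form, Radon--Nikodym verification certified by Proposition \ref{prop:lambda-proj-repn}) is precisely that argument carried out in full detail. In the converse you use the same vertex-weighted vector as the paper; your Stone--Weierstrass/full-support route to cyclicity and the explicit isometry $g \mapsto g/\xi$ merely replace the paper's direct computation $\pi\bigl(n\sqrt{\mu(Z(v_n))}\,\chi_{Z(\lambda)}\bigr)\xi = \chi_{Z(\lambda)}$ and its appeal to the first part of the theorem.
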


  \begin{proof} The proof is very similar to the proof of \cite[Theorem 4.2]{FGJKP-monic}, and because of that we will only give a quick sketch of it, which highlights the differences between the finite and row-finite cases. {The construction of a $\Lambda$-projective system on $\Lambda^\infty$ from a monic representation $\pi$ proceeds exactly as in \cite[Theorem 4.2]{FGJKP-monic}.} First, one defines the measure $\mu_\pi$ on cylinder sets  $Z(\lambda) \subseteq \Lambda^\infty$  by

  \[ \mu_\pi(Z(\lambda)) := \langle \xi, \pi(\chi_{Z(\lambda)}) \xi \rangle,\]
  where $\xi$ is the cyclic vector for $\pi$.  We use Carathe\'eodory's theorem to extend $\mu_\pi$ to a measure on $\Lambda^\infty$.
   Then the proof of \cite[Theorem 4.2]{FGJKP-monic} 
can be followed to construct a $\Lambda$-projective system $\{ f_\lambda\}_{\lambda \in \Lambda}$ associated to  the usual coding and prefixing maps $\{\sigma^n, \sigma_\lambda\}_{n, \lambda}$ on $\Lambda^\infty$, such that the affiliated $\Lambda$-projective representation is immediately seen to be unitarily equivalent to $\pi$.

 For the converse, suppose that we have a representation $\pi$ of $C^*(\Lambda)$ on $L^2(\Lambda^\infty, \mu)$ which arises from a $\Lambda$-projective system.  Then, in particular, 
 
 \[\pi(s_\lambda s_\lambda^*) = M_{\chi_{Z(\lambda)}}.\]
 
 Enumerate the vertices in $\Lambda^0$ as $\{ v_n\}_{n\in \N}$ and define 
\[\xi = \sum_{v_n \in \Lambda^0} \frac{\chi_{Z(v_n)}}{n \sqrt{\mu(Z(v_n))}} \in L^2(\Lambda^\infty, \mu).\]
(Note that $\mu(Z(v)) > 0$ for all $v$ by Condition (b) of Definition \ref{def-lambda-SBFS-1}.)
Then $\xi$ is cyclic for $C_0(\Lambda^\infty)$. To see this, it suffices to show that $\chi_{Z(\lambda)} \in \pi( C_0(\Lambda^\infty) )( \xi)$ for all $\lambda \in \Lambda$. Given $\lambda \in \Lambda$ with $r(\lambda) = v_n$, we compute:
\[ \pi (n \sqrt{\mu(Z(v_n))} \chi_{Z(\lambda)}) \xi = n \sqrt{\mu(Z(v_n))} \pi(s_\lambda s_\lambda^*) \xi = \chi_{Z(\lambda)},\]
since $Z(\lambda) \cap Z(v) \not= \emptyset$ only when $v = r(\lambda)$.
The fact that $L^2(\Lambda^\infty,\mu)$ is isometric with $L^2(\Lambda^\infty,\mu_\pi)$ now follows from the first part of the proof. 
  	\end{proof}

 \begin{rmk}
 \label{rmk:finite-vertex-condition}
{\color{white},}

\begin{enumerate}
\item  In the proof of the converse direction above, if $\mu(\Lambda^\infty) < \infty$ then we could alternatively take $\xi = \chi_{\Lambda^\infty}$ to be our cyclic vector.

\item 
  As shown in the proof above,  the measure $\mu_\pi$ on $\Lambda^\infty$ associated to a   monic representation $\pi: C^*(\Lambda) \to \mathcal{H}$ with monic vector $\xi$  must satisfy
\[
  \mu_\pi (Z(\lambda) ) =\langle \xi,  \pi(s_\lambda s_\lambda^*)\xi\rangle_{\mathcal{H}} \,  < \infty,\ \forall \lambda \in \Lambda.
  \]
 Additionally, $\mathcal{H}$ is isometric to $L^2(\Lambda^\infty, \mu_\pi)$ (see the proof of \cite[Theorem 4.2]{FGJKP-monic}).  If in particular  $\mathcal{H} = L^2(\Lambda^\infty,\mu) $, then  $L^2(\Lambda^\infty,\mu)$ and $L^2(\Lambda^\infty,\mu_\pi)$ are  isometric, and so    $ \mu (Z(\lambda) )=\mu_\pi (Z(\lambda) )  < \infty,$ $\forall \lambda \in \Lambda \iff \mu (Z(v) ) = \mu_\pi (Z(v) )  < \infty$. This shows that   $\mu (Z(v) ) = \mu_\pi (Z(v) )  < \infty$ is  a necessary condition for the monicity of a representation $\pi: C^*(\Lambda ) \to L^2(\Lambda^\infty,\mu) $.
 \end{enumerate}
 \end{rmk}

Next we extend to row-finite graphs \cite[Theorem 4.5]{FGJKP-monic}, which shows that a $\Lambda$-semibranching system on $(X, \mu)$ induces a monic representation of $C^*(\Lambda)$, with monic vector the characteristic function of the whole space, if and only if the associated range sets generate the $\sigma$-algebra  $\mathcal F$ of $X$. We also improve the aforementioned theorem by dropping the condition on the monic vector.   

The following Theorem is the only result in this paper which depends explicitly on the $\sigma$-algebra $\mathcal F$, so for this result only, we denote the associated Hilbert space by $L^2(X, \mathcal F, \mu)$.

\begin{thm}
\label{thm:range-sets-generate-sigma-alg-in-monic}
Let $\Lambda$ be a row-finite, source-free $k$-graph and let  $ \{t_\lambda\}_{\lambda \in \Lambda}$ be a $\Lambda$-semibranching  representation of $C^*(\Lambda)$ on $L^2(X, \mathcal{F}, \mu)$
with $\mu(X)< \infty$,  where $\mathcal{F}$ denotes the $\sigma$-algebra on $X$. Let $\mathcal{R} $ be the collection of sets which are modifications of range sets $ R_\lambda$ {by sets of measure zero}; that is, each element $Y \in  \mathcal R$ has the form 
	\[ Y = R_\lambda \cup S \quad \text{ or } \quad Y = R_\lambda \backslash S\]
	for some set $S$ of measure zero. 
	Let $\sigma(\mathcal{R})$ be the $\sigma $-algebra generated by $\mathcal{R}.$
The representation $ \{t_\lambda\}_{\lambda \in \Lambda}$ 
 is monic if and only if $\sigma(\mathcal R) = \mathcal F$. 
In particular, for a monic $\Lambda$-semibranching representation $\{t_\lambda\}_{\lambda \in \Lambda}$,  the set
	\[
	{\mathcal{S} }:=  \Big\{ \sum_{i=1}^{n} a_{i}t_{\lambda_i} t_{\lambda_i}^* \sum_k \frac{1}{k} \chi_{D_{v_k}} \ | \ n \in \N , \lambda_i \in \Lambda, a_i \in \C\Big\} = \Big\{ \sum_{i=1}^{n} a_{i} \chi_{R_{\lambda_i}}  \ | \ \ n \in \N , \lambda_i \in \Lambda, a_i \in \C\Big\}
	\]
	is dense in  $L^2(X, \mathcal F, \mu)$.
\end{thm}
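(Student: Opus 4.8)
The plan is to reduce everything to a statement about multiplication operators. By Remark~\ref{rmk:Lambda-proj-is-mult} we have $t_\lambda t_\lambda^* = M_{\chi_{R_\lambda}}$, so the von Neumann algebra generated by $\{t_\lambda t_\lambda^*\}_{\lambda}$ is the multiplication algebra of $\sigma(\mathcal R)$-measurable functions, and monicity of $\{t_\lambda\}_\lambda$ is exactly the assertion that some $\xi$ satisfies $\overline{\operatorname{span}}\{\chi_{R_\lambda}\xi : \lambda \in \Lambda\} = L^2(X, \mathcal F, \mu)$ (together with $t_\lambda \neq 0$, which is automatic since $\mu(R_\lambda) > 0$). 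The technical heart of the argument is a density statement I would isolate as a lemma: for any finite measure $\rho$ on $(X, \mathcal F)$ equivalent to $\mu$, the closed linear span $\overline{\operatorname{span}}^{L^2(\rho)}\{\chi_{R_\lambda} : \lambda \in \Lambda\}$ equals $L^2(X, \sigma(\mathcal R), \rho)$.

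To prove this lemma I would exploit the partition structure of the range sets. For each fixed degree $n \in \N^k$, Condition~(a) of Definition~\ref{def-lambda-SBFS-1} together with Definition~\ref{def:SBFS} says that $\{R_\lambda : d(\lambda) = n\}$ is, mod null sets, a countable measurable partition $P_n$ of $X$; hence the linear span of $\{\chi_{R_\lambda} : d(\lambda) = n\}$ is already a $*$-algebra of $P_n$-simple functions, whose $L^2(\rho)$-closure is $L^2(X, \sigma(P_n), \rho)$. Condition~(c) of Definition~\ref{def-lambda-SBFS-1} gives $R_\lambda = \bigsqcup_{\nu \in s(\lambda)\Lambda^m} R_{\lambda\nu}$ mod null, so along the cofinal sequence $n_j = (j, \dots, j)$ the partitions $P_{n_j}$ refine and the filtration $\sigma(P_{n_j})$ increases to $\sigma(\mathcal R)$ (every $R_\eta$ is a finite union of atoms of $P_{n_j}$ once $n_j \geq d(\eta)$). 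Martingale convergence in $L^2(\rho)$ — or directly, approximation of $\sigma(\mathcal R)$-measurable sets by finite unions of partition atoms — then makes $\bigcup_j L^2(\sigma(P_{n_j}), \rho)$ dense in $L^2(\sigma(\mathcal R), \rho)$, proving the lemma.

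For the forward implication, assume $\{t_\lambda\}_\lambda$ is monic with monic vector $\xi$; by Proposition~\ref{supportcyclic}, $\xi \neq 0$ $\mu$-a.e., so $\nu := |\xi|^2 \mu$ is a finite measure equivalent to $\mu$. The map $g \mapsto g\xi$ is then an isometry of $L^2(X, \sigma(\mathcal R), \nu)$ onto its closed image in $L^2(X, \mathcal F, \mu)$, and by the lemma (with $\rho = \nu$) that image is precisely $\overline{\operatorname{span}}\{\chi_{R_\lambda}\xi\} = L^2(X, \mathcal F, \mu)$. Hence for every $B \in \mathcal F$ there is a $\sigma(\mathcal R)$-measurable $g$ with $\chi_B \xi = g\xi$; since $\xi \neq 0$ a.e. this forces $\chi_B = g$ mod null, so $B \in \sigma(\mathcal R)$, giving $\mathcal F = \sigma(\mathcal R)$. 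Conversely, if $\sigma(\mathcal R) = \mathcal F$, then $\mu(X) < \infty$ lets us take $\xi = \chi_X \in L^2(\mu)$; the lemma (with $\rho = \mu$) gives $\overline{\operatorname{span}}\{\chi_{R_\lambda}\} = L^2(X, \mathcal F, \mu)$, so $\chi_X$ is a monic vector and the representation is monic.

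Finally, for the \emph{in particular} clause I would first record the elementary identity underlying the two descriptions of $\mathcal S$: since $R_{\lambda} \subseteq D_{r(\lambda)}$ and the sets $\{D_{v_k}\}$ are disjoint mod null, $t_\lambda t_\lambda^* \sum_k \tfrac1k \chi_{D_{v_k}} = \tfrac{1}{k_0}\chi_{R_\lambda}$ when $r(\lambda) = v_{k_0}$, so after absorbing the scalars both descriptions yield the linear span of $\{\chi_{R_\lambda}\}$. Density of $\mathcal S$ in $L^2(X, \mathcal F, \mu)$ for a monic representation is then immediate from the lemma (with $\rho = \mu$) and the already-established equality $\sigma(\mathcal R) = \mathcal F$. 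I expect the main obstacle to be the lemma itself: the closed linear span of $\{\chi_{R_\lambda}\}$ is a priori only a subspace, not visibly the full $L^2$ of the generated $\sigma$-algebra, and the essential point that rescues this is that fixing the degree makes the range sets disjoint — turning the span into a genuine $*$-algebra — after which the refining filtration and a martingale/approximation argument close the gap.
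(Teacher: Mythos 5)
Your proof is correct, but it takes a genuinely different route from the paper's at both technical pinch points. For the key density statement, the paper never uses the fixed-degree partition/filtration structure: in its converse direction it picks a simple function $\phi$ over sets of $\sigma(\mathcal R)$, invokes the approximation lemma \cite[Lemma A.2.1(ii)]{BDurrett} to replace each $X_i$ by an element $B_i$ of the algebra generated by $\mathcal R$ (asserted there to be, mod null, a finite union of range sets), and then runs explicit $L^2$ estimates to produce an element of $\mathcal S$ within a controlled distance of $f$. Your lemma instead exploits that for each fixed degree $n$ the sets $\{R_\lambda : d(\lambda)=n\}$ form a countable partition mod null, that these partitions refine along $n_j=(j,\dots,j)$ by row-finiteness and Condition (c) of Definition \ref{def-lambda-SBFS-1}, and then applies martingale convergence along the resulting filtration. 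This buys you something real: the paper's intermediate claim that every element of the algebra generated by $\mathcal R$ is a finite union of range sets is delicate when $\Lambda^0$ is infinite (the complement of $R_\lambda$ is then a countable, not finite, union of range sets, since any finite union of range sets is contained mod null in finitely many of the sets $D_v$), and repairing it requires a further finite-measure approximation; your filtration argument never takes complements, so it avoids this issue entirely. For the forward direction, the paper shows the span $M$ is dense, observes that supports of elements of $M$ lie in $\sigma(\mathcal R)$, passes to convergence in measure, and then defers the rest to \cite[Theorem 4.5]{FGJKP-monic}; your isometry $g \mapsto g\xi$ from $L^2(X,\sigma(\mathcal R), |\xi|^2\,d\mu)$ onto the closed span of $\{\chi_{R_\lambda}\xi\}$ is self-contained and yields $\mathcal F = \sigma(\mathcal R)$ in one stroke (note that your last step correctly uses the fact that $\sigma(\mathcal R)$ contains every null set, which is exactly why the theorem builds the null-set modifications into $\mathcal R$). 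What the paper's route buys is brevity, by leaning on Durrett and on the earlier finite-graph result, together with explicit quantitative approximants in $\mathcal S$; what yours buys is a unified, self-contained argument in which a single lemma serves both implications and the ``in particular'' clause.
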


\begin{proof}
Observe first that, if $\lambda \in v_k \Lambda$, then for any $\Lambda$-semibranching representation $\{ t_\lambda\}_{\lambda \in \Lambda}$, Remark \ref{rmk:Lambda-proj-is-mult} establishes that  
\[t_\lambda t_\lambda^* \sum_k \frac{1}{k} \chi_{D_{v_k}} =M_{\chi_{R_\lambda}} \sum_k \frac{1}{k} \chi_{D_{v_k}} =  \frac{1}{k} \chi_{R_{\lambda}}.\]
 In other words, the two descriptions of $\mathcal S$ given in the statement of the theorem are equivalent.

For the forward implication, suppose that the $
\Lambda$-semibranching
 representation $\{t_\lambda\}_{\lambda\in \Lambda}$ is monic and that $\xi$ is a monic vector for the representation. By Proposition~\ref{supportcyclic}, the support of $\xi$ is equal to $X$ a.e.. Moreover, Remark \ref{rmk:Lambda-proj-is-mult} establishes  that 
 \[ M:=\text{span} \{ t_\lambda t_\lambda^* (\xi): \lambda \in \Lambda\} =\text{span} \{ \chi_{R_\lambda} \xi: \lambda \in \Lambda \},\]
which is dense in $L^2(X, \mathcal F, \mu)$.  Furthermore,  the support of any function in $M$ belongs to $\sigma(\mathcal{R})$ since the support of $\xi$ is $X$ a.e.. Therefore for any $f \in L^2(X, \mathcal F, \mu)$ there is a sequence $(f_j)_j$, with{ $f_j \in M$}, such that 
\[\lim_{j\to \infty}  \int_X | f_j - f|^2 \, d\mu =0.\]
In particular, $(f_j) \to f$ in measure. 
The rest of the proof follows now exactly as in the proof of Theorem 4.5 in \cite{FGJKP-monic}.

For the converse, suppose $\sigma(\mathcal R) = \mathcal F$; we will show that {$\chi_X$ is a monic vector for the representation by showing that $\mathcal S$ is dense in $L^2(X, \mathcal F, \mu)$}.  Fix $f \in L^2(X,  \sigma(\mathcal R), \mu)$ and $\epsilon > 0$.  Choose a simple function $\phi = \sum_{i=1}^n a_i \chi_{X_i}$, with $X_i \in \sigma(\mathcal R)$, such that $\int_X |\phi -f|^2 d\mu < \epsilon$, and set $A : = \max  | a_i|$. {By \cite[Lemma A.2.1(ii)]{BDurrett}, for each $i$, there is an element $B_i$ of the algebra $\widetilde {\mathcal R}$ of sets generated by $\mathcal R$ such that $\mu(B_i \Delta X_i) < \frac{\epsilon}{n(n-1) A^2}$. That is, each $B_i$ is a finite union of elements of $\mathcal R$, so for each $i$ there exists a finite collection  $\{ R_{\lambda_{ij}}\}_j$ of range sets such that $\mu\left(B_i \Delta (\cup_j R_{\lambda_{ij}}) \right) =0.$  In other words, if we  define $\psi = \sum_{i=1}^n a_i \chi_{B_i}$, then $\psi \in \mathcal S$. Moreover, 
\begin{align*}  \int_X |\psi - \phi|^2 \, d\mu & = \int_X \left| \sum_i a_i (\chi_{B_i \backslash X_i} - \chi_{X_i \backslash B_i} ) \right| ^2\, d\mu \leq \int \left( \sum_i |a_i| \chi_{X_i \Delta B_i} \right)^2 \, d\mu \\
&  = \sum_i \int_{X_i \Delta B_i} |a_i|^2 \, d\mu + 2 \sum_{i\not= j} |a_i | \, |a_j| \mu ( (X_i \Delta B_i) \cap (X_j \Delta B_j) )  < 2\epsilon.
\end{align*}
The Cauchy-Schwarz inequality now implies that 
\begin{align*}
 \int_X | \psi - f|^2 \, d\mu & \leq \int_X (|\psi - \phi| + | \phi - f|)^2 \, d\mu \\
 & \leq \int_X |\psi - \phi|^2 \, d\mu + \int_X |\phi - f|^2 \, d\mu + 2 \int_X | \phi - \psi| \, | \phi- f| \, d\mu \\
 &
  < 2 \epsilon + \epsilon + 2\sqrt 2 \epsilon.
 \end{align*}
It follows that $\mathcal S$ is dense in $L^2(X, \mathcal F, \mu)$ as claimed.}
\end{proof}

We conclude this section with a necessary condition for monicity of a representation of a $k$-graph $C^*$-algebra.

 \begin{prop}
\label{prop:vertex-prop-implies-no-monic-repns}
Let $\Lambda$ be a row-finite source-free  $k$-graph and fix a $\Lambda$-semibranching function  system on a $\sigma$-finite measure space $(X, \mu)$.  Suppose that a vertex $v$ of $\Lambda$ satisfies 
\begin{equation}\label{eq:eq-needed-cycle-no-entry} \forall \tau \in v\Lambda:\  R_{\tau}=R_{v\tau} =D_v.  \end{equation}
If there is a measurable subset $X_v$ of $D_v$ with $0 < \mu(X_v) < \mu(D_v)$, then no   $\Lambda$-projective representation arising from this $\Lambda$-semibranching function system will be monic.
\end{prop}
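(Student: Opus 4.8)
The plan is to suppose, for contradiction, that some $\Lambda$-projective representation $\{T_\lambda\}_{\lambda \in \Lambda}$ arising from this system is monic, with monic vector $\xi \in L^2(X,\mu)$, and to derive a contradiction by restricting attention to $D_v$. Recall from Remark~\ref{rmk:Lambda-proj-is-mult} that $T_\lambda T_\lambda^* = M_{\chi_{R_\lambda}}$ for every $\lambda$, so that monicity is equivalent to the density $\overline{\text{span}}\{\chi_{R_\lambda}\xi : \lambda \in \Lambda\} = L^2(X,\mu)$. I will exploit the rigidity forced by Equation~\eqref{eq:eq-needed-cycle-no-entry} to show that, after restriction to $D_v$, every element of this span is a scalar multiple of $\xi|_{D_v}$, which is incompatible with the density.

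The key measure-theoretic step is to prove that $\mu(R_\lambda \cap D_v) = 0$ whenever $r(\lambda) \neq v$. Fix such a $\lambda$ and set $n = d(\lambda)$. Since $\Lambda$ is source-free, $v\Lambda^n \neq \emptyset$, so I may choose $\tau \in v\Lambda^n$. By Equation~\eqref{eq:eq-needed-cycle-no-entry} we have $R_\tau = D_v$, and since $r(\lambda) \neq v = r(\tau)$ we have $\lambda \neq \tau$. Because $\{\tau_\eta : d(\eta) = n\}$ is a semibranching function system (Condition (a) of Definition~\ref{def-lambda-SBFS-1}), its range sets are essentially disjoint, so $\mu(R_\lambda \cap D_v) = \mu(R_\lambda \cap R_\tau) = 0$. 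On the other hand, Equation~\eqref{eq:eq-needed-cycle-no-entry} gives $R_\lambda = D_v$ when $r(\lambda) = v$. Consequently, for every $\lambda \in \Lambda$, the function $\chi_{R_\lambda}\chi_{D_v}$ equals $\chi_{D_v}$ if $r(\lambda)=v$ and vanishes $\mu$-a.e.\ otherwise.

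Now let $\rho \colon L^2(X,\mu) \to L^2(D_v, \mu)$ denote the bounded, surjective restriction map $f \mapsto f|_{D_v}$, and set $M = \overline{\text{span}}\{\chi_{R_\lambda}\xi : \lambda \in \Lambda\}$. The previous paragraph shows $\rho(\chi_{R_\lambda}\xi) \in \C\,\rho(\xi)$ for every $\lambda$, whence $\rho(\text{span}\{\chi_{R_\lambda}\xi : \lambda \in \Lambda\}) \subseteq \C\,\rho(\xi)$; since $\rho$ is continuous and the at-most-one-dimensional space $\C\,\rho(\xi)$ is closed, it follows that $\rho(M) \subseteq \C\,\rho(\xi)$. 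If the representation were monic, then $M = L^2(X,\mu)$ and hence $\rho(M) = L^2(D_v,\mu)$, forcing $L^2(D_v,\mu) \subseteq \C\,\rho(\xi)$ to be at most one-dimensional. But $\mu(D_v) < \infty$ (since $\{\tau_w\}_{w \in \Lambda^0}$ is a semibranching function system), so the hypothesis that $X_v \subseteq D_v$ satisfies $0 < \mu(X_v) < \mu(D_v)$ makes $\chi_{X_v}$ and $\chi_{D_v \setminus X_v}$ two nonzero orthogonal elements of $L^2(D_v,\mu)$, giving $\dim L^2(D_v,\mu) \geq 2$. This contradiction shows no such monic vector $\xi$ can exist.

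I expect the main obstacle to be the measure-theoretic step of the second paragraph; once the essential disjointness of equal-degree range sets is combined with source-freeness and Equation~\eqref{eq:eq-needed-cycle-no-entry}, the remainder is a routine restriction-map argument. I would also emphasize that the conclusion is uniform over all $\Lambda$-projective representations built from the given system, since $T_\lambda T_\lambda^*$ depends only on the underlying semibranching function system and not on the choice of the functions $\{f_\lambda\}$.
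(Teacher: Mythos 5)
Your proof is correct and follows essentially the same route as the paper's: both arguments restrict everything to $D_v$ and use Equation \eqref{eq:eq-needed-cycle-no-entry} to show that the closed span $M = \overline{\text{span}}\{\chi_{R_\lambda}\xi : \lambda \in \Lambda\}$ collapses to (at most) the one-dimensional space $\C\,\xi|_{D_v}$ after restriction, and then derive a contradiction from the intermediate-measure set $X_v$. Two small differences are worth recording, both making your write-up more self-contained. First, the paper invokes Proposition \ref{supportcyclic} (a monic vector must have a.e.\ full support) and phrases the contradiction in terms of supports: every $f \in M$ has $f|_{D_v}$ supported on all of $D_v$ or on a null set, while $\chi_{X_v}$ is supported on neither. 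Your dimension count via the restriction map $\rho$ — namely $\rho(M) \subseteq \C\,\rho(\xi)$ has dimension at most one, while monicity would force $\rho(M) = L^2(D_v,\mu)$, which contains the two orthogonal nonzero vectors $\chi_{X_v}$ and $\chi_{D_v \setminus X_v}$ — needs no information about the support of $\xi$, so Proposition \ref{supportcyclic} is never used. Second, you prove explicitly, from source-freeness, the hypothesis $R_\tau = D_v$ for $\tau \in v\Lambda$, and the essential disjointness of equal-degree range sets, that $\mu(R_\lambda \cap D_v) = 0$ whenever $r(\lambda) \neq v$; the paper leaves this step implicit, though it is needed to conclude that restrictions of elements of $M$ to $D_v$ are scalar multiples of $\xi|_{D_v}$.
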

\begin{proof}
Let $\{t_\lambda\}_{\lambda \in \Lambda}$ be a $\Lambda$-projective representation of $C^*(\Lambda)$ on $L^2(X, \mu)$, and write $\pi$ for the induced representation of $C_0(\Lambda^\infty)$. Recall from Remark \ref{rmk:Lambda-proj-is-mult} that 
\[ t_\lambda t_\lambda^*(f) = \pi(\chi_{Z(\lambda)})(f) = \chi_{R_\lambda} f\]
for any finite path $\lambda \in \Lambda$ and any $f \in L^2(X, \mu)$.

Suppose that $v$  and  $X_v \subseteq D_v$ are as in the statement of the theorem.  We will show that no vector $\xi \in L^2(X, \mu)$ can be monic. 
Recall from Proposition \ref{supportcyclic} that if $\xi$ is monic for $\pi$ then $X \backslash \text{supp}(\xi)$ has measure zero.  Choose, therefore, $\xi \in L^2(X, \mu)$ with a.e.~full support.  Let 
\[M = \overline{\text{span}} \{ \chi_{R_\lambda} \xi: \lambda \in \Lambda \} = \overline{\text{span}} \{t_\lambda t_\lambda^*(\xi)\}  = \pi (C_0(\Lambda^\infty))(\xi).\] 
Equation \eqref{eq:eq-needed-cycle-no-entry} implies that, if $r(\lambda) = v$, then 
 $\chi_{R_\lambda} \xi|_{D_v} = \xi|_{D_v}$. 
  So if $f\in M$ then the support of
$f|_{D_v}$ is either $D_v$ or has measure zero. 
Therefore $\chi_{X_v} \notin M$ and hence $\xi$ is not monic. 
\end{proof}

\begin{rmk}By Definition \ref{def-lambda-SBFS-1}, if we have a $\Lambda$-semibranching function system on $(X, \mu)$, then  for every fixed $m \in \N^k$,
\[
D_v=\bigcup_{\lambda \in v \Lambda^m} R_{\lambda}, 
\]
and each $R_\lambda$ has positive measure.  It follows that 
 Equation \eqref{eq:eq-needed-cycle-no-entry} is satisfied iff
\begin{equation}\label{eq:conditon-for-no-existence-set}
\hbox{For  every $m \in \N^k$ there is precisely one path of degree $m$ having range $v$.}
\end{equation}
\end{rmk}

\begin{defn}
\label{def:cycle-without-entr}
In a directed graph (1-graph) $E$, a {\em cycle} is a finite path  $e_1 e_2 \cdots e_n$ with $s(e_n) = r(e_1)$.  We say that a cycle {\em has an entrance} if there is an $i \leq n$ and an edge $e$ with $r(e) = s(e_i)$ but $e \not= e_{i+1}$.
\end{defn}

\begin{cor}
\label{prop:no-entrance-implies-no-monic-repns}
Let $E$ be a row-finite source-free directed graph which contains a cycle without entrance, and let $(X, \mu)$ be $\sigma$-finite.  If a $\Lambda$-projective representation of $C^*(E)$ on $L^2(X, \mu)$ is monic, then for every vertex $v$ lying on a cycle without entrance, $D_v$ has no measurable subsets $X_v$ with $0 < \mu(X_v) < \mu(D_v)$.
\end{cor}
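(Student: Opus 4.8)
The plan is to show that Corollary~\ref{prop:no-entrance-implies-no-monic-repns} follows almost immediately from Proposition~\ref{prop:vertex-prop-implies-no-monic-repns} by verifying that a vertex lying on a cycle without entrance satisfies the hypothesis \eqref{eq:eq-needed-cycle-no-entry}. The corollary is really the specialization of the general $k$-graph proposition to the $1$-graph setting, where the combinatorial condition \eqref{eq:conditon-for-no-existence-set} (``for every $m \in \N^k$ there is precisely one path of degree $m$ having range $v$'') becomes the familiar notion of a cycle without entrance. So the whole proof reduces to a translation between these two descriptions.

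First I would invoke the contrapositive framing of Proposition~\ref{prop:vertex-prop-implies-no-monic-repns}: that proposition states that if $v$ satisfies \eqref{eq:eq-needed-cycle-no-entry} and $D_v$ contains a measurable subset $X_v$ with $0 < \mu(X_v) < \mu(D_v)$, then no $\Lambda$-projective representation arising from the given semibranching function system is monic. Reading this contrapositively: if the representation \emph{is} monic, then no such $X_v$ can exist for any $v$ satisfying \eqref{eq:eq-needed-cycle-no-entry}. Thus it suffices to check that every vertex $v$ lying on a cycle without entrance in fact satisfies \eqref{eq:eq-needed-cycle-no-entry}, equivalently \eqref{eq:conditon-for-no-existence-set}.

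The key step, then, is the combinatorial claim: if $v$ lies on a cycle $c = e_1 e_2 \cdots e_n$ with no entrance, then for every $m \in \N$ there is precisely one path of degree $m$ with range $v$. I would argue this directly. Writing $v = r(e_1) = s(e_n)$, the absence of an entrance means that at each vertex $s(e_i)$ traversed by the cycle, the only edge with that range is $e_{i+1}$ (indices mod $n$); otherwise we would have produced an entrance. Consequently, starting from $v$ and reading off edges, the unique path of length $1$ out of $v$ (in the ``range'' sense, i.e.\ the unique element of $v\Lambda^1$) is forced, and inductively the unique path of length $m$ is $e_1 e_2 \cdots$ (cyclically repeated), so $\#(v\Lambda^m) = 1$ for every $m$. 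This is exactly \eqref{eq:conditon-for-no-existence-set}, hence \eqref{eq:eq-needed-cycle-no-entry} holds at $v$. Applying Proposition~\ref{prop:vertex-prop-implies-no-monic-repns} with $\sigma$-finiteness of $(X,\mu)$ as hypothesized completes the argument.

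I do not expect any serious obstacle here; the content is entirely in the combinatorial identification, and that is a standard and short unwinding of Definition~\ref{def:cycle-without-entr}. The one point to be careful about is the direction of the ``no entrance'' condition: Definition~\ref{def:cycle-without-entr} phrases an entrance as an edge $e$ with $r(e) = s(e_i)$ but $e \neq e_{i+1}$, so I must confirm this is precisely the statement that each intermediate vertex receives only the cycle edge, which is what forces uniqueness of forward paths and hence the single-path condition \eqref{eq:conditon-for-no-existence-set}. Once that identification is made, the corollary is immediate.
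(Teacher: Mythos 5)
Your proposal is correct and follows the paper's own proof exactly: the paper likewise observes that a vertex on a cycle without entrance satisfies Equation~\eqref{eq:conditon-for-no-existence-set} (equivalently, Equation~\eqref{eq:eq-needed-cycle-no-entry}, by the remark following Proposition~\ref{prop:vertex-prop-implies-no-monic-repns}) and then applies Proposition~\ref{prop:vertex-prop-implies-no-monic-repns}. The only difference is that you spell out the inductive unwinding of Definition~\ref{def:cycle-without-entr} which the paper dismisses with ``by definition,'' and your unwinding is accurate.
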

\begin{proof}
By definition, if $v$ lies on a cycle without entrance, then $v$ satisfies Equation \eqref{eq:conditon-for-no-existence-set}. The result now follows from Proposition \ref{prop:vertex-prop-implies-no-monic-repns}.
\end{proof}

{
\begin{rmk} In \cite[Example~4.7]{FGJKP-monic}, the authors present an example of a $\Lambda$-semibranching representation associated to a 1-graph, on $L^2([0,1])$ (with usual Lebesgue measure), such that the vector $\chi_{[0,1]}$ is not monic. Indeed, in that graph, there is a vertex $v = v_2$ which supports a loop with no entrance, such that $D_v = (1/2, 1]$.  Thus,  the corollary above shows that the representation constructed in \cite[Example~4.7]{FGJKP-monic} cannot be monic, and has no monic vectors.
\end{rmk}
}

\begin{rmk}
\label{rmk:higher-rank-cycles-w-o-entrance}
Although Evans and Sims generalized  the concept of ``cycle without entrance'' to higher-rank graphs in \cite{evans-sims} (see \cite[Remark 3.6]{evans-sims}), we do not have an analogue of Corollary \ref{prop:no-entrance-implies-no-monic-repns} for higher-rank cycles without entrance: the 2-graph of  \cite[Example 6.1]{evans-sims} contains a generalized cycle without entrance but does not satisfy Equation \eqref{eq:conditon-for-no-existence-set}.  The cycline pairs introduced in \cite[Definition 4.3]{BNR} can also be viewed as a  generalization of the notion of ``cycles without entrance'' to higher-rank graphs -- by \cite[Remark 4.9]{BNR}, a generalized cycle without entrance is a cycline pair if every extension of that generalized cycle without entrance is also a generalized cycle without entrance.  However, even this stronger condition is not enough to guarantee Equation \eqref{eq:conditon-for-no-existence-set}.  Section 5.11 of \cite{mcnamara} (cf.~also \cite[Section 5.1]{FGLP}) exhibits a 2-graph with one vertex for which Equation \eqref{eq:conditon-for-no-existence-set} fails, but which has nontrivial cycline pairs such as  $(e_1, f_1)$.
\end{rmk}

\section{Irreducibility} 
\label{sec:irred}

In this section, we describe a variety of necessary and sufficient conditions for a $\Lambda$-projective representation to be irreducible. 
{We begin by defining the notions of ergodicity that we will use to analyze the irreducibility of $\Lambda$-projective representations, and describing them in both measure- and operator-theoretic terms.  Proposition \ref{prop:irred-implies-ergodic} then identifies the commutant of a $\Lambda$-projective representation on $(X, \mu)$ in terms of invariant functions on $X$.

Section \ref{sec:necessary} highlights three necessary conditions for a $\Lambda$-projective representation to be irreducible, but shows by example that these conditions are not sufficient.  However, in Section~\ref{sec:sufficient} we identify a variety of conditions under which a $\Lambda$-projective representation on the infinite path space $\Lambda^\infty$ must be irreducible.  The final subsection, Section~\ref{sec:random}, includes a variety of other results related to the irreducibility of $\Lambda$-semibranching representations.}

\begin{defn}\label{def:ergodic-maps}
Let $(X, \mu)$ be a measure space. A function $T: X \to X$ is {\em ergodic} with respect to $\mu$ if whenever $\mu(A \Delta T^{-1}(A)) = 0$ (i.e., $A$ is invariant {as in Definition \ref{def:invariant-set}}) we have either $\mu(A) = 0$ or $\mu(X \backslash A) = 0$. A family of maps $\{ T_i\}_{i\in I} $ is {\em jointly ergodic} with respect to $\mu$ if whenever $\mu(A \Delta T_i^{-1}(A)) = 0 $ for all $i$, we have either $\mu(A) = 0$ or $\mu(X \backslash A) = 0$.
\end{defn}

\begin{rmk}
If $\mu(X) < \infty$, this definition agrees with the definition of ergodicity used in earlier papers such as \cite{FGJKP-monic}.
\end{rmk}

\begin{defn}
\label{def:invariant-fcn}
Let $(X, \mu)$ be a measure space, and $T: X \to X$ a function. A measurable function $f$ is \emph{invariant} with respect to $T$ if $f\circ T = f $ a.e.. {We say that $f$ is \emph{jointly invariant} with respect to a family of maps $\{T_i\}_{i\in I}$ if $f\circ T_i = f$ a.e..} for all i.
\end{defn}

The following lemma is undoubtedly well-known to experts; we include it here for completeness and ease of reference.

\begin{lemma}\label{lem:ergodic-invariant} Let $(X, \mu)$ be a measure space, and fix $T: X \to X$ and $T_i:X \to X$, $i\in I$. Then,
\begin{itemize}
 \item[(a)]  $T$ is ergodic if, and only if, every invariant measurable function $f: X \to \C$ is constant a.e..
\item[(b)]  {The family $\{T_i\}$ is jointly ergodic if, and only if, every jointly invariant measurable function $f: X \to \C$ is constant a.e..}
\end{itemize}
\end{lemma}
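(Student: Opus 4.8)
The plan is to establish both equivalences from a single template, using superlevel sets to pass from an arbitrary invariant function to invariant sets, and indicator functions to pass back. Since (b) is word-for-word the ``joint'' analogue of (a), I would give the argument for (a) in detail and then indicate how each step transfers.

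For the easy direction of (a), suppose every invariant measurable function is constant a.e., and let $A \subseteq X$ be invariant in the sense of Definition~\ref{def:invariant-set}, i.e.\ $\mu(A \Delta T^{-1}(A)) = 0$. Then $\chi_A \circ T = \chi_{T^{-1}(A)} = \chi_A$ a.e., so $\chi_A$ is an invariant function and hence a.e.\ constant; that constant is $0$ or $1$, which says exactly that $\mu(A) = 0$ or $\mu(X \backslash A) = 0$. Thus $T$ is ergodic.

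For the substantive direction, I would assume $T$ ergodic and $f$ invariant, and (treating real and imaginary parts separately) reduce to the case $f$ real-valued. For each $q \in \Q$ put $A_q = \{ x : f(x) > q\}$. Since $f \circ T = f$ on a conull set, there $\{ f \circ T > q\} = \{ f > q\}$, so $\mu(A_q \Delta T^{-1}(A_q)) = 0$ and $A_q$ is invariant; ergodicity then forces $\mu(A_q) = 0$ or $\mu(X \backslash A_q) = 0$ for every $q$. Writing $B = \{ q : \mu(X \backslash A_q) = 0\}$ and $C = \{ q : \mu(A_q) = 0\}$, I obtain a partition $\Q = B \sqcup C$ in which $B$ is downward-closed and $C$ upward-closed (because $A_q \subseteq A_{q'}$ when $q' \le q$). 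As $f$ is finite a.e., neither $B$ nor $C$ can be all of $\Q$, so $c := \sup B = \inf C$ is a finite real number; letting $q \uparrow c$ within $B$ yields $f \ge c$ a.e.\ and $q \downarrow c$ within $C$ yields $f \le c$ a.e., whence $f = c$ a.e.

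Finally, for (b) I would note that, per Definition~\ref{def:invariant-fcn} and Definition~\ref{def:ergodic-maps}, a set or function is ``jointly invariant'' exactly when it is invariant for each $T_i$ simultaneously; the indicator-function computation and the superlevel-set argument then go through verbatim with ``invariant'' replaced by ``jointly invariant'' and ``ergodic'' by ``jointly ergodic.'' I expect the only delicate points to lie in the direction $\Rightarrow$: verifying that the superlevel sets are invariant in the exact symmetric-difference sense of Definition~\ref{def:invariant-set}, and confirming that the cut point $c$ is finite — the latter relying on the fact that a $\C$-valued (hence a.e.\ finite) measurable function cannot have $\{ f > q\}$ conull for every $q$. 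These are routine, so I do not anticipate a genuine obstacle.
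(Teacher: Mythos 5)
Your proof is correct and takes essentially the same approach as the paper's: the easy direction via indicator functions of invariant sets, and the hard direction via superlevel sets of the invariant function, which ergodicity forces to be null or conull, with the a.e.\ value of $f$ identified as the threshold where the transition occurs. The only cosmetic differences --- rational rather than real thresholds (your Dedekind-cut framing versus the paper's supremum $K_I = \sup\{r : \mu(I_r) \neq 0\}$), and an explicit reduction to real and imaginary parts instead of the paper's simultaneous treatment of $I_r$ and $R_r$ --- do not change the substance.
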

\begin{proof}

{We prove (a) and leave the proof of (b) to the reader, as it is essentially the same.}

Suppose first that every invariant measurable function on $X$ is constant. Choose $A \subseteq X$ such that $\mu(A \Delta T^{-1}(A)) = 0$.  Then $\chi_A \circ T = \chi_A$ a.e. and so $\chi_A$ is constant a.e.. Therefore either $\chi_A = 0$ a.e.~(and so $\mu(A) = 0$) or $\chi_A = 1$ a.e., in which case $\mu(A^c) = 0$.

Suppose now that $T$ is ergodic and let $f$ be a measurable invariant function. For each $r\in \R$, set 
\[ I_{r}=\{x\in X: \text{Im}\,f(x)>r\}, \quad R_r = \{ x\in X: \text{Re}\, f(x) > r\}, \quad E_r \in \{ I_r, R_r\}.\]
Notice that each $E_r$, for $r \in \R$, is measurable and invariant with respect to $T$, and that if $s > r$ then $I_s \subseteq I_r$ and $R_s \subseteq R_r$. Since $T$ is ergodic, we obtain that, for all $r$,
$\mu(E_r)=0$ or $\mu(X\setminus E_r)=0$. 
Before we proceed, we quickly observe that a real-valued function $f$ is constant if, and only if, there exists $C$ such that for all $r\geq C$,  $\mu(R_r)=0$, and for all $r<C$, $\mu(X\setminus R_r)=0$.

Now, let $K_I=\sup\{r:\mu(I_r)\neq 0\}$ and $K_R = \sup\{ r: \mu(R_r) \not= 0\}$. 
First, we will show that $ K_I < \infty$; a similar argument will also establish that $K_R < \infty$.  If $x\in
\bigcap_{n\in \N} I_n$ then $f(x)=+\infty$. Consequently, $\mu
(\bigcap_{n\in\N}I_n)=0$ (in fact $\bigcap_{n\in\N}I_n = \emptyset$).
As $f$ is invariant and $T$ is ergodic,  for every $r$, either $\mu(I_r)=0$ or $\mu(X\setminus I_r)=0$.
Notice that if $\mu(I_r)=0$ for some $r$ then $\mu(I_s)=0$ for all
$s>r$ and in this case $K_I$ is finite. Suppose that $\mu(X\setminus
I_r)=0$ for all $r$. Then $\mu \left(X\setminus \bigcap_{n\in \N} I_n
\right)=\mu \left( \bigcup_{n\in \N} X\setminus I_n \right) =  0$,
which implies that $\mu(X)=0$, a contradiction.

  Moreover, $\mu(I_{K_I})=0$, since $I_{K_I}=\bigcup_{n\in \N} I_{K_I+\frac{1}{n}}$ and $\mu(I_{K_I+\frac{1}{n}})=0$ for all $n$. We now use the  characterization of a constant function given above. For $r\geq K_I$ we have that $\mu(I_r)\leq \mu (I_{K_I})=0$ and, for $r<K_I$  , there exists $r_1$ such that $r<r_1<K_I$ and $\mu(X\setminus I_{r_1})=0$ (from the definition of sup and the fact that the invariant set $I_{r_1}$ satisfies $\mu(I_{r_1})\neq 0$). Since $I_{r_1}\subseteq I_r$, this implies that $\mu(X\setminus I_r)=0$.  In other words, whenever $ r < K_I$, there is  a set of full measure on which $\text{Im}f(x) > r$.  It  follows that $\text{Im}(f) = K_I$ a.e..
A similar argument will show that $\text{Re}(f) = K_R$ a.e., completing the proof that $f = K_R +  iK_I$ is constant a.e..
\end{proof}

{One of the equivalent ways to describe irreducibility of a representation is via its commutant. Therefore, we give a description of the commutant of a representation associated to a $\Lambda$-projective system below. }

 \begin{prop}
 \label{prop:irred-implies-ergodic}
    Let $\Lambda$ be a row-finite $k$-graph with no sources.  
  Suppose that we have a $\Lambda$-projective system on a locally compact, $\sigma$-compact, Hausdorff space $(X, \mu)$, where $\mu$ is a Radon measure, and let 
 $\{T_\lambda:\lambda\in\Lambda\}$  be the associated representation of $C^*(\Lambda)$ on $L^2(X, \mu)$.
  Then, the commutant of the operators $\{T_\lambda: \lambda \in \Lambda\}$ consists of multiplication operators, and contains the set of multiplication
      operators by functions $h$ with $h \circ \tau^n = h$ $\mu$-a.e., for all $n\in \N^k$.  
\end{prop}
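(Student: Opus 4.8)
The plan is to handle the two assertions separately. The containment statement---that multiplication by a jointly $\{\tau^n\}$-invariant function lies in the commutant---is a direct computation, whereas the claim that every element of the commutant is a multiplication operator is the structural core and is where I expect the real work to lie.

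For the containment, fix $h\in L^\infty(X,\mu)$ with $h\circ\tau^n=h$ $\mu$-a.e.\ for all $n\in\N^k$, and let me verify $T_\lambda M_h=M_h T_\lambda$ for every $\lambda\in\Lambda$. Using the defining formula \eqref{eq:T-lambda}, for $f\in L^2(X,\mu)$ I compute $T_\lambda(M_h f)=f_\lambda\cdot\big((hf)\circ\tau^{d(\lambda)}\big)=f_\lambda\cdot(h\circ\tau^{d(\lambda)})\cdot(f\circ\tau^{d(\lambda)})$, while $M_h(T_\lambda f)=h\cdot f_\lambda\cdot(f\circ\tau^{d(\lambda)})$. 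These agree precisely when $f_\lambda\cdot\big(h\circ\tau^{d(\lambda)}-h\big)=0$ $\mu$-a.e.; since $f_\lambda$ vanishes off $R_\lambda$ and $h\circ\tau^{d(\lambda)}=h$ a.e.\ on all of $X$ by hypothesis, this holds. As the commutant is a $*$-algebra, $M_h$ then commutes with each $T_\lambda^*$ as well, so $M_h$ lies in the commutant. Via Lemma~\ref{lem:ergodic-invariant}, these are exactly the invariant functions relevant to the ergodicity analysis that follows.

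For the structural claim, let $W$ commute with every $T_\lambda$ and hence, the commutant being a $*$-algebra, with every $T_\lambda T_\lambda^*=M_{\chi_{R_\lambda}}$ (Remark~\ref{rmk:Lambda-proj-is-mult}, Equation~\eqref{eq:lambda-proj-is-mult}). In particular $W$ commutes with the mutually orthogonal vertex projections $M_{\chi_{D_v}}$, which sum to the identity by the degree-$0$ case of Definition~\ref{def-lambda-SBFS-1}(a), so $W$ respects the decomposition $L^2(X,\mu)=\bigoplus_{v\in\Lambda^0}L^2(D_v,\mu)$. For each fixed $n\in\N^k$ the sets $\{R_\lambda:d(\lambda)=n\}$ partition $X$ up to $\mu$-null sets, and these partitions refine as $n$ grows because $R_\lambda=\bigsqcup_{\nu\in s(\lambda)\Lambda^p}R_{\lambda\nu}$. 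My plan is to show that the projections $\{M_{\chi_{R_\lambda}}\}_{\lambda}$ generate all of $L^\infty(X,\mu)$ as a von Neumann algebra; to organize this I would identify the generated algebra with $\pi(C_0(\Lambda^\infty))''$, where $\pi$ is the restriction of the representation to $C_0(\Lambda^\infty)$, using the projection-valued measure $P$ of Proposition~\ref{prop-proj-valued-measure-gen-case}, for which $P(Z(\lambda))=M_{\chi_{R_\lambda}}$. Because $\mu$ is a Radon measure on a locally compact $\sigma$-compact Hausdorff space, $L^\infty(X,\mu)$ sits in $B(L^2(X,\mu))$ as a maximal abelian subalgebra; so once generation is in hand, $W$ commutes with all of $L^\infty(X,\mu)$ and is therefore a multiplication operator.

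The step I expect to be the main obstacle is exactly this generation claim. A priori the refining partitions by the range sets generate only a sub-$\sigma$-algebra $\mathcal F_\infty\subseteq\mathcal F$, and one must use the topological hypotheses together with the fine combinatorics of the cylinder sets (Lemma~\ref{carnival}) to see that $\bigcup_n\{R_\lambda:d(\lambda)=n\}$ separates the measure algebra modulo $\mu$-null sets. This is transparent in the model case $X=\Lambda^\infty$ with the standard maps, where $R_\lambda=Z(\lambda)$ and the cylinder sets already generate the Borel $\sigma$-algebra; more care is needed in the general setting. Once past this point, the conclusion follows from the routine fact that an operator commuting with a maximal abelian multiplication algebra is itself a multiplication operator, and no further computation is required.
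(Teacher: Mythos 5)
Your containment direction is correct and is exactly what the paper dismisses as ``easy to check'': the computation $T_\lambda M_h f = f_\lambda\cdot(h\circ\tau^{d(\lambda)})\cdot(f\circ\tau^{d(\lambda)}) = M_h T_\lambda f$ is the whole content. One small slip: the commutant of the non-self-adjoint set $\{T_\lambda\}$ need not be a $*$-algebra, so you cannot get commutation with $T_\lambda^*$ ``for free''; the fix is immediate, though, since $\bar h$ is also invariant, so $M_{\bar h}T_\lambda = T_\lambda M_{\bar h}$, and taking adjoints gives $M_h T_\lambda^* = T_\lambda^* M_h$.

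The structural half is where the real problem sits, and you have located it precisely: your argument needs the generation claim $\{M_{\chi_{R_\lambda}}:\lambda\in\Lambda\}'' = L^\infty(X,\mu)$, which you leave open. This is a genuine gap, and it cannot be closed at the stated level of generality, because the generation claim --- and in fact the first assertion of the proposition --- fails for general $\Lambda$-projective systems. The paper's own Example~\ref{seal} is a counterexample: for the one-vertex, one-loop graph represented on $X=\{0,1\}$ with counting measure via the flip $\tau_e(x)=x+1 \bmod 2$, every range set $R_\lambda$ equals $X$, so the range projections generate only $\C\,\mathrm{Id}$; meanwhile $T_e$ is the self-adjoint flip unitary $F$, which commutes with $T_v=\mathrm{Id}$ and with every $T_{e^n}=F^n$, hence lies in the commutant of the representation, yet $F$ is not a multiplication operator. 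Whether the range sets generate the full $\sigma$-algebra modulo null sets is precisely the monicity criterion of Theorem~\ref{thm:range-sets-generate-sigma-alg-in-monic}, so it is a nontrivial extra hypothesis, true for the standard system on $\Lambda^\infty$ (where $R_\lambda = Z(\lambda)$ and cylinder sets generate the Borel sets) but not in general. You should also know that the paper's own proof contains the same jump, hidden in a citation: it observes that $T$ commutes with $\pi(C_0(\Lambda^\infty))$ and then invokes the maximal abelianness of $L^\infty(X,\mu)$ (Pedersen) to conclude $T=M_h$, but that inference requires $\pi(C_0(\Lambda^\infty))' \subseteq L^\infty(X,\mu)$, which is exactly your unproven generation claim. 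So your ``main obstacle'' is not an artifact of your approach; it is the reason the proposition as stated needs to be restricted --- e.g.\ to $X=\Lambda^\infty$ (or an invariant subset) with the standard coding and prefixing maps, which is in fact the only setting in which the paper later uses it, in Theorems~\ref{thm:ergodic} and~\ref{thm:ergodic-inf-path-space}.
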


 \begin{proof}
 Let $T \in B(L^2(X, \mu))$ be an operator in the commutant of the $\Lambda$-projective representation. Then $T$ commutes with the representation $\pi$
 of the abelian subalgebra $C_0(\Lambda^\infty)$ of $C^*(\Lambda)$, where 
 \[ \pi(\chi_{Z(\lambda)}) = T_\lambda T_\lambda^* = M_{\chi_{R_\lambda}}.\]
By \cite[Theorem 6.3.4 and Proposition 6.3.6]{arveson}, the fact that $\mu$ is a Radon measure on the locally compact, $\sigma$-compact, Hausdorff space $X$, enables us to invoke the proof of \cite[Proposition 4.7.6]{pedersen} to conclude that $T = M_h$, for some $h \in L^\infty(X)$. It is easy to check that any  multiplication operator $T = M_h$, where $h \circ \tau^n = h$ a.e. for all $n\in \N^k$, commutes with each operator $T_\lambda$.

\end{proof}

\subsection{Necessary conditions for irreducibility}
\label{sec:necessary}
{In this subsection we describe three necessary conditions for the irreducibility of a $\Lambda$-projective representation of $C^*(\Lambda)$ arising from $\Lambda$-projective systems. Proposition \ref{ventosul} tells us that all irreducible $\Lambda$-projective representations arise from $\Lambda$-semibranching function systems with jointly ergodic coding maps.  From this, we deduce Theorem~\ref{prop:cofinal}, {which states} that only cofinal $k$-graphs admit irreducible $\Lambda$-semibranching representations. Finally, Proposition \ref{prop:irred-implies}  describes how the functions $f_\lambda$ associated to an irreducible $\Lambda$-projective representation must interact.  We also present a variety of examples, indicating the insufficiency of these necessary conditions to guarantee irreducibility.}

\begin{prop}
 \label{ventosul}
    Let $\Lambda$ be a row-finite $k$-graph with no sources. Suppose that we have a $\Lambda$-projective system on $(X, \mu)$, and let 
 $\{T_\lambda:\lambda\in\Lambda\}$  be the associated representation of $C^*(\Lambda)$ on $L^2(X, \mu)$. If the representation $\{T_\lambda:\lambda\in\Lambda\}$ is
      irreducible, then the coding maps $\tau^n$ are jointly ergodic with respect to the measure $\mu$.    
     \end{prop}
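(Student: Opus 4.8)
The plan is to argue by contraposition: I will show that if the coding maps $\{\tau^n\}_{n \in \N^k}$ fail to be jointly ergodic, then the representation $\{T_\lambda\}$ cannot be irreducible, by exhibiting a nontrivial projection in its commutant. Thus the starting point is to unpack the negation of joint ergodicity (Definition \ref{def:ergodic-maps}): this produces a measurable set $A \subseteq X$ that is invariant under every coding map, i.e.~$\mu(A \Delta (\tau^n)^{-1}(A)) = 0$ for all $n \in \N^k$, and yet satisfies both $\mu(A) > 0$ and $\mu(X \setminus A) > 0$.

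Next I would translate set-invariance into function-invariance. Since $\chi_A \circ \tau^n = \chi_{(\tau^n)^{-1}(A)}$, and $\mu(A \Delta (\tau^n)^{-1}(A)) = 0$ gives $\chi_{(\tau^n)^{-1}(A)} = \chi_A$ a.e., we conclude $\chi_A \circ \tau^n = \chi_A$ a.e.~for every $n \in \N^k$. In particular $\chi_A \in L^\infty(X, \mu)$ is jointly invariant for the coding maps.

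By Proposition \ref{prop:irred-implies-ergodic}, multiplication by any such jointly invariant function commutes with every $T_\lambda$, so the bounded operator $M_{\chi_A}$ belongs to the commutant of $\{T_\lambda : \lambda \in \Lambda\}$. (This direction does not require the topological hypotheses of Proposition \ref{prop:irred-implies-ergodic} and can be checked directly from the formula $T_\lambda(f) = f_\lambda \cdot (f \circ \tau^{d(\lambda)})$: the identity $h \circ \tau^{d(\lambda)} = h$ a.e.~immediately yields $M_h T_\lambda = T_\lambda M_h$.) Finally, I would observe that $M_{\chi_A}$ is a self-adjoint projection that is \emph{not} a scalar multiple of the identity, since $M_{\chi_A} = cI$ would force $\chi_A = c$ a.e., incompatible with $\mu(A) > 0$ and $\mu(X \setminus A) > 0$. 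Hence the commutant contains a nonscalar operator, so the representation is reducible, which establishes the contrapositive and thus the proposition.

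I expect essentially no serious obstacle here: the substantive analytic work—identifying the commutant with multiplication operators and relating invariant functions to ergodicity—has already been packaged into Lemma \ref{lem:ergodic-invariant} and Proposition \ref{prop:irred-implies-ergodic}. The only points requiring genuine care are the routine (but necessary) passage from invariance of the set $A$ to invariance of the indicator function $\chi_A$, and the verification that $M_{\chi_A}$ is genuinely nonscalar, which is exactly where the two strict inequalities $\mu(A) > 0$ and $\mu(X \setminus A) > 0$ are used.
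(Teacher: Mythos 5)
Your proposal is correct and is essentially the paper's own proof in contrapositive form: both arguments rest on the single key observation that a jointly invariant set $A$ makes $M_{\chi_A}$ a projection commuting with every $T_\lambda$ (hence, being self-adjoint, with every $T_\lambda^*$), so that irreducibility forces $M_{\chi_A}$ to be a scalar, i.e.\ $\mu(A)=0$ or $\mu(X\setminus A)=0$. You were also right to check the commutation directly from $T_\lambda(f)=f_\lambda\cdot(f\circ\tau^{d(\lambda)})$ rather than invoking Proposition~\ref{prop:irred-implies-ergodic}, whose topological hypotheses (locally compact, $\sigma$-compact, Radon) are not assumed in Proposition~\ref{ventosul}; the paper reaches the same commutation conclusion, though by a longer computation involving the sets $\tau_\lambda(A)$, which your one-line argument shows is not needed.
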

\begin{proof}

Suppose  the representation generated by $\{T_\lambda\}_\lambda$ is irreducible, and suppose  $A \subseteq X$  satisfies $\mu( A \Delta (\tau^{n})^{-1}(A) ) = 0$ for all $n$.  Then both $(\tau^n)^{-1}(A) \backslash A$ and $A \backslash (\tau^n)^{-1}(A)$ have measure zero, for any $n\in \N^k$, and consequently 
 \[ M_{\chi_A \circ \tau^n } = M_{\chi_{(\tau^{n})^{-1}(A) } }=  M_{\chi_{(\tau^n)^{-1}(A) \cap A} }= M_{\chi_A}.\]
 
 It now follows from the definition of the operators $T_\lambda$ that, for all $\lambda \in \Lambda$, $M_{\chi_A}$ commutes with $T_\lambda$ and $T_\lambda^*$. {To see this, recall that 
 \[ T_\lambda M_{\chi_A}(f) = f_\lambda \cdot (\chi_A \circ \tau^{d(\lambda)}) \cdot (f \circ \tau^{d(\lambda)})\]
 while $M_{\chi_A} T_\lambda(f) = \chi_A \cdot f_\lambda \cdot (f \circ \tau^{d(\lambda)})$, so since $f_\lambda$ is supported on $R_\lambda$ it suffices to show that $\chi_{A \cap R_\lambda}  = \chi_{R_\lambda}( \chi_A \circ \tau^{d(\lambda)}) $.  Observe that 
 \[ \chi_{R_\lambda} (\chi_A \circ \tau^{d(\lambda)}) = \chi_{\tau_\lambda(A)},\]
 and consequently it suffices to show that $\mu((A \cap R_\lambda) \Delta \tau_\lambda(A)) = 0$.  
 
 By definition, $(A \cap R_\lambda) \Delta \tau_\lambda(A) = \{ \tau_\lambda (z) \in A : z \not\in A\} \cup \{ \tau_\lambda( z) \not \in A: z \in A\}$.  Our hypothesis that $A$ is invariant implies that 
 \begin{align*}
 0 = \mu(A \Delta (\tau^{d(\lambda)})^{-1}(A)) &  = \mu \left( \{ y \in A: y \not= \tau_\eta( z) \text{ for any } z \in A , d(\eta) = d(\lambda) \} \cup \{ y \not\in A: \tau^{d(\lambda)}(y) \in A\}\right)
 \end{align*}
 Restricting the sets described in the previous equation to $R_\lambda$ will not increase their measure, so we also have  
 \begin{align*}
 0 &=   \mu\left( \{ y \in A \cap R_\lambda: y \not= \tau_\lambda(z) \text{ for  any }z \in A\} \cup \{ y \in R_\lambda \backslash A : \tau^{d(\lambda)}(y) \in  A\} \right) \\
 &= \mu \left( \{ \tau_\lambda(z) \in A: z \not\in A\} \cup \{ \tau_\lambda(z) \not\in A : z \in A\} \right)
 \end{align*}
 by using the fact that $\tau^{d(\lambda)} \circ \tau_\lambda = id|_{D_{s(\lambda)}}$ a.e.~and hence $\tau_\lambda$ is injective a.e..   We conclude that $T_\lambda$ commutes with $M_{\chi_A}$ as claimed.
 
 To see that $T_\lambda^*$ also commutes with $M_{\chi_A}$, we use the fact that $T_\lambda M_{\chi_A} = M_{\chi_A} T_\lambda$.  Thus, for any $f, g \in L^2(X, \mu)$,
 \[ \langle T_\lambda M_{\chi_A} f, g \rangle = \langle M_{\chi_A} T_\lambda f, g \rangle = \langle f, T_\lambda^* M_{\chi_A} g\rangle
\] 
as $M_{\chi_A}$ is self-adjoint.  On the other hand, we also have  $\langle T_\lambda M_{\chi_A}  f, g \rangle = \langle f,  M_{\chi_A} T_\lambda^* g \rangle,$ so for all $f, g \in L^2(X, \mu),$
\[ \langle f, (T_\lambda^* M_{\chi_A} - M_{\chi_A} T_\lambda^*) g\rangle = 0.\]
It follows that $T_\lambda^* M_{\chi_A} - M_{\chi_A} T_\lambda^* = 0$, so $M_{\chi_A}$ commutes with $T_\lambda^*$ as well, for all $\lambda \in \Lambda$.
 }

  As the representation is irreducible, this implies that $M_{\chi_A} \in \C 1 = \C M_{\chi_{X}}$.
  Since $M_{\chi_A}$ is a projection, it follows that either $\mu(A) = 0$ (if $M_{\chi_A} = 0$) or $\mu(A^c) = 0$ (if $M_{\chi_A} = 1 = 
  M_{\chi_{X}}$).
 Thus, the operators $\tau^n$ are jointly ergodic with respect to the measure $\mu$.
 \end{proof}
 
 Next, we give an example that the converse of the above proposition does not hold in general.

\begin{example}\label{seal} Let $E$ be the graph with one vertex $v$ and one edge $e$. We will build a semibranching function system for $E$ on the set $\{0,1\}$ with counting measure.
Let $R_{e}=\{0,1\}=D_v$ and define the prefixing map $f_e:D_{v} \rightarrow R_e$ by $f_e(x)=x+1 \mod 2$. Then $\{D_{v},R_{e},f_e\}$ is a branching system in the sense of \cite[Definition~3.6]{goncalves-li-royer-SBFS}. By \cite[Proposition~3.12 and Remark~3.11]{goncalves-li-royer-SBFS}, this branching system gives rise to a semibranching function system. 
Let $\pi$ be the induced representation of $C^*(\Lambda)$ on $\ell^2(\{ 0,1\})$. 
By Proposition~\ref{prop:vertex-prop-implies-no-monic-repns} this representation is not monic. Hence, Theorem~\ref{vinhosoon} implies that $\pi$ is not irreducible.  Indeed $\ell^2\{0\}$ and $\ell^2\{1\}$ are invariant subspaces of $\ell^2\{0,1\}$. On the other hand, there are no nonempty proper subsets of $\{0,1\}$ which are invariant by all of the powers of the coding map.
\end{example}

\begin{thm} 
	\label{prop:cofinal}
	Let $\Lambda$ be a row-finite $k$-graph with no sources. Suppose $\Lambda$ is not cofinal (Definition \ref{def:cofinal}).
Then there are no irreducible representations of $C^*(\Lambda)$ arising from $\Lambda$-semibranching function systems.
\end{thm}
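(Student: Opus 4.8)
The plan is to prove the contrapositive by way of Proposition~\ref{ventosul}. Since that proposition shows every irreducible $\Lambda$-projective representation on $(X,\mu)$ has jointly ergodic coding maps, it suffices to produce, from the failure of cofinality, a set $A\subseteq X$ with $\mu(A\Delta(\tau^n)^{-1}(A))=0$ for all $n$ and with $0<\mu(A)<\mu(X)$, for an \emph{arbitrary} $\Lambda$-semibranching function system. By Definition~\ref{def:cofinal}, non-cofinality yields $x\in\Lambda^\infty$ and $v\in\Lambda^0$ so that no $y\in\operatorname{Orbit}(x)$ has range $v$; unwinding the description $\operatorname{Orbit}(x)=\{\lambda\sigma^n(x)\}$, whose ranges are exactly $\{r(\lambda):s(\lambda)=x(n)\}$, this says precisely that $w\Lambda x(n)=\emptyset$ for $w=v$ and all $n\in\N^k$, i.e.\ $v$ admits no path to any vertex on $x$.

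First I would package the combinatorics into a vertex set. Put
\[ H=\{\,w\in\Lambda^0 : w\Lambda x(n)=\emptyset \text{ for all } n\in\N^k\,\}. \]
I would verify that $H$ is hereditary (if $w\in H$ and $\lambda\in w\Lambda$, any path $s(\lambda)\to x(n)$ composes with $\lambda$ to reach $x(n)$ from $w$, so $s(\lambda)\in H$) and saturated (if some $v\Lambda^n$ has all sources in $H$ but $v\notin H$, extend a path $v\to x(m)$ along $x$ until its degree dominates $n$, factor off a degree-$n$ initial segment whose source is forced into $H$, and contradict that this segment still reaches $x$). Crucially $v\in H$ gives $H\neq\emptyset$, while $x(0)\notin H$ gives $H\neq\Lambda^0$.

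Next I would transport $H$ to the measure space. Using the identity $D_w=\bigcup_{\lambda\in w\Lambda^n}R_\lambda$ recorded after Proposition~\ref{prop:vertex-prop-implies-no-monic-repns} and $A_0:=\bigcup_{w\in H}D_w$, one computes $(\tau^n)^{-1}(A_0)=_{a.e.}\bigsqcup_{d(\lambda)=n,\ s(\lambda)\in H}R_\lambda$. Set
\[ A=\bigcup_{n\in\N^k}(\tau^n)^{-1}(A_0)=_{a.e.}\bigcup\{\,R_\lambda : s(\lambda)\in H\,\}, \]
the ``tail set'' of points whose associated path eventually enters $H$. Invariance of $A$ I would prove by two inclusions: $(\tau^m)^{-1}(A)\subseteq A$ is immediate from reindexing the union, and $A\subseteq(\tau^m)^{-1}(A)$ follows from hereditariness, since for $s(\mu)\in H$ every $\mu\nu$ with $\nu\in s(\mu)\Lambda^m$ has $s(\mu\nu)=s(\nu)\in H$, whence $R_\mu=_{a.e.}\bigsqcup_\nu R_{\mu\nu}\subseteq(\tau^{d(\mu)+m})^{-1}(A_0)\subseteq(\tau^m)^{-1}(A)$.

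The final and hardest step is nontriviality. Positivity $\mu(A)\ge\mu(D_v)>0$ is clear since $v\in H$ and $\mu(D_v)>0$. The main obstacle, which I expect to require genuinely more than the combinatorial properties of $H$, is the reverse bound $\mu(X\setminus A)>0$: one must show that a positive-measure set of points stays forever over $V_x=\Lambda^0\setminus H$. Concretely, $A\cap D_{x(0)}$ is an increasing union $\bigcup_n\bigsqcup\{R_\mu:\mu\in x(0)\Lambda^n,\ s(\mu)\in H\}$, and I would need the complementary sets $D_{x(0)}\setminus\bigsqcup\{R_\mu:\mu\in x(0)\Lambda^n,\ s(\mu)\in V_x\}$ to retain positive measure in the limit, i.e.\ the non-cofinal obstruction at $x$ must persist measure-theoretically rather than dissipate under the coding dynamics. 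This is the step to scrutinize most carefully, since the purely combinatorial construction of $A$ gives invariance for free but controls neither the rate at which paths enter $H$ nor the mass of the residual set over $V_x$. Once $0<\mu(A)<\mu(X)$ is secured, $A$ witnesses the failure of joint ergodicity of $\{\tau^n\}$, and the contrapositive of Proposition~\ref{ventosul} then shows that no representation arising from a $\Lambda$-semibranching function system can be irreducible.
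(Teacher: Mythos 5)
Your strategy --- proving the contrapositive via Proposition~\ref{ventosul} by extracting a nontrivial invariant set from the failure of cofinality --- is exactly the strategy of the paper's proof, and your combinatorial groundwork is sound: $H$ is hereditary, $A=\bigcup\{R_\lambda : s(\lambda)\in H\}$ is invariant, and $\mu(A)\ge\mu(D_v)>0$. The paper differs only in its choice of set: it takes $A$ to be the smallest set containing $R_{r(x)}$ which is invariant under all coding maps --- since invariance forces closure under both images and preimages of the $\tau^n$, this works out a.e.\ to $\bigcup\{R_\lambda : r(x)\Lambda\, s(\lambda)\neq\emptyset\}$ --- and then disposes of nontriviality in one line: ``Observe that $\mu(A\cap R_v)=0$.'' So the gap in your write-up, the missing proof that $\mu(X\setminus A)>0$, is precisely the step that the paper asserts without argument.

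Your suspicion that this step needs ``genuinely more'' is correct in the strongest sense: it cannot be closed from non-cofinality, for either your $A$ or the paper's. The paper's claim $\mu(A\cap R_v)=0$ requires that no vertex $w$ satisfies both $r(x)\Lambda w\neq\emptyset$ and $v\Lambda w\neq\emptyset$, whereas non-cofinality only says that no vertex $x(n)$ \emph{on the path} satisfies $v\Lambda x(n)\neq\emptyset$. Concretely, consider the $1$-graph with vertices $a,b,c$ and edges $e\in a\Lambda a$, $f\in a\Lambda b$, $g\in c\Lambda b$, $h\in b\Lambda b$. It is not cofinal, since $\operatorname{Orbit}(e^\infty)=\{e^\infty\}$ never has range $b$ or $c$; here your $H=\{b,c\}$ and the paper's set of sources is $\{a,b\}$. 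Put on $\Lambda^\infty=\{e^\infty\}\cup\{e^mfh^\infty : m\ge 0\}\cup\{h^\infty\}\cup\{gh^\infty\}$ the measure with $\mu(\{e^mfh^\infty\})=2^{-m}$, $\mu(\{h^\infty\})=\mu(\{gh^\infty\})=1$, $\mu(\{e^\infty\})=0$; with the standard prefixing and coding maps this is a bona fide $\Lambda$-semibranching function system (all $\mu(Z(v))$ lie in $(0,\infty)$ and all Radon--Nikodym derivatives are positive on the atoms). Every atom of positive mass is carried to $h^\infty$ by some $\sigma^n$, so every invariant set is null or conull; in particular both candidate sets $A$ are conull (the paper's satisfies $\mu(A\cap R_c)=\mu(Z(g))=1\neq 0$), and, by the paper's own Theorem~\ref{thm:ergodic}(c), the associated representation on $L^2(\Lambda^\infty,\mu)$ is \emph{irreducible}. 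Thus no repair of your final step is possible from non-cofinality alone: the statement itself needs a stronger hypothesis. What the argument (yours or the paper's) actually uses is a pair of vertices $v,w$ with $\{z : v\Lambda z\neq\emptyset\}\cap\{z : w\Lambda z\neq\emptyset\}=\emptyset$; then the two invariant sets $\bigcup\{R_\lambda : v\Lambda\, s(\lambda)\neq \emptyset\}$ and $\bigcup\{R_\lambda : w\Lambda\, s(\lambda)\neq \emptyset\}$ are disjoint and each has positive measure, so joint ergodicity, and hence (by Proposition~\ref{ventosul}) irreducibility, fails. That hypothesis holds for the graph of Example~\ref{ex:not-cofinal-but-irred} (take the vertices $u$ and $w$ there) but, as the example above shows, it does not follow from non-cofinality.
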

\begin{proof}
{Let  $x \in \Lambda^\infty$ and $v \in \Lambda^0$ be obtained from the fact that $\Lambda$ is not cofinal.}
Given a $\Lambda$-semibranching function system $\{ \tau^n, \tau_\lambda\}$ on $(X, \mu)$, let $A$ denote the smallest  set which  contains  $R_{r(x)}$ and is invariant with respect to all the coding maps $\tau^n$.  Observe that $\mu(A \cap R_v) = 0$ so, since $ \mu(R_v) > 0$ and $\mu(A) \geq \mu(R_{r(x)}) > 0$, it follows from Proposition~\ref{ventosul}  that any representation associated to a $\Lambda$-semibranching function system will be reducible.
\end{proof}

{\begin{rmk}
However, $\Lambda$-semibranching function systems on non-cofinal $k$-graphs can give rise to irreducible representations if we restrict our attention to a  minimal $\mu$-invariant subset; see Definition~\ref{def:in-min-inv-subsets} and Theorem~\ref{thm:ergodic-inf-path-space} below.
\end{rmk}}

We now conclude this subsection by applying the work of Carlsen et al.~\cite{CKSS} on irreducible representations of $k$-graphs to the setting of $\Lambda$-projective representations. {We remark that when the following definitions occur in Carlsen et al Section 4, they assume $\Lambda^0$ is a maximal tail; we have not invoked this hypothesis as it merely implies that the relation $\lambda \sim \nu \Leftrightarrow (\lambda, \nu) \text{ periodic}$ is an equivalence relation, which is irrelevant for our purposes.}

\begin{defn}
Let $\Lambda$ be a $k$-graph. A pair $(\lambda, \nu) \in \Lambda \times \Lambda$ is a {\em periodic pair} if $s(\lambda) = s(\nu)$ and  $\lambda x = \nu x$ for all $x \in Z(s(\lambda))$.  We set 
\[ \text{Per}(\Lambda) := \{ d(\lambda) - d(\nu): (\lambda, \nu) \text{ periodic}\}.\]
One can check that $\text{Per}(\Lambda)$ is a subgroup of $\Z^k$.

Following \cite{CKSS}, we set $H_{\text{Per}}(\Lambda) $ to be the set of vertices which realize all of $\text{Per}(\Lambda)$: that is, 

\[ H_{\text{Per}}(\Lambda) = \{ v \in \Lambda^0: \text{ if } r(\lambda) = v \text{ and { $\exists m \in \N^k$: }} d(\lambda) - m \in \text{Per}(\Lambda), \text{ then } \exists \mu \in v\Lambda^m \text{ s.t. } (\lambda, \mu) \text{ periodic}\}.\]
\end{defn}
Observe that if $(\lambda, \nu)$ is a periodic pair then $\sigma^{d(\lambda)} = \sigma^{d(\nu)}$ on $Z(\lambda) = Z(\nu)$.  However, we need not have $\tau^{d(\lambda)} = \tau^{d(\mu)}$ on an arbitrary $\Lambda$-semibranching function system.

\begin{example}
\label{ex:periodic-but-not-coding}
Consider the following 1-graph $\Lambda$:
\[
\begin{tikzpicture}[scale=1.5]
 \node[inner sep=0.5pt, circle] (v) at (0,0) {$v_1$};
    \node[inner sep=0.5pt, circle] (w) at (1.5,0) {$v_2$};
    \draw[-latex, thick] (w) edge [out=50, in=-50, loop, min distance=30, looseness=2.5] (w);
    \draw[-latex, thick] (v) edge [out=130, in=230, loop, min distance=30, looseness=2.5] (v);
\draw[-latex, thick] (w) edge [out=150, in=30] (v);
\node at (-0.75, 0) {\color{black} $e$}; 
\node at (0.7, 0.45) {\color{black} $g$};
\node at (2.25, 0) {\color{black} $f$};
\end{tikzpicture}
\]
Define a $\Lambda$-semibranching function system on $[0, 1]$ (equipped with Lebesgue measure) by setting $D_{v_1} = (0, 1/2)$ and $D_{v_2} = (1/2, 1)$, and defining
\[ \tau_e(x) = \tau_g(x) = \frac{x}{2}, \qquad \tau_f(x) = \frac{3}{2} - x.\]
The coding map is then given by $\tau^1(x) = \begin{cases}
2x, & x \in (0, 1/2)\\
\frac{3}{2} - x, & x \in (1/2, 1).
\end{cases}$

One easily checks that 
the hypotheses of a $\Lambda$-semibranching function system are satisfied.  Moreover, $(f, v_2)$ is a periodic pair, but $\tau^1 \not= id$ on $R_{f} \cap R_{v_2} = (1/2, 1)$.
\end{example}

\begin{prop}
\label{prop:periodic-implies-same-meas}
In any $\Lambda$-semibranching function system, if $(\lambda, \nu)$ is a periodic pair, then $\mu (R_\lambda \Delta R_\nu) = 0.$ 
\end{prop}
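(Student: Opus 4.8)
The plan is to prove the operator identity $S_\lambda S_\lambda^* = S_\nu S_\nu^*$ for the representation $\{S_\lambda\}$ attached to the given $\Lambda$-semibranching function system, and then read off the measure-theoretic conclusion from Remark~\ref{rmk:Lambda-proj-is-mult}. The geometric input is the observation that a periodic pair produces \emph{equal} cylinder sets: since $\lambda x = \nu x$ for every $x \in Z(s(\lambda)) = Z(s(\nu))$, and every element of $Z(\lambda)$ (resp.\ $Z(\nu)$) has the form $\lambda x$ (resp.\ $\nu x$), we get $Z(\lambda) = Z(\nu)$ as subsets of $\Lambda^\infty$. All the work then lies in converting this equality of sets into an equality of range projections.

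First I would set $m := d(\lambda) \vee d(\nu) \in \N^k$ and use (CK2)--(CK4) to refine both range projections to degree $m$. Applying (CK4) at the vertex $s(\lambda)$ with exponent $m - d(\lambda) \geq 0$ gives
\[ S_\lambda S_\lambda^* = S_\lambda S_{s(\lambda)} S_\lambda^* = \sum_{\alpha \in s(\lambda)\Lambda^{m-d(\lambda)}} S_{\lambda\alpha} S_{\lambda\alpha}^*, \]
and symmetrically $S_\nu S_\nu^* = \sum_{\beta \in s(\nu)\Lambda^{m-d(\nu)}} S_{\nu\beta} S_{\nu\beta}^*$. The key step is to produce a bijection $\Phi : s(\lambda)\Lambda^{m-d(\lambda)} \to s(\nu)\Lambda^{m-d(\nu)}$ with $\lambda\alpha = \nu\,\Phi(\alpha)$ for all $\alpha$. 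Given $\alpha$, I would pick any $w \in Z(s(\alpha))$, note that periodicity forces $\lambda(\alpha w) = \nu(\alpha w)$, and compare initial segments of degree $m$: this yields $(\lambda\alpha)(0,d(\nu)) = \nu$, so the factorization property produces a unique $\beta := (\lambda\alpha)(d(\nu), m)$ with $\nu\beta = \lambda\alpha$ and $d(\beta) = m - d(\nu)$. Injectivity of $\Phi$ follows by cancelling $\lambda$ (uniqueness in the factorization property), and surjectivity follows by running the same argument for the periodic pair $(\nu, \lambda)$.

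With $\Phi$ in hand, the two refined sums are reindexings of one another: since $\lambda\alpha = \nu\,\Phi(\alpha)$ implies $S_{\lambda\alpha} = S_{\nu\Phi(\alpha)}$, summing over the bijection $\Phi$ yields $S_\lambda S_\lambda^* = S_\nu S_\nu^*$. Finally, Remark~\ref{rmk:Lambda-proj-is-mult} identifies these projections as $M_{\chi_{R_\lambda}}$ and $M_{\chi_{R_\nu}}$ respectively; two multiplication operators by indicator functions coincide exactly when the corresponding sets agree up to a null set, so $M_{\chi_{R_\lambda}} = M_{\chi_{R_\nu}}$ gives $\mu(R_\lambda \Delta R_\nu) = 0$, as desired.

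I expect the main obstacle to be the careful verification that $\Phi$ is a well-defined bijection, in particular transferring the periodicity condition (which is phrased via infinite paths) to the finite-degree statement $\lambda\alpha = \nu\,\Phi(\alpha)$ through the factorization property. An alternative, slicker route would bypass $\Phi$ entirely by invoking the projection-valued measure $P$ of Proposition~\ref{prop-proj-valued-measure-gen-case}: since $P$ is well defined on cylinder sets and $Z(\lambda) = Z(\nu)$, one immediately gets $S_\lambda S_\lambda^* = P(Z(\lambda)) = P(Z(\nu)) = S_\nu S_\nu^*$. This second route, however, requires $L^2(X,\mu)$ to be separable, whereas the direct Cuntz--Krieger argument above applies to any $\Lambda$-semibranching function system without such a hypothesis.
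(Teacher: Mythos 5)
Your proposal is correct, but it takes a genuinely different route from the paper. The paper argues entirely at the level of the measure space: for a.e.\ $x \in R_\lambda$ it writes $x = \tau_\lambda(y)$, uses Condition (a) of Definition~\ref{def-lambda-SBFS-1} to decompose $D_{s(\lambda)} = \bigsqcup_{\eta \in s(\lambda)\Lambda^{m}} R_\eta$ (mod null sets) with $m = (d(\lambda)\vee d(\nu)) - d(\lambda)$, so that a.e.\ $y = \tau_\eta(z)$; periodicity then forces $\lambda\eta = \nu\tilde\eta$ for some $\tilde\eta$, whence $x = \tau_{\lambda\eta}(z) = \tau_{\nu\tilde\eta}(z) \in R_\nu$ by Condition (c), giving $R_\lambda \subseteq R_\nu$ a.e.; the reverse inclusion follows by symmetry of periodicity. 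You instead pass through the associated Cuntz--Krieger family: refine both range projections to common degree via (CK4), reindex the two sums along your bijection $\Phi$, and read off the conclusion from $S_\lambda S_\lambda^* = M_{\chi_{R_\lambda}}$ (Remark~\ref{rmk:Lambda-proj-is-mult}). Both arguments hinge on the same combinatorial consequence of periodicity --- that degree-$(d(\lambda)\vee d(\nu))$ extensions of $\lambda$ are precisely such extensions of $\nu$, extracted via the factorization property exactly as you do --- but the paper only needs the \emph{existence} of $\tilde\eta$ (plus symmetry), whereas your reindexing genuinely requires bijectivity of $\Phi$, which you correctly establish. What the paper's route buys is self-containedness: it never invokes the representation, only the semibranching axioms. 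What your route buys is the stronger operator identity $S_\lambda S_\lambda^* = S_\nu S_\nu^*$ stated explicitly, and a cleaner separation between combinatorics and measure theory, with null sets appearing only in the final step; note that this final step (equality of multiplication operators by indicators forcing equality of the sets a.e.) is valid here precisely because $\mu(R_\lambda), \mu(R_\nu) < \infty$ by Definition~\ref{def:SBFS}, so indicators of positive-measure subsets of $R_\lambda \Delta R_\nu$ lie in $L^2(X,\mu)$. Your remark about the projection-valued-measure shortcut is also apt, though the well-definedness of $P$ in Proposition~\ref{prop-proj-valued-measure-gen-case} is itself proved by the same refinement-and-reindexing argument, so that route delegates rather than avoids the work.
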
	

\begin{proof}
Suppose $x \in R_\lambda$.  By definition, $x = \tau_\lambda(y)$ for some $y \in D_{s(\lambda)} = D_{s(\nu)}$.  Let $m = \left( d(\lambda) \vee d(\nu)  \right)- d(\lambda)$; by the definition of a $\Lambda$-semibranching function system, we can write 
\[ D_{s(\lambda)} = \bigsqcup_{\eta \in s(\lambda)\Lambda^m} R_\eta\]
modulo sets of measure zero.  Therefore, without loss of generality, we may assume $y = \tau_\eta(z)$ for some $\eta \in s(\lambda)\Lambda^m$ and some $z \in D_{s(\eta)}$.  

The fact that $(\lambda, \nu)$ is a periodic pair implies, in particular, that  since $d(\lambda \eta)  = d(\lambda ) \vee d(\nu) \geq d(\nu)$, we have $\lambda \eta = \nu \tilde \eta$ for some $\tilde \eta \in s(\nu) \Lambda$.  Consequently, since $x = \tau_\lambda(y) = \tau_{\lambda \eta}(z)$, we must have $x = \tau_{\nu \tilde \eta}(\tilde z)$ for some $\tilde z \in R_{\tilde \eta}$.  It follows that $x = \tau_\nu (\tau_{\tilde \eta}(\tilde z)) \in R_\nu$, and  so almost all $x \in R_\lambda$ are also in $R_\nu$, as claimed.
\end{proof}

\begin{prop}
Let $\Lambda$ be a row-finite, source-free $k$-graph and let $\{T_\lambda\}_{\lambda \in \Lambda}$ be a $\Lambda$-projective representation of $C^*(\Lambda)$ on $L^2(X, \mu)$, with associated functions $f_\lambda$ defined on $R_\lambda$.  If $\{T_\lambda\}_{\lambda \in \Lambda}$ is irreducible {and for every periodic pair $(\lambda, \nu)$ we have $\tau^{d(\lambda)} = \tau^{d(\nu)}$ on $R_\lambda \cap R_\nu$}, then there exists $z \in \T^k$ such that  for every periodic pair $(\lambda, \nu)$ with $r(\lambda) = r(\nu) \in H_{Per}(\Lambda)$, we have 
\[ \frac{f_\lambda}{f_\nu} = z^{d(\lambda) - d(\nu)} \text{ on } R_\lambda \cap R_\nu.\]
\label{prop:irred-implies}
\end{prop}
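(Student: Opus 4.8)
The plan is to first reduce, for a single periodic pair, the operator $T_\lambda T_\nu^*$ to an explicit multiplication operator; then to use ergodicity to force the multiplier to be a constant; and finally to assemble these constants into a single character of $\mathrm{Per}(\Lambda)$ realized by some $z \in \T^k$.

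First I would fix a periodic pair $(\lambda,\nu)$ with $r(\lambda)=r(\nu)=v \in H_{\mathrm{Per}}(\Lambda)$ and compute $T_\lambda T_\nu^*$. By Proposition~\ref{prop:periodic-implies-same-meas} we have $R_\lambda = R_\nu$ up to measure zero, and the standing hypothesis $\tau^{d(\lambda)}=\tau^{d(\nu)}$ on $R_\lambda \cap R_\nu$ shows that the a.e.\ inverses $\tau_\lambda,\tau_\nu$ of these coding maps on $R_\lambda=R_\nu$ coincide; hence $\mu\circ\tau_\lambda^{-1}=\mu\circ\tau_\nu^{-1}$ and so $|f_\lambda|=|f_\nu|$ a.e.\ on $R_\lambda$. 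Using the formula for $T_\nu^*$ from Remark~\ref{rmk:Lambda-proj-is-mult} together with $\tau_\nu\circ\tau^{d(\nu)}|_{R_\nu}=\mathrm{id}$ and the coding hypothesis, a direct computation gives $T_\lambda T_\nu^* = M_{g}$, where $g=g_{\lambda,\nu}:=(f_\lambda/f_\nu)\chi_{R_\lambda}$ is unimodular on $R_\lambda$. The goal of part (a) is then to show that $g$ is a.e.\ constant.

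Next I would derive constancy from irreducibility via ergodicity. Since the representation is irreducible, Proposition~\ref{ventosul} shows the coding maps $\{\tau^n\}$ are jointly ergodic, so by Lemma~\ref{lem:ergodic-invariant}(b) every jointly invariant function is a.e.\ constant. The idea is to build a jointly invariant $h\colon X\to\T$ with $h|_{R_\lambda}=g$ and then conclude $g\equiv z_{\lambda,\nu}\in\T$. To define $h$ I would transport $g$ along coding-map orbits, using three ingredients: the cocycle identity of Definition~\ref{def:lambda-proj-system}(b) together with the coding hypothesis, which yields the consistency $g_{\lambda\eta,\nu\eta}=g_{\lambda,\nu}$ on $R_{\lambda\eta}$ for every $\eta\in s(\lambda)\Lambda$ (so that $g$ is compatible with the refinement $R_\lambda=\bigsqcup_\eta R_{\lambda\eta}$); cofinality of $\Lambda$, which holds because irreducibility forces it by Theorem~\ref{prop:cofinal} and which guarantees that a.e.\ orbit meets $R_\lambda$; and the defining property of $H_{\mathrm{Per}}(\Lambda)$, which supplies the $\mathrm{Per}(\Lambda)$-periodicity needed for $h$ to be consistently defined across an entire orbit. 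I expect this globalization --- establishing that $h$ is well defined independently of the chosen transport and is genuinely $\tau^n$-invariant --- to be the main obstacle, since it is precisely here that the interplay of cofinality, the cocycle identity, and $H_{\mathrm{Per}}(\Lambda)$ must be made precise. Granting it, joint ergodicity gives $g\equiv z_{\lambda,\nu}$, proving part (a).

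Finally I would assemble a single $z$. Composition of periodic pairs shows that if $(\lambda,\nu)$ and $(\lambda',\nu')$ are periodic with $s(\lambda)=r(\lambda')$, then $(\lambda\lambda',\nu\nu')$ is periodic with degree difference $\bigl(d(\lambda)-d(\nu)\bigr)+\bigl(d(\lambda')-d(\nu')\bigr)$; feeding this through the cocycle identity and the coding hypothesis gives the multiplicativity $z_{\lambda\lambda',\nu\nu'}=z_{\lambda,\nu}\,z_{\lambda',\nu'}$. Using $H_{\mathrm{Per}}(\Lambda)$ and cofinality to connect any two periodic pairs that have the same degree difference $p$ and ranges in $H_{\mathrm{Per}}(\Lambda)$, I would show that $z_{\lambda,\nu}$ depends only on $p=d(\lambda)-d(\nu)$, so that $p\mapsto z_{\lambda,\nu}$ is a well-defined homomorphism $\mathrm{Per}(\Lambda)\to\T$. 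Since $\mathrm{Per}(\Lambda)\le\Z^k$ and $\T$ is divisible, this homomorphism extends to a character of $\Z^k$, which has the form $n\mapsto z^n$ for some $z\in\T^k$. Then $f_\lambda/f_\nu=z_{\lambda,\nu}=z^{d(\lambda)-d(\nu)}$ on $R_\lambda\cap R_\nu$ for every such periodic pair, as required.
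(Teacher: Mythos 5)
Your opening computation is correct, and it is essentially the same translation step the paper itself performs: Proposition~\ref{prop:periodic-implies-same-meas} plus the standing coding hypothesis give $\tau_\lambda=\tau_\nu$ a.e., hence $|f_\lambda|=|f_\nu|$ and $T_\lambda T_\nu^*=M_g$ with $g=(f_\lambda/f_\nu)\chi_{R_\lambda}$ unimodular. The genuine gap is your step (b), and it is not something one may ``grant'': to define $h$ by transporting $g$ along grand orbits you must know that $g$ takes the same value at any two points $y_1=\tau_{\lambda\eta_1}(z)$ and $y_2=\tau_{\lambda\eta_2}(z)$ of $R_\lambda$ lying over a common point $z$ --- equivalently, that $g$ is invariant under the return maps of $R_\lambda$ --- and this is, up to ergodicity, the very constancy you are trying to prove. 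The consistency relation $g_{\lambda\eta,\nu\eta}=g_{\lambda,\nu}$ only refines $g$ along a single transport; it never compares two different transports, and neither cofinality nor the definition of $H_{\mathrm{Per}}(\Lambda)$ produces that comparison. What does produce it is the coherent family of periodic partners $\theta\colon v\Lambda^m\to v\Lambda^n$ defined on \emph{all} of $v\Lambda^m$ (this is where $v\in H_{\mathrm{Per}}(\Lambda)$ really enters), assembled so that the function agreeing with $g_{\mu,\theta(\mu)}$ on each $R_\mu$ is invariant; establishing this is essentially the periodicity structure theory of \cite{CKSS} and of an Huef--Laca--Raeburn--Sims. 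That is exactly the content the paper declines to reprove: its proof simply cites \cite[Theorem 5.3(2)]{CKSS}, which hands over a single $z\in\T^k$ with $T_\lambda=z^{d(\lambda)-d(\nu)}T_\nu$ for the relevant pairs, and then performs the translation your first step reproduces. Note also that your plan draws on irreducibility only through joint ergodicity (Proposition~\ref{ventosul}) and cofinality (Theorem~\ref{prop:cofinal}); since these are strictly weaker than irreducibility (Example~\ref{seal}), you owe an argument that they, together with the coding hypothesis, still force the conclusion --- an argument the sketch does not contain.

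Step (c) has a second gap. Two periodic pairs $(\lambda,\nu)$ and $(\lambda',\nu')$ with the same degree difference and $\lambda\neq\lambda'\in v\Lambda^m$ have disjoint cylinders $Z(\lambda)$ and $Z(\lambda')$, hence no common extension, so ``connecting them via cofinality'' does not by itself yield $z_{\lambda,\nu}=z_{\lambda',\nu'}$. (A repair is available: when the pairs are composable, $(\nu'\lambda,\lambda'\nu)$ is a periodic pair of degree difference $0$, which forces $\nu'\lambda=\lambda'\nu$ and hence $z_{\lambda,\nu}\,\overline{z_{\lambda',\nu'}}=1$; but this presupposes step (b) for all the auxiliary pairs, and a further connectivity argument for pairs at different vertices.) In short, your step (a) is right and matches the paper, but steps (b) and (c) --- the actual substance --- would amount to reproving a sizeable portion of the CKSS classification, and the proposal does not do that work.
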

\begin{proof}
Suppose  $\{T_\lambda\}_\lambda$ is irreducible.  As $T_v = M_{\chi_{R_v}}$ is nonzero for each $v \in \Lambda^0,$, the maximal tail $T$ invoked in  \cite[Theorem 5.3(2)]{CKSS} is given in our case by $T = \Lambda^0$.  Therefore, \cite[Theorem 5.3(2)]{CKSS} implies that there exists $z \in \T^k$ (which implements a character of the periodicity group $\text{Per}( \Lambda)$) such that  $T_\lambda = z^{d(\lambda) - d(\nu)} T_\nu$ for all $(\lambda, \nu)$ as in the statement of the proposition.
Moreover, if $(\lambda, \nu) $ is periodic, then by  Proposition \ref{prop:periodic-implies-same-meas}, $R_\lambda = R_\nu$ a.e.~and therefore $f_\lambda, f_\nu$ are defined and nonzero on $R_\lambda \cap R_\nu$.  
From our hypothesis that $\tau^{d(\lambda)} = \tau^{d(\nu)}$ on this domain, it follows that
\[T_\lambda = z^{d(\lambda) - d(\nu)} T_\nu \Leftrightarrow \frac{f_\lambda}{f_\nu} = z^{d(\lambda) - d(\nu)} \text{ on } R_\Lambda \cap R_\nu. \qedhere\]
\end{proof}

However, the converse of this proposition fails.
In the discussion of the following example, we will interpret the 0th power of a loop $\lambda$ to mean the vertex $r(\lambda)$.
\begin{example}
\label{ex:CKSS}
Consider the 1-graph $\Lambda$ from Example \ref{ex:periodic-but-not-coding}.
Observe that in this case $\Lambda^\infty = \{ e^\infty, \{e^n g f^\infty\}_{n \in \N}, f^\infty\}$, and that for all $m, n\in \N$,
\[ Z(f) = Z(f^m) = \{ f^\infty\};  \quad Z(e^n) = \{ e^\infty , \{ e^m g f^\infty\}_{m \geq n} \}; \quad  Z(e^n g) = Z(e^n g f^m) = \{ e^n g f^\infty\}.\]
Therefore, every infinite path save for $e^\infty$ constitutes an clopen set, and so the topology and the Borel $\sigma$-algebra of $\Lambda^\infty$ is precisely the power set of $\Lambda^\infty$.  Moreover, the set of periodic pairs is $\{(g^i f^m, g^i f^n): i \in \{ 0, 1\}, m, n\in \N\},$ so $\text{Per}(\Lambda) = \Z$.
However, the set $H_{\text{Per}}(\Lambda)$ of vertices with maximal periodicity is $\{ v_2\}$, since the paths $e^n$ all have source $v_1$ but do not appear in any periodic pair.

Define a measure $\mu$ on $\Lambda^\infty$ by  
\[ \mu (Z(f)) = 1/4; \qquad \mu(Z(e^n g))=\frac{1}{2^{n+2}}; \qquad \mu(Z(e^n))  = \frac{1}{4} + \frac{1}{2^{n+1}}.\]
Observe that this measure gives $\mu(\{e^\infty\}) = 1/4$.  Furthermore, $(\sigma^n)^{-1}(\{e^\infty\}) = \{e^\infty\}$ for all $n$, so $\mu(\sigma^{-1}(\{e^\infty\}) \Delta \{ e^\infty\}) = 0$, but neither $\{e^\infty\}$ nor $\Lambda^\infty \backslash \{ e^\infty\}$ has measure zero.  
  Theorem 3.12(b) of \cite{FGJKP-monic} (or Proposition \ref{ventosul} above) now implies that any 
  $\Lambda$-projective representation associated to $(\Lambda^\infty, \mu)$ will fail to be irreducible. 

We now compute the Radon--Nikodym derivatives associated to   the $\Lambda$-semibranching function system on $(\Lambda^\infty, \mu)$ given by the usual coding and prefixing maps.  First, the fact that the cylinder sets $Z(g) = Z(g f^m)$ and $Z(f) = Z(f^m)$ consist of single points makes it easy to compute that
\[ \Phi_g = \Phi_{g f^m} = 1 \ \text{ for all } m \in \N; \qquad \Phi_f = \Phi_{f^m} = 1 \ \forall \ m \in \N.\]
However, $\Phi_e$ is not constant on its domain $Z(v_1)$: 
\[ \Phi_e(e^\infty) = \frac{d(\mu \circ \sigma_e)}{d\mu}   (e^\infty) =1, \quad \text{ but } \Phi_e(e^ngf^\infty) = \frac{1}{2}.\]
 
 In particular, if (for each finite path $\lambda$) we set $f_\lambda = \Phi_\lambda^{-1/2}$ on its domain $Z(\lambda)$, then for $(\lambda, \nu) = (f^n, f^m)$ periodic we have 
   \[ \frac{f_\lambda}{f_\nu} = \frac{f_{f^n}}{f_{f^m}} = 1 = 1^{d(\lambda) - d(\mu)}.\]
 Thus, the conclusion of Proposition \ref{prop:irred-implies} holds but the hypothesis fails.
\end{example}

We conclude by observing  that the irreducible representation $\pi_{[f^\infty], 1}$ that Carlsen et al.~\cite{CKSS} associate to the maximal tail $\{ v_1, v_2\}$, the element $1 \in \T^k$, and the cofinal path $f^\infty$ is, up to rescaling, our $\Lambda$-projective representation on $\Lambda^\infty \backslash e^\infty$.

 \subsection{Sufficient conditions for irreducibility}
 \label{sec:sufficient}
 
 { In this subsection we show that, although the necessary conditions for irreducibility described in the previous subsection are not necessarily sufficient, for $\Lambda$-projective representations on the infinite path space of a $k$-graph the situation is different. For example, we obtain a converse to Proposition~\ref{ventosul} in Theorem~\ref{thm:ergodic} below (with $E =X = \Lambda^\infty$).} 
 {If $\Lambda$ has no sinks, then we obtain an alternative sufficient condition for irreducibility in Theorem \ref{thm:ergodic-inf-path-space}.  Finally, we identify another sufficient condition in Proposition \ref{thm:OA-irred} by relating the $\Lambda$-projective representation of $C^*(\Lambda)$ on $\Lambda^\infty$ to a representation of a related 1-graph.  A variety of examples indicate that each of these sufficient conditions has a different domain of applicability.}

 \begin{thm} \label{thm:ergodic}
    Let $\Lambda$ be a row-finite $k$-graph with no sources.  
  Suppose that the infinite path space $\Lambda^\infty$ admits a $\Lambda$-projective system on $(\Lambda^\infty,\mu)$, for some {Radon} measure $\mu$ with standard prefixing maps $\{\sigma_\lambda\, :\, \lambda\in\Lambda\}$, coding maps $\{\sigma^n\, :\, n\in\N^k\}$, and functions $\{f_\lambda\,:\,\lambda\in\Lambda\}$ satisfying Conditions (a) and (b) of Definition~\ref{def:lambda-proj-system}, with the measure of all cylinder sets finite.  Let $\{T_\lambda:\lambda\in\Lambda\}$ be the operators given by Equation \eqref{eq:T-lambda} of Proposition \ref{prop:lambda-proj-repn}.  Let $E \subseteq \Lambda^\infty$ satisfy the hypotheses of Proposition \ref{prop:restriction-gives-SBFS}, so that the restriction $\{T_\lambda^E\}_{\lambda \in \Lambda}$ of the $\Lambda$-projective representation to $L^2(E, \mu)$ is again a $\Lambda$-projective representation.
 Then 
 \begin{itemize}
 \item[(a)] The commutant of the operators $\{T_\lambda^E: \lambda \in \Lambda\}$ consists of multiplication
      operators by functions $h$ with $h \circ \sigma^n = h$, $\mu_E$-a.e., for all $n\in \N^k$.  
 \item[(b)] If $\sigma^n$ is ergodic with respect to $\mu_E$ for some $n \in \N^k$, then $\{T_\lambda^E\}_{\lambda \in \Lambda}$ is
      irreducible. 
 \item[(c)] {If $\{\sigma^n\}$ is a jointly ergodic family with respect to $\mu_E$, then $\{T_\lambda^E\}_{\lambda \in \Lambda}$ is irreducible.}

    \end{itemize}
     \end{thm}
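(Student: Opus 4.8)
The plan is to establish part (a) first and then read off (b) and (c) as immediate consequences, since all three parts hinge on identifying the commutant of $\{T_\lambda^E\}$. The substance of (a) is to promote the one-sided containment recorded in Proposition~\ref{prop:irred-implies-ergodic} to an equality: there I already know that the commutant consists of multiplication operators and \emph{contains} every $M_h$ with $h\circ\sigma^n = h$, so what remains is the reverse inclusion, namely that every multiplication operator in the commutant is given by a jointly $\sigma^n$-invariant function.

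For (a), I would first observe that by Proposition~\ref{prop:restriction-gives-SBFS} the restricted data form a genuine $\Lambda$-projective system living on $(\Lambda^\infty,\mu_E)$; since $\Lambda^\infty$ is locally compact, $\sigma$-compact and Hausdorff and $\mu_E$ is again Radon, Proposition~\ref{prop:irred-implies-ergodic} applies verbatim and reduces the problem to analyzing multiplication operators. Given $M_h$ in the commutant, I would compute directly from Equation~\eqref{eq:T-lambda} that
\[ M_h T_\lambda^E f = h\cdot f_\lambda\cdot(f\circ\sigma^{d(\lambda)}), \qquad T_\lambda^E M_h f = f_\lambda\cdot(h\circ\sigma^{d(\lambda)})\cdot(f\circ\sigma^{d(\lambda)}) \]
for every $f\in L^2(\Lambda^\infty,\mu_E)$. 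Because $f_\lambda\neq 0$ $\mu_E$-a.e.\ on $R_\lambda$ by Condition~(a) of Definition~\ref{def:lambda-proj-system}, and $f\circ\sigma^{d(\lambda)}$ can be chosen nonvanishing there, equating these two expressions forces $h = h\circ\sigma^{d(\lambda)}$ a.e.\ on $R_\lambda$.

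The next step is a patching argument, and this is where the semibranching structure does the real work: for a fixed $n\in\N^k$, Condition~(a) of Definition~\ref{def-lambda-SBFS-1} guarantees that the range sets $\{R_\lambda : d(\lambda)=n\}$ exhaust $\Lambda^\infty$ up to a $\mu_E$-null set, and on each of them I have just shown $h = h\circ\sigma^n$. Gluing these identities yields $h\circ\sigma^n = h$ $\mu_E$-a.e.\ for every $n$, so $h$ is jointly invariant; this proves (a).

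Parts (b) and (c) then follow by invoking Lemma~\ref{lem:ergodic-invariant}. By (a), the representation is irreducible precisely when the only jointly $\sigma^n$-invariant functions are the constants. For (c), joint ergodicity together with Lemma~\ref{lem:ergodic-invariant}(b) forces every jointly invariant $h$ to be constant a.e., so the commutant is $\C 1$ and $\{T_\lambda^E\}$ is irreducible. For (b), if a single $\sigma^n$ is ergodic, then any commutant element already satisfies $h\circ\sigma^n = h$, and Lemma~\ref{lem:ergodic-invariant}(a) again makes $h$ constant, giving the same conclusion. The main obstacle I anticipate is the bookkeeping in the patching step---justifying that the fibrewise equalities $h = h\circ\sigma^n$ on the individual sets $R_\lambda$ combine into a single a.e.\ identity on $\Lambda^\infty$---together with the routine but necessary check that $\mu_E$ remains Radon so that Proposition~\ref{prop:irred-implies-ergodic} can be applied after restriction.
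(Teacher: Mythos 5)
Your proposal is correct and follows essentially the same route as the paper's proof: both deduce from Proposition~\ref{prop:irred-implies-ergodic} that any commutant element is some $M_h$, then force $h = h\circ\sigma^n$ on each $R_\lambda$ by testing against a suitable nonvanishing function (the paper's explicit choice is $f = \chi_{Z(s(\lambda))\cap E}$, which is exactly the kind of test function you describe), patch over the decomposition $\Lambda^\infty = \bigsqcup_{d(\lambda)=n} Z(\lambda)$, and finish (b) and (c) via Lemma~\ref{lem:ergodic-invariant}(a) and (b) respectively. The two technical points you flag --- the a.e.\ gluing and the applicability of Proposition~\ref{prop:irred-implies-ergodic} after restriction --- are handled in the paper exactly as you anticipate.
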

      
 \begin{proof}
 First we prove (a). As established in Proposition \ref{prop:irred-implies-ergodic}, if 
  $T \in B(L^2(\Lambda^\infty, \mu_E))$ is an operator in the commutant of the $\Lambda$-projective representation $\{ T^E_\lambda\}_\lambda$, then $T= M_h$ for some $h \in L^\infty(E, \mu)$.  
 We will show that $h \circ \sigma^n = h$ a.e.~for all $n \in \N^k$.  By Proposition~\ref{prop:irred-implies-ergodic}, this completes the proof of (a).  
 
 Since $T$ commutes with $T^E_\lambda$ for all $\lambda \in \Lambda$, for any $f \in L^2(E, \mu)$ we have 
$ T_\lambda T f = T T_\lambda f$, and consequently 
 \[ 
 h\,f_\lambda|_E \,(f \circ \sigma^n) = (h \circ \sigma^n) f_\lambda|_E  (f \circ \sigma^n)\;\;\text{ whenever } d(\lambda)=n,
 \ f \in  L^2(E, \mu).
 \]
Fix $\lambda \in \Lambda^n$ and consider $f = {\chi_{Z(s(\lambda)) \cap E}}$, which is a nonzero element of $L^2(E, \mu)$ by hypothesis.
 The definition of a $\Lambda$-projective system and  the above equation combine  to 
 reveal that $h \circ \sigma^n = h$, $\mu_E$-a.e.~on $Z(\lambda)$. Since $\Lambda^\infty = \bigsqcup_{\lambda \in \Lambda^n} Z(\lambda)$ for any $n \in \N^k$, it follows that $h \circ \sigma^n = h$ a.e.~on $E$.

 For (b),  choose $T = M_h$ in the commutant of the representation $\{ T_\lambda^E\}_\lambda$, so that $h = h \circ \sigma^{n}$ for all $n$.  Ergodicity of one of the coding maps $\sigma^n$ implies, by item (a) of Lemma~\ref{lem:ergodic-invariant}, that $h$ is constant, and so $\{T_\lambda^E\}_\lambda$ is irreducible.
 
 {Finally, (c) follows as (b) above, using item (b) of Lemma~\ref{lem:ergodic-invariant} this time.}
 \end{proof}
 
 \begin{rmk}
 One can take $E = \Lambda^\infty$ in the previous Theorem; in this case,  combining Theorem \ref{thm:ergodic} with Proposition \ref{ventosul}, we have a necessary and sufficient characterization of the irreducibility of  a $\Lambda$-projective representation on $\Lambda^\infty$.
 \end{rmk}

 We obtain another sufficient condition for the irreducibility of the restriction to $L^2(E, \mu)$ of the representation $\{T_\lambda\}_{\lambda \in \Lambda}$ if, instead of requiring that $\mu(E \cap Z(v))$ be nonzero for all vertices $v$, we ask that $E$ satisfy the following definition.  Recall that a $\mu$-measurable set $E$ is invariant with respect to a function $T$ if $\mu(E \Delta T^{-1}(E)) = 0$, and note that if $E$ is invariant with respect to $T$, then $\mu(E \cap T^{-1}(E)) = \mu(E)$.
 
\begin{defn} \label{def:in-min-inv-subsets} Fix a Radon measure $\mu$ on $
	\Lambda^\infty$.  A Borel subset of non-zero measure $A\subseteq \Lambda^\infty$ is \emph{$\mu$-invariant} if  $A$ is invariant with respect to the standard coding maps $\sigma^n$, for all $n \in \N^k$, i.e., $\mu(A\Delta (\sigma^n)^{-1}(A))=0$, $\forall n$. A $\mu$-invariant subset $E\subseteq \Lambda^\infty$ is \emph{minimal $\mu$-invariant}  if  there is no $\mu$-invariant subset $A$ of $E$ with   $\mu(A \Delta E ) \neq 0$.
\end{defn}

      {To obtain our next  results, we restrict our attention to higher rank graphs with {\em no sinks} -- that is, for every $1 \leq i \leq k$, and for every $v \in \Lambda^0$, we have $\Lambda^{e_i}v \not = \emptyset$. If $\Lambda$ has no sinks then, for every $n$, the coding map $\sigma^n: \Lambda^\infty \to \Lambda^\infty$ is surjective.}
      
      \begin{lemma}\label{lem:min-invar-implies}
{Suppose $\Lambda$ is a row-finite, source-free higher-rank graph with no sinks.}
      
     \begin{enumerate} 
     \item[(a)] Let $A\subseteq \Lambda^\infty$, with $\mu(A)>0$. If  $\mu((\sigma^n)^{-1}(A) \Delta A) = 0$ for some $n$,  then $\mu(A \Delta \sigma^n(A)) = 0$. 
     
     \item[(b)] {If $E\subseteq \Lambda^\infty$  is $\mu$-invariant,  $\mu(E)>0$, and $A\subset E$ satisfies $\mu((\sigma^n|_E)^{-1}(A) \Delta A) = 0$ for some $n$,  then $\mu(A \Delta \sigma^n(A)) = 0$. }

     \item[(c)] {If $E$ is minimal $\mu$-invariant then $\{\sigma^n\}$ is a jointly ergodic family in $E$, that is, if $A\subset E$, with $\mu(A \Delta (\sigma^n|_E)^{-1}(A)) = 0$ for all $n$, then either $\mu(A) = 0$ or $\mu(E \backslash A) = 0$.} 
     
      \end{enumerate} 
      \end{lemma}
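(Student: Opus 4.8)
The plan is to prove the three parts in order, reducing (b) to (a) and (c) to (b). The two tools that do all the work are Lemma~\ref{lem:measure-zero-preserved}, which guarantees that each coding map $\sigma^n$ carries $\mu$-null sets to $\mu$-null sets, and the no-sinks hypothesis, which (as noted just before the lemma) makes each $\sigma^n:\Lambda^\infty\to\Lambda^\infty$ surjective. I will also use freely that $\sigma^n$ restricts to a homeomorphism of each cylinder $Z(\lambda)$ with $d(\lambda)=n$ onto $Z(s(\lambda))$, with inverse $\sigma_\lambda$; since $\Lambda^\infty=\bigsqcup_{d(\lambda)=n}Z(\lambda)$, this shows $\sigma^n(A)$ is measurable whenever $A$ is, so every symmetric difference written below makes sense.

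For part (a), set $B:=(\sigma^n)^{-1}(A)$. Surjectivity of $\sigma^n$ gives the identity $\sigma^n(B)=\sigma^n((\sigma^n)^{-1}(A))=A$, and the hypothesis reads $\mu(A\Delta B)=0$. For any map one has the set-theoretic containment $\sigma^n(A)\Delta\sigma^n(B)\subseteq\sigma^n(A\Delta B)$, verified by tracing a point of either one-sided difference back to a preimage lying in $A\setminus B$ or $B\setminus A$. Hence $\sigma^n(A)\Delta A=\sigma^n(A)\Delta\sigma^n(B)\subseteq\sigma^n(A\Delta B)$, and the right-hand side is $\mu$-null by Lemma~\ref{lem:measure-zero-preserved}. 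This yields $\mu(A\Delta\sigma^n(A))=0$; note that the argument never actually uses $\mu(A)>0$.

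For part (b), I would first upgrade the relative hypothesis to an absolute one. Since $A\subseteq E$ we have $(\sigma^n|_E)^{-1}(A)=(\sigma^n)^{-1}(A)\cap E$, and $A\subseteq E$ forces $(\sigma^n)^{-1}(A)\subseteq(\sigma^n)^{-1}(E)$, so invariance of $E$ gives $\mu((\sigma^n)^{-1}(A)\setminus E)\leq\mu((\sigma^n)^{-1}(E)\setminus E)=0$. Combining this with the hypothesis $\mu\big(((\sigma^n)^{-1}(A)\cap E)\Delta A\big)=0$ through the triangle inequality for symmetric differences yields $\mu((\sigma^n)^{-1}(A)\Delta A)=0$; in other words, $A$ is invariant with respect to $\sigma^n$ as a subset of all of $\Lambda^\infty$. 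Applying part (a) (or, in the trivial case $\mu(A)=0$, Lemma~\ref{lem:measure-zero-preserved} directly, since then both $A$ and $\sigma^n(A)$ are null) gives $\mu(A\Delta\sigma^n(A))=0$.

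For part (c), the decisive point is already contained in the reduction above: the intermediate conclusion $\mu((\sigma^n)^{-1}(A)\Delta A)=0$ is exactly the assertion that $A$ is invariant with respect to $\sigma^n$. Thus if $A\subseteq E$ satisfies $\mu(A\Delta(\sigma^n|_E)^{-1}(A))=0$ for \emph{every} $n$ and $\mu(A)>0$, then $A$ is a $\mu$-invariant subset of $E$ in the sense of Definition~\ref{def:in-min-inv-subsets}. Minimality of $E$ then rules out $\mu(A\Delta E)\neq0$, and as $A\subseteq E$ this forces $\mu(E\setminus A)=0$; together with the alternative $\mu(A)=0$ this is precisely joint ergodicity of $\{\sigma^n\}$ on $E$. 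The only genuinely delicate step is the image-versus-preimage bookkeeping in part (a): one must ensure the containment runs in the direction that lets Lemma~\ref{lem:measure-zero-preserved}, which controls only images of null sets, apply—everything thereafter is a routine chain of reductions.
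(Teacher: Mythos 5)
Your proof is correct, and it rests on the same two pillars as the paper's: surjectivity of the coding maps (from the no-sinks hypothesis) and Lemma~\ref{lem:measure-zero-preserved}. The difference is organizational, and it is a real one. In part (a) your packaging via $\sigma^n\bigl((\sigma^n)^{-1}(A)\bigr)=A$ together with the general containment $f(A)\Delta f(B)\subseteq f(A\Delta B)$ is just a compressed form of the paper's explicit two-piece case analysis, so there the arguments coincide. In part (b), however, you genuinely diverge: the paper re-runs the part-(a) containment argument relative to $E$ (first applying (a) to $E$ itself to get $\mu(E\Delta\sigma^n(E))=0$), whereas you prove the cleaner intermediate fact that relative invariance inside an invariant set upgrades to absolute invariance --- $\mu\bigl((\sigma^n)^{-1}(A)\setminus E\bigr)\le\mu\bigl((\sigma^n)^{-1}(E)\Delta E\bigr)=0$ plus the triangle inequality --- and then invoke (a) as a black box; note your route never needs $\mu(E\Delta\sigma^n(E))=0$ at all, only the defining property of invariance. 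This modularity pays off in (c): where the paper runs a separate contradiction argument with explicit set decompositions to show $A$ is $\mu$-invariant, you get that conclusion for free from the reduction in (b) and pass immediately to minimality. Two further points in your favor: you correctly observe that the hypothesis $\mu(A)>0$ in (a) is never used, and you handle the $\mu(A)=0$ edge case in (b) (where (a) as stated does not formally apply) by citing Lemma~\ref{lem:measure-zero-preserved} directly --- a detail the paper glosses over.
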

      {\begin{proof} 
      \begin{enumerate}
 \item[(a)]      
By definition
 \[ A \Delta \sigma^n(A)  = \{ x \in A: x \not= \sigma^n(y) \text{ for all } y \in A\} \cup \{ x \in A^c: x = \sigma^n(y) \text{ for some } y \in A\}\]
 and  $ (\sigma^n)^{-1}(A) \Delta A  = \{ x \in A: \sigma^n(x) \not \in A\} \cup \{ x \in A^c: \sigma^n(x) \in A\}$.  {Assuming that $\Lambda$ has no sinks,} each of the sets in the description of $A \Delta \sigma^n(A)$ above is contained in  the image under $\sigma^n$ of one of the sets making up $(\sigma^n)^{-1}(A) \Delta A. $  To be precise, 
 \[ \sigma^n(\{ x \in A: \sigma^n(x) \not \in A\} ) =  \{ y \in A^c: y =\sigma^n(x) \text{ for some } x \in A\}  \]
 and, since each coding map is surjective by hypothesis,
 \[ \sigma^n(\{ x \in A^c: \sigma^n(x) \in A\}) \supseteq \{ y \in A: y \not= \sigma^n(x) \text{ for any } x \in A\}.\]
 By Lemma~\ref{lem:measure-zero-preserved}, the claim follows.

\item[(b)]  Let $E\subseteq \Lambda^\infty$, with $\mu(E)>0$, be $\mu$-invariant. By item (a), $\mu(E \Delta \sigma^n(E))= 0$ for all $n$. Let $A\subset E$ be such that $\mu((\sigma^n|_E)^{-1}(A) \Delta A) = 0$ for some $n$.
Notice that \[ (\sigma^n|_E)^{-1}(A) \Delta A  = \{ x \in A: \sigma^n(x) \not \in A\} \cup \{ x \in E \setminus A: \sigma^n(x) \in A\},\] and that $A \Delta \sigma^n(A)\subseteq E$ a.e.. Proceeding  as in item (a), we conclude that \[A \Delta \sigma^n(A) \subseteq \sigma^n \left((\sigma^n|_E)^{-1}(A) \Delta A\right) \ \text{ a.e.},\]
 where the inclusion 
 \[ \sigma^n(\{ x \in E\setminus A: \sigma^n(x) \in A\}) \supseteq \{ y \in A: y \not= \sigma^n(x) \text{ for any } x \in A\} \ \text{a.e.}\]
 follows from $A\subseteq E$ and $\mu(E \Delta \sigma^n(E))= 0$.
 The proof now follows as in the proof of item~(a), taking $B := (\sigma^n|_E)^{-1}(A) \Delta A$  and $C : = A \Delta \sigma^n(A) $.

\item[(c)] Let $E$ be a minimal $\mu$-invariant set, and let $A\subset E$ be such that $\mu(A)>0$ and $\mu(A \Delta (\sigma^n|_E) ^{-1}(A)) = 0$ for all $n$. Consequently, $\mu(\{ x \in A: \sigma^n(x) \not \in A\}) = 0$ and $\mu(\{ x \in E \backslash A: \sigma^n(x) \in A\})  = 0$. If $A$ is not $\mu$-invariant, then there exists $n\in \N^k$ such that $\mu(A \Delta (\sigma^n)^{-1}(A)) > 0$, and consequently  $\mu(\{ x \in A^c: \sigma^n(x) \in A\}) > 0$.  As $E$ is $\mu$-invariant,  $\mu(\{ x \in E^c: \sigma^n(x) \in A \subseteq E\})  = 0$.  We conclude that $\mu(\{ x \in E \backslash A : \sigma^n(x) \in A\}) > 0$, which contradicts the fact that $\mu(A \Delta (\sigma^n|_E)^{-1}(A)) = 0$. In other words, $A$ must be $\mu$-invariant. By the minimality of $E$ it follows that $\mu(E\setminus A)=\mu(E \Delta A)=0$, as desired.

\end{enumerate}
      \end{proof}

	{\begin{rmk}
	Lemma \ref{lem:min-invar-implies} need not hold for $k$-graphs with sinks. For a simple example, consider the graph $E$ with vertices $v_1, v_2$ and edges $e_1, e_2$ such that $s(e_1)=r(e_1)=s(e_2)=v_1$ and $r(e_2)=v_2$. Then $\Lambda^\infty = \{e_1^\infty, e_2 e_1^{\infty}\}$. Let $\mu$ be a measure on $\Lambda^\infty$ satisfying $\mu(\{ e^\infty_1\}) > 0$ and $\mu(\{e_2e^\infty_1\}) = 0$. Then none of the conclusions of the above lemma hold.  For example, for any $n \not= 0$,  $\{ e_2 e_1^\infty\} \Delta \sigma^n \{ e_2 e_1^\infty\} = \Lambda^\infty $ has positive measure, while $(\sigma^n)^{-1}(\{e_2 e_1^{\infty}\}) \Delta \{ e_2 e_1^\infty\} = \{e_2 e_1^\infty\}$ has measure zero.
	\end{rmk}
	}
	
 \begin{thm}\label{thm:ergodic-inf-path-space}
   Let {$\Lambda$ be a row-finite, source-free $k$-graph with no sinks, and} {$\mu$ a Radon measure on $\Lambda^\infty$ which gives rise to a $\Lambda$-projective system.  Suppose} $E\subseteq \Lambda^\infty$ is a minimal $\mu$-invariant subset. Then the restriction of $\{T_\lambda:\lambda\in\Lambda\}$ to $L^2(E,\mu)$ gives an irreducible representation of $C^*(\Lambda)$.
\end{thm}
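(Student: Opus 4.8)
The plan is to run the commutant computation of Theorem~\ref{thm:ergodic}, but to adapt it so that it no longer needs the hypothesis of Proposition~\ref{prop:restriction-gives-SBFS} that $E$ meet every vertex cylinder --- which is precisely the condition a minimal $\mu$-invariant set may violate when $\Lambda$ is not cofinal --- and then to feed in the joint ergodicity supplied by Lemma~\ref{lem:min-invar-implies}(c). Thus the theorem cannot be obtained by citing Theorem~\ref{thm:ergodic} off the shelf; the content is to re-derive the commutant description directly on $L^2(E,\mu)$.

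First I would check that $L^2(E,\mu)$ is an invariant subspace for $\{T_\lambda\}_{\lambda\in\Lambda}$, so that the restriction is a genuine subrepresentation of $C^*(\Lambda)$ (and is nonzero, since $\mu(E)>0$). As $E$ is $\mu$-invariant, Lemma~\ref{lem:min-invar-implies}(a) gives $\mu(E\,\Delta\,\sigma^n(E))=0$ for all $n$, so that $(\sigma^{d(\lambda)})^{-1}(E) =_{a.e.} E =_{a.e.} \sigma^{d(\lambda)}(E)$. Using $T_\lambda f = f_\lambda\cdot(f\circ\sigma^{d(\lambda)})$ one sees that $T_\lambda f$ can be nonzero only where $\sigma^{d(\lambda)}(x)\in E$, i.e.\ on $(\sigma^{d(\lambda)})^{-1}(E)=_{a.e.}E$; and using the formula for $T_\lambda^*$ from Remark~\ref{rmk:Lambda-proj-is-mult} together with $\sigma^{d(\lambda)}\circ\sigma_\lambda=\id$ a.e., one sees that $\sigma_\lambda(x)\in E$ forces $x=\sigma^{d(\lambda)}(\sigma_\lambda(x))\in\sigma^{d(\lambda)}(E)=_{a.e.}E$. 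Hence both $T_\lambda$ and $T_\lambda^*$ preserve $L^2(E,\mu)$ up to null sets.

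Next I would identify the commutant of the restricted family $\{T_\lambda^E,(T_\lambda^E)^*\}$. Since $T_\lambda^E(T_\lambda^E)^* = M_{\chi_{Z(\lambda)\cap E}}$ and the sets $\{Z(\lambda)\cap E\}$ generate the Borel $\sigma$-algebra of $E$, the maximal-abelian-subalgebra argument used in Proposition~\ref{prop:irred-implies-ergodic} shows that any operator commuting with $\{T_\lambda^E,(T_\lambda^E)^*\}$ is a multiplication operator $M_h$ with $h\in L^\infty(E,\mu)$. To extract invariance of $h$, I would repeat the computation in the proof of Theorem~\ref{thm:ergodic}(a): commuting $M_h$ with $T_\lambda^E$ forces $h\circ\sigma^{d(\lambda)}=h$ a.e.\ on $Z(\lambda)\cap E$ whenever $\mu(Z(s(\lambda))\cap E)>0$. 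The one genuinely new point --- the step that replaces the missing hypothesis of Proposition~\ref{prop:restriction-gives-SBFS} --- is that the remaining cylinders are $\mu_E$-null: if $\mu(Z(s(\lambda))\cap E)=0$, then since $Z(\lambda)\cap E \subseteq_{a.e.} \sigma_\lambda(Z(s(\lambda))\cap E)$ and the change of variables gives $\mu(\sigma_\lambda(B))=\int_B\Phi_\lambda\,d\mu$, we get $\mu(Z(\lambda)\cap E) \leq \int_{Z(s(\lambda))\cap E}\Phi_\lambda\,d\mu = 0$. These cylinders therefore contribute nothing to $E=_{a.e.}\bigsqcup_{\lambda\in\Lambda^n}(Z(\lambda)\cap E)$, and summing over $\lambda\in\Lambda^n$ yields $h\circ\sigma^n=h$ a.e.\ on $E$ for every $n\in\N^k$; that is, $h$ is jointly invariant for $\{\sigma^n|_E\}$.

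Finally, because $E$ is minimal $\mu$-invariant, Lemma~\ref{lem:min-invar-implies}(c) gives that $\{\sigma^n\}$ is jointly ergodic on $E$, and Lemma~\ref{lem:ergodic-invariant}(b) then forces the jointly invariant function $h$ to be constant $\mu_E$-a.e. Hence the commutant of the restricted representation is $\C\cdot\id$, and so it is irreducible. I expect the main obstacle to be exactly this commutant identification on $E$: one must verify both that $L^2(E,\mu)$ is invariant and that the cylinders $Z(\lambda)$ with $\mu(Z(s(\lambda))\cap E)=0$ are themselves $\mu_E$-null, so that the relation $h\circ\sigma^n=h$ still propagates across all of $E$ despite $E$ possibly failing to meet every vertex cylinder.
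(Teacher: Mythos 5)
Your proposal is correct and follows essentially the same route as the paper's proof: show $L^2(E,\mu)$ is invariant, identify the commutant as multiplication operators $M_h$, derive $h\circ\sigma^n=h$ a.e.\ on $E$ cylinder by cylinder, and then invoke Lemma~\ref{lem:min-invar-implies}(c) together with Lemma~\ref{lem:ergodic-invariant}(b) to force $h$ constant. Your explicit treatment of the cylinders with $\mu(Z(s(\lambda))\cap E)=0$ via the change-of-variables formula is just the contrapositive of the paper's observation that $\mu(E\cap Z(\lambda))\neq 0$ forces $\mu(E\cap Z(s(\lambda)))\neq 0$, so no genuinely new ingredient is involved.
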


\begin{proof}
We first show that $T_\lambda (L^2(E,\mu))\subseteq L^2(E,\mu)$.  By Lemma~\ref{lem:min-invar-implies},  if $\mu((\sigma^n)^{-1}(E) \Delta E) = 0$ for all $n$, then $\mu(E \Delta \sigma^n(E)) = 0$ for all $n$ as well.   
If we fix $f \in L^2(E, \mu)$ and $\lambda \in \Lambda$, then Condition (a) of Definition \ref{def:lambda-proj-system} implies that 
\begin{align*}
\int_E | T_\lambda(f)|^2 \, d\mu &= \int_E |f_\lambda|^2 \cdot \left| f \circ \sigma^{d(\lambda)}\right |^2 \, d\mu = \int_E \left |f \circ \sigma^{d(\lambda)}\right |^2 \frac{d(\mu \circ (\sigma_\lambda)^{-1})}{d\mu} d\mu \\
&= \int_{E}   \left|f \circ \sigma^{d(\lambda)}\right |^2 d(\mu \circ (\sigma_\lambda)^{-1})  = \int_{(\sigma_\lambda)^{-1}(E)} |f|^2 \, d\mu \leq \int_{\sigma^{d(\lambda)}(E)} |f|^2 \, d\mu \\
& = \int_E |f|^2 \, d\mu < \infty,
\end{align*}
where the second line holds because $(\sigma_\lambda)^{-1}(E) \subseteq \sigma^{d(\lambda)}(E)$.  It follows that  $T_\lambda (L^2(E,\mu))\subseteq L^2(E,\mu)$ as claimed.

{Now, suppose $T $ commutes with $T_\lambda|_{L^2(E, \mu)}$ for all $\lambda \in \Lambda$.  As $T_\lambda|_{L^2(E, \mu)} T_\lambda^*|_{L^2(E, \mu)} =  M_{\chi_{Z(\lambda) \cap E}}$, a careful read of the proof of  Theorem~\ref{thm:ergodic}(a)  reveals that, in this case as well, we can conclude that $T = M_h$ for some $h\in L^\infty(E, \mu)$.  Thus, if $f \in L^2(E, \mu)$ we have 
\begin{equation}
h \cdot f_\lambda \cdot (f \circ \sigma^{d(\lambda)}) = f_\lambda \cdot (h \circ \sigma^{d(\lambda)}) \cdot (f \circ \sigma^{d(\lambda)})
\label{eq:h-vs-h-sigma}
\end{equation}
as functions in $L^2(E, \mu)$. The fact that $f_\lambda$ is nonzero precisely on $Z(\lambda)$ means that both sides of Equation \eqref{eq:h-vs-h-sigma} are supported on $Z(\lambda) \cap E$.

If $\mu(E \cap Z(\lambda)) \not= 0$, the fact that $E$ is invariant with respect to $\sigma^{d(\lambda)}$ implies that $\mu(E \cap Z(\lambda)) = \mu(E \cap Z(s(\lambda)))$, and so $f = \chi_{Z(s(\lambda))}$ is a nonzero element of $L^2(E, \mu)$. Consequently, Equation \eqref{eq:h-vs-h-sigma} implies that $h  = h \circ \sigma^{d(\lambda)},\, \mu_E$-a.e. on $Z(\lambda)$, whenever $\mu_E(Z(\lambda))\not= 0$.

Recall that, for any $n\in \N^k$,  we have $E = \bigsqcup_{\lambda \in \Lambda^n} Z(\lambda) \cap E$.  It now follows that $h = h\circ \sigma^n$ as functions in $L^2(E, \mu)$ for all $n\in \N^k$.  {So, $h$ is jointly invariant with respect to $\{\sigma^n\}$. Since, by Lemma~\ref{lem:min-invar-implies}, the maps $\{\sigma^n\}$ are jointly ergodic in $E$, it follows from Lemma~\ref{lem:ergodic-invariant} that $h$ is constant, 
that is,  $h \in \C \chi_E$.}
Thus the restriction of $\{T_\lambda:\lambda\in\Lambda\}$ to $L^2(E,\mu)$ is indeed an irreducible representation.}
\end{proof}

\begin{rmk} If a monic representation of a sink-free, source-free, row-finite $k$-graph has an atom, then Theorem \ref{thm:ergodic-inf-path-space}
implies that the restriction of the representation to the orbit of this atom is irreducible. We will study representations with atoms in detail in the next section. For non-atomic measures, a minimal invariant set should be seen as an analogue of an orbit of an atom. 
\end{rmk}

{The following example shows that Theorem \ref{thm:ergodic-inf-path-space} may apply when Theorem \ref{thm:ergodic} does not. {Indeed, this example shows that $\Lambda$-semibranching function systems may give rise to irreducible representations even when $\Lambda$ is not cofinal, if we can apply Theorem \ref{thm:ergodic-inf-path-space}.}

\begin{example}
\label{ex:not-cofinal-but-irred}
Consider the following 1-graph $\Lambda$:
\[
\begin{tikzpicture}[scale=1.5]
 \node[inner sep=0.5pt, circle] (v) at (0,0) {$v$};
    \node[inner sep=0.5pt, circle] (w) at (1.5,0) {$w$};
    \node[inner sep=0.5pt, circle] (u) at (1, -0.5) {$u$};
    \draw[-latex, thick] (w) edge [out=50, in=-50, loop, min distance=30, looseness=2.5] (w);
    \draw[-latex, thick] (v) edge [out=130, in=230, loop, min distance=30, looseness=2.5] (v);
\draw[-latex, thick] (w) edge [out=150, in=30] (v);
\draw[-latex, thick] (u) edge [out=180, in=270] (v);
    \draw[-latex, thick] (u) edge [out=-43, in=230, loop, min distance=30, looseness=2.5] (u);
\node at (-0.75, 0) {\color{black} $e$}; 
\node at (0.7, 0.45) {\color{black} $g$};
\node at (2.25, 0) {\color{black} $f$};
\node at (0.3, -0.6) {\color{black} $h$};
\node at (1.5, -0.9) {$k$};
\end{tikzpicture}
\]
Define a measure $\mu$ on $\Lambda^\infty$ by  $\mu(Z(e^n) )= \frac{1}{2^{n+1}}, \mu(Z(e^n g)) = \frac{1}{2^{n+3}} = \mu(Z(e^n h)), \mu(Z(u)) = 
\mu(Z(w)) = 1/4$.  (Recall that for a loop $\lambda$, we denote $\lambda^0 = s(\lambda)$.) One easily checks that the usual prefixing and coding maps $\{\sigma^n, \sigma_\lambda\}$ make $(\Lambda^\infty, \mu)$ a $\Lambda$-semibranching function system. 
Observe that each infinite path in $\Lambda^\infty$ except for $e^\infty$ has nonzero $\mu$-measure.  
Set $E = \{ e^n g f^\infty: n \in \N\} \cup \{ f^\infty\}$.  Since $E \cap Z(u) = \emptyset$,  $E$ does not satisfy the hypotheses of Proposition \ref{prop:restriction-gives-SBFS} (and consequently of Theorem \ref{thm:ergodic}).  However, $E$ is  $\mu$-invariant, because $E = \sigma^{-1}(E)$.  In fact, $E$ is minimal $\mu$-invariant: if $A \subseteq E$ and $\mu(A \Delta E ) = \mu(E \backslash A) \not= 0$, then there is an infinite path $x \in E \backslash A$.  Since each infinite path in $E$ has nontrivial measure, and $E$ is $\mu$-invariant, in order to have $\mu(A \Delta \sigma^{-1}(A)) = 0$ we must have $A = \sigma^{-1}(A)$.
Consequently, $A$ cannot contain any path of the form $\lambda \sigma^n(x)$, for $\lambda$ a finite path in $\Lambda$ and $n \in \N$.  As every path in $E$ is of this form, we conclude that the only proper $\mu$-invariant subset $A$ of $E$ is $A = \emptyset$, so $E$ is minimal $\mu$-invariant.

 Theorem \ref{thm:ergodic-inf-path-space} now implies that any $\Lambda$-projective representation of $C^*(\Lambda)$ on $L^2(\Lambda^\infty, \mu)$ will restrict to an irreducible representation on $L^2(E, \mu)$. 
\end{example}
}

 Our final sufficient condition for the irreducibility of a $\Lambda$-projective representation relies on the link between $k$-graphs and directed graphs which we now describe.

If $\Lambda$ is  a row-finite  $k$-graph, for each $1 \leq i \leq k$,  write $A_i$ for the $\Lambda^0 \times \Lambda^0$ (infinite) matrix with entry $A_i(v, w) = | v \Lambda^{e_i} w|$.  Fix  $J=(j_1,\ldots, j_k)$, with all $j_s\in \N_{> 0}$, and define
$A:=A^J=A_1^{j_1}\ldots A_k^{j_k}$.
Then $A$ can also be interpreted as the matrix of a row-finite 1-graph which we will call $\Lambda_A$.
Note that every finite path $\lambda_A \in \Lambda_A$  can be viewed as a finite path 
${\tilde{\lambda}_A} \in \Lambda$ of degree $|\lambda_A| J$ by using the \lq \lq diagonal construction" (see \cite{FGJKP-spectral-triples} for 
the case $j_r=1, \forall r;$ the  general case is similar). More formally, there is a  functor $\Phi_A$ between the path categories
of the 1-graph $ \Lambda_A$ and the $k$-graph $ \Lambda$
 defined  by 
 \[
 \Phi_A(v_A  ) := {\tilde{v}_A},\quad  \Phi_A(\lambda_A  ) := {\tilde{\lambda}_A}.\quad \forall v_A \in \Lambda_A^0, \lambda_A \in \Lambda_A 
 \]
 
Now recall from  \cite[Lemma 5.1]{FGKPexcursions} (cf.~also \cite[Proposition 2.10]{FGJKP-spectral-triples}) 
that the infinite path space $\Lambda^\infty $  of 
$\Lambda$ is homeomorphic and Borel isomorphic to  
the infinite path space $\Lambda_A^\infty $ of the 
$\Lambda_A$:

\begin{prop}\label{prop:homeo-Borel-structure}
There is a canonical  homeomorphism $\Psi_A$  between $ \Lambda_A^\infty $ and $ \Lambda^\infty $ induced by $\Phi_A$. Moreover, $\Psi_A$ also induces an isomorphism of Borel structures.
\end{prop}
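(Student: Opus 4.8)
The plan is to build $\Psi_A$ directly from the functor $\Phi_A$, show it is a bijection with an explicit inverse, upgrade this to a homeomorphism by tracking cylinder sets, and then read off the Borel isomorphism. Throughout, the factorization property of Definition~\ref{def-higher-rank-gr} is the engine that makes everything work.

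First I would define $\Psi_A$ on an infinite path $x_A \in \Lambda_A^\infty$. Such a path is determined by its compatible increasing family of initial segments $x_A(0,\ell) \in \Lambda_A$, $\ell \in \N$. Applying $\Phi_A$ produces paths $\mu_\ell := \Phi_A(x_A(0,\ell)) \in \Lambda^{\ell J}$ with $\mu_{\ell+1}$ extending $\mu_\ell$. For each $m \in \N^k$, choose $\ell$ large enough that $\ell J \geq m$ (possible since every $j_s > 0$), and use the factorization property to define $\Psi_A(x_A)(0,m)$ to be the unique prefix of $\mu_\ell$ of degree $m$; more generally $\Psi_A(x_A)(p,q)$ is the unique degree-$(q-p)$ factor of $\mu_\ell$ sitting between positions $p$ and $q$. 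The resulting assignment is a degree-preserving functor $\Omega_k \to \Lambda$, i.e.\ an element of $\Lambda^\infty$.

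Next I would establish bijectivity by exhibiting the inverse. Since $\Phi_A$ restricts to a bijection from length-$\ell$ paths of $\Lambda_A$ onto $\Lambda^{\ell J}$ (this is exactly the diagonal construction: the factorization property factors any degree-$\ell J$ path of $\Lambda$ uniquely into $\ell$ composable degree-$J$ pieces, which are precisely the edges of $\Lambda_A$), each $x \in \Lambda^\infty$ yields a compatible family $x_A(0,\ell) := \Phi_A^{-1}(x(0,\ell J))$ defining an infinite path in $\Lambda_A$; one then checks that this assignment is a two-sided inverse to $\Psi_A$. I would then verify that $\Psi_A$ is a homeomorphism by showing $\Psi_A(Z(\lambda_A)) = Z(\Phi_A(\lambda_A)) = Z(\tilde{\lambda}_A)$ for every $\lambda_A \in \Lambda_A$. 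Thus $\Psi_A$ carries the cylinder-set basis of $\Lambda_A^\infty$ bijectively onto the family $\{Z(\eta): \eta \in \Lambda,\ d(\eta) \in \N J\}$. The latter is itself a basis for $\Lambda^\infty$, since any cylinder set decomposes as $Z(\eta) = \bigsqcup_{\gamma \in s(\eta)\Lambda^{\ell J - d(\eta)}} Z(\eta\gamma)$ once $\ell$ is chosen with $\ell J \geq d(\eta)$. A bijection carrying a basis of compact open sets bijectively onto a basis is both continuous and open, hence a homeomorphism. Finally, because $\Lambda^\infty = \bigsqcup_j v_j\Lambda^\infty$ is a countable disjoint union of clopen, second-countable pieces, the $\sigma$-algebra $\mathcal{A}$ coincides with the Borel $\sigma$-algebra $\mathcal{B}_o(\Lambda^\infty)$; since homeomorphisms carry open sets bijectively to open sets, $\Psi_A$ automatically induces an isomorphism of these Borel structures.

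The main obstacle I anticipate is the first step: checking that $\Psi_A(x_A)$ is genuinely a well-defined degree-preserving functor on all of $\Omega_k$, and not merely on the ``diagonal'' degrees $\ell J$. This requires careful use of the uniqueness clause in the factorization property to confirm that the degree-$m$ prefixes extracted from different choices of $\mu_\ell$ agree, and that the assignment $(p,q) \mapsto \Psi_A(x_A)(p,q)$ respects composition in $\Omega_k$. Once this functoriality bookkeeping is settled, the remaining bijectivity, continuity, and Borel steps are routine.
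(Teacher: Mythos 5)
Your proposal is correct and follows essentially the same route as the paper: the paper's proof consists of citing \cite[Lemma 5.1]{FGKPexcursions} (cf.~\cite[Proposition 2.10]{FGJKP-spectral-triples}) together with the single observation that the cylinder sets of degree $nJ$, $n \in \N$, generate the topology of $\Lambda^\infty$ --- which is exactly the basis fact at the heart of your argument. You have simply written out in full the details (well-definedness of $\Psi_A$ via the factorization property, bijectivity via the diagonal construction, and the cylinder-set matching $\Psi_A(Z(\lambda_A)) = Z(\tilde{\lambda}_A)$) that the paper delegates to those references.
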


\begin{proof}
 The result follows from the observation that the cylinder sets of degree $nJ$, $n \in \N$, generate the topology in $\Lambda^\infty$.  
This observation was established for row-finite source-free $k$-graphs in the references given above, and one easily checks that the proofs given there work for all source-free  row-finite higher-rank graphs.
\end{proof}

Moreover, since 
the Cuntz-Krieger relations for $C^*(\Lambda_A)$ are satisfied by 
$\{ S_{ \tilde{\lambda}_A}: \lambda_A \in \Lambda_A \}, $  the  universal property of graph $C^*$-algebras, see \cite{BS},  implies that there exists a $*$-homomorphism
$\varphi_A: C^*(\Lambda_A)\to C^*(\Lambda)$ such that $\varphi_A(S_{\lambda_A}) = S_{\tilde \lambda_A}$. 
Now, let 
$
\pi: C^*(\Lambda) \to \mathcal{B}\big( L^2(\Lambda^\infty, \mu_\pi)\big)
$
be a monic representation of  $C^*(\Lambda)$.

\begin{proposition} The composition  $\pi_A:= \pi \circ \varphi_A$
	is a monic representation of $C^*(\Lambda_A)$.
	\end{proposition}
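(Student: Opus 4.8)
The plan is to unwind the definition of monicity (Definition~\ref{def:lambda-monic-repres}) and verify its two ingredients for $\pi_A$. Since $\varphi_A$ is a $*$-homomorphism and $\pi$ is a representation, the composition $\pi_A = \pi \circ \varphi_A$ is automatically a representation of $C^*(\Lambda_A)$ on the same Hilbert space $L^2(\Lambda^\infty, \mu_\pi)$ on which $\pi$ acts, so there is nothing to check there. For the nonvanishing condition, I would observe that for every $\lambda_A \in \Lambda_A$ we have $\pi_A(S_{\lambda_A}) = \pi(\varphi_A(S_{\lambda_A})) = \pi(S_{\tilde\lambda_A})$, and since $\pi$ is monic, $\pi(s_\mu) \neq 0$ for all $\mu \in \Lambda$; in particular $\pi_A(S_{\lambda_A}) = \pi(S_{\tilde\lambda_A}) \neq 0$.

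The heart of the argument is producing a monic vector for $\pi_A$, and I would use the \emph{same} vector $\xi$ that is monic for $\pi$. Writing $P$ for the projection-valued measure of Proposition~\ref{prop-proj-valued-measure-gen-case}, so that $\pi(s_\lambda s_\lambda^*) = P(Z(\lambda))$, monicity of $\pi$ says $\overline{\operatorname{span}}\{P(Z(\lambda))\xi : \lambda \in \Lambda\} = L^2(\Lambda^\infty, \mu_\pi)$. Because $\varphi_A$ is a $*$-homomorphism, $\pi_A(S_{\lambda_A} S_{\lambda_A}^*) = \pi(S_{\tilde\lambda_A} S_{\tilde\lambda_A}^*) = P(Z(\tilde\lambda_A))$, so it suffices to prove
\[ \overline{\operatorname{span}}\{P(Z(\tilde\lambda_A))\xi : \lambda_A \in \Lambda_A\} = L^2(\Lambda^\infty, \mu_\pi). \]
In other words, I must show that restricting the family of cylinder sets to those of the form $Z(\tilde\lambda_A)$ does not shrink the closed span of the vectors $P(\cdot)\xi$.

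The key combinatorial step identifies this subfamily: by the diagonal construction, $\Phi_A$ restricts to a bijection from length-$n$ paths $\Lambda_A^n$ onto $\Lambda^{nJ}$, so $\{\tilde\lambda_A : \lambda_A \in \Lambda_A\} = \{\lambda \in \Lambda : d(\lambda) \in \N J\}$. Given an arbitrary $\lambda \in \Lambda$, I would choose $n$ large enough that $nJ \geq d(\lambda)$ coordinatewise (possible since every $j_s > 0$), and use the factorization property to write $Z(\lambda) = \bigsqcup_{\mu \in s(\lambda)\Lambda^{nJ - d(\lambda)}} Z(\lambda\mu)$, a \emph{finite} disjoint union by row-finiteness, with each $\lambda\mu$ of degree $nJ$, hence of the form $\tilde\nu_A$. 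Since $P$ is additive on disjoint unions of cylinder sets, $P(Z(\lambda))\xi = \sum_\mu P(Z(\lambda\mu))\xi$ lies in $\operatorname{span}\{P(Z(\tilde\lambda_A))\xi : \lambda_A \in \Lambda_A\}$. This gives the inclusion $\operatorname{span}\{P(Z(\lambda))\xi : \lambda \in \Lambda\} \subseteq \operatorname{span}\{P(Z(\tilde\lambda_A))\xi : \lambda_A \in \Lambda_A\}$; taking closures and invoking monicity of $\pi$ yields equality with $L^2(\Lambda^\infty, \mu_\pi)$, so $\xi$ is a monic vector for $\pi_A$ and $\pi_A$ is monic. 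I do not expect a genuine obstacle here; the only point requiring care is the bijection $\Lambda_A^n \leftrightarrow \Lambda^{nJ}$ underlying Proposition~\ref{prop:homeo-Borel-structure}, which guarantees that the cylinder sets of degree $nJ$ in $\Lambda$ are \emph{exactly} the sets $Z(\tilde\lambda_A)$, and this is precisely what makes the finite-refinement argument go through.
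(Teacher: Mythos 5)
Your proof is correct, but it takes a genuinely different route from the paper's. The paper does not touch the definition of monicity directly: it invokes the forward direction of Theorem~\ref{thm-characterization-monic-repres} to realize $\pi$ as a $\Lambda$-projective representation on $(\Lambda^\infty,\mu_\pi)$ with the standard maps, transports this structure through the homeomorphism $\Psi_A$ of Proposition~\ref{prop:homeo-Borel-structure} (using the intertwining relations $\tau^n\circ\Psi_A=\sigma^{nJ}$ and $\tau_{\lambda_A}\circ\Psi_A=\sigma_{\tilde\lambda_A}$) to exhibit $\pi_A$ as a $\Lambda_A$-projective representation on $(\Lambda_A^\infty,\mu_\pi)$ with the standard coding and prefixing maps, and then applies the converse direction of Theorem~\ref{thm-characterization-monic-repres}. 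You instead verify Definition~\ref{def:lambda-monic-repres} by hand: the same monic vector $\xi$ works for $\pi_A$, because every $P(Z(\lambda))\xi$ is a finite sum of vectors $P(Z(\lambda\mu))\xi$ with $d(\lambda\mu)=nJ$, and the degree-$nJ$ paths are exactly the images $\tilde\nu_A$ under the diagonal construction. Your argument is more elementary and self-contained — it needs only the Cuntz--Krieger relations (finite additivity of the projection-valued measure), row-finiteness, and the bijection $\Lambda_A^n\leftrightarrow\Lambda^{nJ}$, and it makes explicit that the monic vector is unchanged. What the paper's route buys is the concrete identification of $\pi_A$ with a $\Lambda_A$-projective representation on the path space with standard maps, which is exactly the structure the two subsequent propositions rely on (the equality $\mu_{\pi_A}=\mu_\pi$ under $\Psi_A$, the relation $\sigma_A=\sigma_\Lambda^{J}$, and the ergodicity transfer in Proposition~\ref{thm:OA-irred}); your proof would leave that identification still to be established.
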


\begin{proof} By Proposition
\ref{prop:homeo-Borel-structure},  we know that $L^2(\Lambda^\infty, \mu_\pi)  \cong L^2(\Lambda_A^\infty, \mu_\pi) $. {Moreover, if $\sigma_\lambda, \sigma^n$ denote the standard coding and prefixing maps for $\Lambda$, and we denote by $\tau_\lambda, \tau^n$ the coding maps for $\Lambda_A$, we have 
\[ \tau^n \circ \Psi_A = \sigma^{n J}, \qquad \tau_\lambda \circ \Psi_A = \sigma_{\tilde \lambda}.\]
 } Therefore, by applying Theorem  \ref{thm-characterization-monic-repres} we  conclude that $\pi_A$  is monic. 
	\end{proof}

In addition, Proposition~\ref{prop:homeo-Borel-structure} implies the following result.

 \begin{prop}   The measure $\mu_{\pi_A}$  on $\Lambda^\infty_A$, obtained by integrating against the monic vector the projection-valued measure of Theorem~\ref{prop-proj-valued-measure-gen-case},
is the same as the measure $\mu_\pi$, when 
we identify
$\Lambda^\infty$ and $\Lambda_A^\infty$ using Proposition~\ref{prop:homeo-Borel-structure}. In particular 
the two coding maps satisfy $\sigma_{A}=\sigma_\Lambda^{(j_1,j_2\ldots,j_k)}$.
\end{prop}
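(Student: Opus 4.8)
The plan is to transport the problem to $\Lambda^\infty$ via the Borel isomorphism $\Psi_A$ of Proposition~\ref{prop:homeo-Borel-structure}, show that $(\Psi_A)_*\mu_{\pi_A}$ and $\mu_\pi$ agree on a generating family of cylinder sets, and then invoke the uniqueness clause of the Carath\'eodory extension theorem (Theorem~\ref{thm:Cara-extension}) to conclude that they coincide. The coding-map identity will then drop out by specializing the relation $\tau^n\circ\Psi_A=\sigma^{nJ}$ established just above.

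First I would record how $\Psi_A$ acts on cylinder sets. Since $\Psi_A$ is induced by the functor $\Phi_A$ with $\Phi_A(\lambda_A)=\tilde\lambda_A$, a path $x_A\in\Lambda_A^\infty$ begins with $\lambda_A$ exactly when $\Psi_A(x_A)$ begins with $\tilde\lambda_A$; hence $\Psi_A(Z(\lambda_A))=Z(\tilde\lambda_A)$, and as $\lambda_A$ runs over $\Lambda_A$ the path $\tilde\lambda_A$ runs over all paths of $\Lambda$ of degree in $\N J$. Next I would check that a monic vector $\xi$ for $\pi$ is again monic for $\pi_A$, so that both measures are built from the same vector. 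Writing $t_\lambda=\pi(s_\lambda)$, for any $\lambda\in\Lambda$ of degree $m$ I would choose $n$ with $nJ\ge m$ and use (CK4) to write $t_\lambda t_\lambda^*=\sum_{\nu\in s(\lambda)\Lambda^{nJ-m}}t_{\lambda\nu}t_{\lambda\nu}^*$, a finite sum (row-finiteness) in which every $\lambda\nu$ has degree $nJ\in\N J$. Thus $t_\lambda t_\lambda^*\xi$ lies in the span of the $t_{\tilde\lambda_A}t_{\tilde\lambda_A}^*\xi$, so the closed span of the latter contains $\overline{\operatorname{span}}\{t_\lambda t_\lambda^*\xi\}=\mathcal H$, and $\xi$ is monic for $\pi_A$.

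With this established the computation is immediate: recalling that $\pi_A(s_{\lambda_A}s_{\lambda_A}^*)=t_{\tilde\lambda_A}t_{\tilde\lambda_A}^*$, for each $\lambda_A\in\Lambda_A$ I obtain
\[ \mu_{\pi_A}(Z(\lambda_A))=\langle\xi,\pi_A(s_{\lambda_A}s_{\lambda_A}^*)\xi\rangle=\langle\xi,t_{\tilde\lambda_A}t_{\tilde\lambda_A}^*\xi\rangle=\mu_\pi(Z(\tilde\lambda_A))=\mu_\pi(\Psi_A(Z(\lambda_A))). \]
Hence $(\Psi_A)_*\mu_{\pi_A}$ and $\mu_\pi$ agree on every cylinder set of $\Lambda^\infty$ of degree in $\N J$, and by Lemma~\ref{carnival} this extends to the algebra of finite unions of such cylinder sets. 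By Proposition~\ref{prop:homeo-Borel-structure} this algebra generates $\mathcal B_o(\Lambda^\infty)$, and both measures are $\sigma$-finite since they are finite on cylinder sets and $\Lambda^\infty=\bigcup_{v\in\Lambda^0}Z(v)$ is a countable union. The uniqueness half of Theorem~\ref{thm:Cara-extension} then yields $(\Psi_A)_*\mu_{\pi_A}=\mu_\pi$, which is the claimed equality. Finally, the $\Lambda_A$-coding map is $\sigma_A=\tau^1$, so specializing $\tau^n\circ\Psi_A=\sigma^{nJ}$ to $n=1$ gives $\sigma_A\circ\Psi_A=\sigma_\Lambda^{(j_1,\ldots,j_k)}$, i.e.\ $\sigma_A=\sigma_\Lambda^{(j_1,\ldots,j_k)}$ under the identification.

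I expect the only real obstacle to be the uniqueness step: one must be certain that agreement on the special cylinder sets of degree $nJ$ — rather than on all cylinder sets of $\Lambda^\infty$ — already determines the measure, which is precisely what the generation statement in Proposition~\ref{prop:homeo-Borel-structure} together with $\sigma$-finiteness provides. The verification that $\xi$ is simultaneously monic for $\pi$ and for $\pi_A$ is the only other point needing care.
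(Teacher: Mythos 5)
Your proposal is correct, and it is essentially the argument the paper has in mind: the paper states this proposition with no written proof, presenting it as an immediate consequence of Proposition~\ref{prop:homeo-Borel-structure}, and your write-up is precisely the natural filling-in — transport cylinder sets via $\Psi_A(Z(\lambda_A))=Z(\tilde\lambda_A)$, compute $\mu_{\pi_A}(Z(\lambda_A))=\langle\xi,t_{\tilde\lambda_A}t_{\tilde\lambda_A}^*\xi\rangle=\mu_\pi(Z(\tilde\lambda_A))$, and conclude by uniqueness of the extension, since the degree-$nJ$ cylinder sets generate the Borel structure. The one place you genuinely diverge from the paper is the monicity of $\xi$ for $\pi_A$: the paper's preceding proposition obtains monicity of $\pi_A$ abstractly, by noting that the prefixing and coding maps $\sigma_{\tilde\lambda}$, $\sigma^{nJ}$ correspond under $\Psi_A$ to the standard maps of $\Lambda_A$ and then invoking Theorem~\ref{thm-characterization-monic-repres}, whereas you verify directly via (CK4) and row-finiteness that $\overline{\operatorname{span}}\{t_{\tilde\lambda_A}t_{\tilde\lambda_A}^*\xi\}$ already contains every $t_\lambda t_\lambda^*\xi$. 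Your direct argument buys something the paper leaves implicit: it shows the \emph{same} vector $\xi$ is monic for both representations, which is exactly what is needed for the phrase ``integrating against the monic vector'' to define comparable measures $\mu_\pi$ and $\mu_{\pi_A}$ (these measures depend on the choice of monic vector, so this point is not cosmetic). Your cautionary remark about the uniqueness step is also handled correctly: the degree-$nJ$ cylinder sets (including the $Z(v)$, $v\in\Lambda^0$, as degree $0\cdot J$) form a $\pi$-system covering $\Lambda^\infty$ countably by sets of finite measure, so agreement there does determine the measure.
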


Now recall from Theorem \ref{thm:ergodic} that 
the representation $ \pi $ (resp  $ \pi_{A} $) is
irreducible if, and only if, the coding maps $\sigma^n$, $n \in \N^k$  (resp. $\sigma_A^m$, $m \in \N$) 
are jointly ergodic with respect to the measure $\mu_\pi$ (resp. $\mu_{\pi_A}$);  here,  additionally, $\mu_\pi= \mu_{\pi_A}$.
      We thus obtain the following result.

 \begin{prop}\label{thm:OA-irred} Assume, with hypotheses as above,  that the representation $\pi_A$ is irreducible. 
 Then $\pi$ is irreducible.
 \end{prop}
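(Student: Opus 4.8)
The plan is to leverage the machinery built up immediately before this statement, specifically the $*$-homomorphism $\varphi_A : C^*(\Lambda_A) \to C^*(\Lambda)$ together with the fact that the two coding maps are linked by $\sigma_A = \sigma_\Lambda^{(j_1,\dots,j_k)}$ and the two measures coincide, $\mu_\pi = \mu_{\pi_A}$. The cleanest approach is to argue by contrapositive at the level of invariant sets: I will show that if $\pi$ fails to be irreducible, then $\pi_A$ also fails, which contradicts the hypothesis.

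First I would invoke the remark following Theorem~\ref{thm:ergodic} (the case $E = \Lambda^\infty$): since $\pi$ arises from a $\Lambda$-projective system on $(\Lambda^\infty, \mu_\pi)$, irreducibility of $\pi$ is \emph{equivalent} to the joint ergodicity of the coding maps $\{\sigma^n : n \in \N^k\}$ with respect to $\mu_\pi$, and likewise irreducibility of $\pi_A$ is equivalent to joint ergodicity of the single coding map $\sigma_A = \sigma_\Lambda^{J}$ (equivalently the family $\{\sigma_A^m : m \in \N\}$) with respect to $\mu_{\pi_A} = \mu_\pi$. So the entire statement reduces to the purely measure-theoretic implication: \emph{if $\sigma_A = \sigma^J$ is ergodic with respect to $\mu_\pi$, then the whole family $\{\sigma^n\}_{n \in \N^k}$ is jointly ergodic with respect to $\mu_\pi$.}

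The key step is then this ergodicity implication, which I expect to be straightforward. Suppose $A \subseteq \Lambda^\infty$ is jointly invariant for the family $\{\sigma^n\}_{n \in \N^k}$, i.e.\ $\mu_\pi(A \Delta (\sigma^n)^{-1}(A)) = 0$ for every $n \in \N^k$. In particular $A$ is invariant with respect to $\sigma^{e_i}$ for each basis vector $e_i$, and since the coding maps compose via $\sigma^m \circ \sigma^n = \sigma^{m+n}$ (Definition~\ref{def-lambda-SBFS-1}(d)), one checks that $A$ is invariant with respect to $\sigma^J = \sigma^{j_1 e_1 + \cdots + j_k e_k}$; concretely, $\mu_\pi(A \Delta (\sigma^J)^{-1}(A)) \le \sum_{i} \mu_\pi(A \Delta (\sigma^{j_i e_i})^{-1}(A))$ after telescoping, and each summand vanishes because invariance under $\sigma^{e_i}$ propagates to invariance under its iterates $\sigma^{j_i e_i}$. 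Hence $A$ is invariant for $\sigma_A = \sigma^J$, and ergodicity of $\sigma_A$ forces $\mu_\pi(A) = 0$ or $\mu_\pi(\Lambda^\infty \setminus A) = 0$. This is exactly joint ergodicity of $\{\sigma^n\}$.

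Assembling these pieces: since $\pi_A$ is irreducible by hypothesis, the map $\sigma_A$ is ergodic with respect to $\mu_{\pi_A} = \mu_\pi$ by Theorem~\ref{thm:ergodic}; the ergodicity implication just established then yields that $\{\sigma^n\}_{n\in\N^k}$ is jointly ergodic with respect to $\mu_\pi$; and applying Theorem~\ref{thm:ergodic}(c) (with $E = \Lambda^\infty$) gives that $\pi$ is irreducible. The only point requiring a little care — and the most likely place for a hidden subtlety — is the propagation of invariance from $\sigma^{e_i}$ to $\sigma^{j_i e_i}$ and then to the mixed product $\sigma^J$, which relies on Lemma~\ref{lem:measure-zero-preserved} to control how null sets behave under the (generally non-injective) coding maps when passing between $A \Delta (\sigma^{n})^{-1}(A)$ expressions for different $n$. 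I would phrase that telescoping estimate explicitly to make sure no measure-zero set is inadvertently inflated.
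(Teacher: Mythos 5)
Your proposal is correct and follows essentially the same route as the paper: reduce irreducibility to (joint) ergodicity of the coding maps via Theorem~\ref{thm:ergodic} and Proposition~\ref{ventosul}, use $\mu_\pi = \mu_{\pi_A}$ and $\sigma_A = \sigma^J$ to transfer invariant sets between the two pictures, and conclude. One remark: your telescoping detour through the generators $\sigma^{e_i}$ is superfluous, because the set $A$ you consider is by hypothesis invariant under $\sigma^n$ for \emph{every} $n \in \N^k$, and $J$ (indeed every $mJ$, $m \in \N$) is such an $n$; this is exactly how the paper argues, and working with the whole family $\{\sigma_A^m\}_{m \in \N}$ rather than single-map ergodicity of $\sigma_A$ also removes any need to propagate invariance to iterates. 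Had the telescoping been needed, your citation of Lemma~\ref{lem:measure-zero-preserved} would not quite do the job: that lemma controls forward images $\tau^n(B)$ of null sets, whereas the telescoping estimate requires that \emph{preimages} $(\sigma^n)^{-1}(B)$ of null sets be null --- true in this setting because $\mu \circ \sigma_\lambda \ll \mu$ for the prefixing maps, but a different statement.
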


 \begin{proof}

{By Theorem~\ref{thm:ergodic}, it is enough to check that if $B\subset \Lambda^\infty$ satisfies $\mu_\pi \left( (\sigma^{n})^{-1}(B) \Delta B \right)=0$ for all $n\in \N^k$, then either $\mu_\pi(B)=0$ or $\mu_\pi(\Lambda^\infty \setminus B)=0$.  For this, let  $B\subset \Lambda^\infty \cong \Lambda^\infty_A$,  with $ (\sigma^{n})^{-1}(B)= B$ for all $n\in \N^k$;  then in particular 
$ \mu_{\pi_A}((\sigma_A^{m})^{-1}(B)\Delta B) = 0$ for all $m\in \N$.
Since $\pi_A$ is irreducible, Theorem~\ref{thm:ergodic} now implies that
 $\mu_{\pi_A}(B)=0$ or $\mu_{\pi_A}(\Lambda^\infty_A \setminus B)=0$, which also implies  $\mu_\pi(B)=0$ or $\mu_\pi(\Lambda^\infty \setminus B)=0$. }

\end{proof}

 \subsection{Further results on irreducibility}
 \label{sec:random}

{In this subsection we explore a few more results related with the irreducibility of representations arising from $\Lambda$-semibranching function systems.
 Our first result regards the decomposition of a representation as a direct sum of irreducible representations.} For purely atomic representations this follows by restricting the representation to the orbits, see \cite[Corollary~3.4]{FGJKP-atomic} and \cite[Proposition~4.3]{dutkay-jorgensen-atomic}. So we will focus on the case of measures on $\Lambda^\infty$ without atoms (since measures with atoms induce purely atomic representations, see Theorem~\ref{thm:irred-repres-atom-purely-atomic}). In this more general setting, we will need an extra hypothesis to obtain the desired decomposition (see Definition \ref{def:approx-ergodic} below).  
   The section concludes with Proposition \ref{thm:disjoint-repres-general}, which relates the question of when two $\Lambda$-semibranching representations are disjoint to the supports of  the associated measures.
 \begin{defn}
 \label{def:approx-ergodic}
 {Fix a Radon measure $\mu$ on $\Lambda^\infty$.}
 Let $E\subseteq \Lambda^\infty$ be a $\mu$-invariant Borel subset of non-zero measure. We say that $E$ is \emph{$\varepsilon$-approximately ergodic} if there exists $\varepsilon$ such that if $B\subset E$ is invariant and $0<\mu(B)<\mu(E)$, then $\varepsilon< \mu(B) < \mu(E)-\varepsilon$. 
 \end{defn}
 
 \begin{prop}\label{prop:ex-min-ergodic-sets}
Let  $\Lambda$ be a row-finite, source-free $k$-graph with no sinks, and $\mu$ a Radon measure on $\Lambda^\infty$ which gives rise to a $\Lambda$-projective system.  Suppose that there exists an $\varepsilon>0$ such that every $\mu$-invariant subset of non-zero measure is $\varepsilon$-approximately ergodic and that  $\mu(\Lambda^\infty) < \infty.$  Then the representation $\{T_\lambda:\lambda\in\Lambda\}$ splits as a direct sum of irreducible representations.
 \end{prop}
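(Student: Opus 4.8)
The plan is to decompose $\Lambda^\infty$ into countably many minimal $\mu$-invariant subsets and apply Theorem~\ref{thm:ergodic-inf-path-space} to each piece. The engine of the whole argument is the observation that the uniform constant $\varepsilon$, together with the finiteness of $\mu(\Lambda^\infty)$, forces every strictly descending chain of $\mu$-invariant sets to be finite. Indeed, if $E_{i+1} \subsetneq E_i$ is a proper $\mu$-invariant subset (meaning $\mu(E_{i+1}) > 0$ and $\mu(E_i \Delta E_{i+1}) \neq 0$), then $\varepsilon$-approximate ergodicity of $E_i$ gives $\mu(E_{i+1}) < \mu(E_i) - \varepsilon$; hence $\mu(E_m) < \mu(E_0) - m\varepsilon$, and such a chain has length at most $\mu(\Lambda^\infty)/\varepsilon$. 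First I would record this, and deduce that every $\mu$-invariant set of positive measure contains a minimal $\mu$-invariant subset: starting from $E$ and repeatedly passing to a proper $\mu$-invariant subset (one exists precisely when the current set fails to be minimal), the process must terminate at a minimal set.

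Next I would build a maximal family. Let $\{E_i\}_{i\in I}$ be a maximal (with respect to inclusion, via Zorn's lemma) family of pairwise $\mu$-a.e.-disjoint minimal $\mu$-invariant subsets. Since a family of pairwise disjoint positive-measure sets in a finite measure space is at most countable, $I$ is countable, and countable unions of $\mu$-invariant sets are again $\mu$-invariant (the symmetric difference $(\sigma^n)^{-1}(\bigcup_i E_i)\Delta\bigcup_i E_i$ is null, being contained in $\bigcup_i ((\sigma^n)^{-1}(E_i) \Delta E_i)$). I would then argue that $R := \Lambda^\infty \setminus \bigcup_i E_i$ is null: being the complement of a $\mu$-invariant set in the $\mu$-invariant space $\Lambda^\infty$, $R$ is itself $\mu$-invariant, so if $\mu(R) > 0$ it would contain a minimal $\mu$-invariant subset by the first step, contradicting maximality. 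Hence $\Lambda^\infty = \bigsqcup_{i} E_i$ modulo a null set, and correspondingly $L^2(\Lambda^\infty, \mu) = \bigoplus_{i} L^2(E_i, \mu)$.

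Finally I would check that this orthogonal decomposition reduces the representation. Since each $E_i$ is $\mu$-invariant, the argument in the proof of Proposition~\ref{ventosul} shows that $M_{\chi_{E_i}}$ lies in the commutant of $\{T_\lambda\}_{\lambda\in\Lambda}$, so each $L^2(E_i, \mu)$ is a reducing subspace and $\{T_\lambda\}$ splits as the direct sum of its restrictions $\{T_\lambda|_{L^2(E_i,\mu)}\}$. By Theorem~\ref{thm:ergodic-inf-path-space} each summand is irreducible (note that theorem requires only that $E_i$ be minimal $\mu$-invariant, not the hypotheses of Proposition~\ref{prop:restriction-gives-SBFS}), which yields the desired decomposition into irreducibles.

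I expect the main obstacle to be the first step, namely guaranteeing that minimal $\mu$-invariant subsets exist and that they exhaust $\Lambda^\infty$. Without the uniform $\varepsilon$ one could imagine an infinite strictly descending chain of $\mu$-invariant sets whose measures decrease to a positive limit without ever reaching a minimal set; it is precisely the uniform lower bound $\varepsilon$ on each measure drop, combined with $\mu(\Lambda^\infty) < \infty$, that rules this out. The remaining steps (invariance of countable unions and reducibility of the summands) are routine given the results already established.
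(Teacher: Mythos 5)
Your proof is correct and follows essentially the same route as the paper: the same $\varepsilon$-forced termination of descending chains of $\mu$-invariant sets produces minimal $\mu$-invariant pieces, $\Lambda^\infty$ is decomposed (up to a null set) into countably many such pieces, and Theorem~\ref{thm:ergodic-inf-path-space} is applied to each summand of $L^2(\Lambda^\infty,\mu)=\bigoplus_i L^2(E_i,\mu)$. The only difference is organizational: you obtain the exhaustion via a Zorn's-lemma maximal family of pairwise disjoint minimal invariant sets (the residual set being null by maximality), whereas the paper extracts the $E_n$ greedily one at a time, the process terminating because the $E_n$ are disjoint, each has measure greater than $\varepsilon$, and $\mu(\Lambda^\infty)<\infty$.
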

 \begin{proof}
 
 Our goal in this proof is to show that $\Lambda^\infty = \sqcup E_i$, where each $E_i$ is a minimal $\mu$-invariant set.
  Once we have this, the result follows from $L^2(\Lambda^\infty) = \bigoplus L^2(E_i)$ and Theorem~\ref{thm:ergodic-inf-path-space}.

 If the representation given by $\{T_\lambda:\lambda\in\Lambda\}$ is irreducible, we are done. Suppose it is not. Then, {applying Theorem~\ref{thm:ergodic}(c) to the set $E = \Lambda^\infty$}, we see that  there exists a $\mu-$invariant Borel subset $B_1 \subseteq \Lambda^\infty $ such that $0<\mu(B_1)<\mu(\Lambda^\infty)$. 
 Notice that $ \Lambda^\infty \setminus B_1$ is also $\mu$-invariant and $ 0<\mu(\Lambda^\infty \setminus B_1)<\mu(\Lambda^\infty)$. By hypothesis, both $B_1$ and $\Lambda^\infty \setminus B_1$ are $\varepsilon$-approximately ergodic, so  $\varepsilon< \mu(B_1)<\mu(\Lambda^\infty)-\varepsilon$. If $B_1$ is a minimal $\mu$-invariant set, we let $E_1:=B_1$. If not, there is a $\mu$-invariant Borel subset $B_2$ of $B_1$ such that $0<\mu(B_2)<\mu(B_1)$, $B_1 \setminus B_2$ is $\mu$-invariant and $ 0<\mu(B_1\setminus B_2)<\mu(B_1)$. By hypothesis, both $B_2$ and $B_1\setminus B_2$ are $\varepsilon$-approximately ergodic. If $ B_2$ is minimal $\mu$-invariant we let $E_1:=B_2$. If not, then  $\varepsilon< \mu(B_2)<\mu(B_1)-\varepsilon<\mu(\Lambda^\infty)-2\varepsilon$ and proceed inductively. {As $\mu(\Lambda^\infty)$ is finite, this process must terminate; that is,} one of the $B_n$'s must be a minimal $\mu$-invariant set, hence $E_1$ is defined. 
 
 Now, suppose that $E_1=B_N$. Notice that $\mu(E_1)>\varepsilon$ and, furthermore, $\Lambda^\infty \setminus E_1 = (\Lambda^\infty \setminus B_1 )\cup (B_1 \setminus B_2) \cup \ldots \cup (B_{N-1} \setminus B_N)$, which is $\mu$-invariant and has non-zero measure. Applying the same procedure  described in the paragraph above, starting with $\Lambda^\infty \setminus E_1$ instead of $\Lambda^\infty$, we obtain a set $E_2$ such that $\varepsilon<\mu(E_2)$ and $E_2 \subseteq \Lambda^\infty \setminus E_1$. Proceeding inductively, we define the $E_n$. 
 
 Finally, since $\mu(\Lambda^\infty)<\infty$, $\mu(E_n)>\varepsilon$ for every $n$, and the $E_n$ are disjoint, we obtain the desired decomposition into minimal $\mu$-invariant sets, that is, $\Lambda^\infty = \sqcup E_n$.
 \end{proof}

In \cite[Theorem 3.10]{FGJKP-monic} the authors prove that for finite $k$-graphs, and for representations of $C^*(\Lambda$) arising from $\Lambda$-projective systems, on $\Lambda^\infty$, the task of checking
when two representations are equivalent reduces to a measure-theoretical problem. {To be precise, in the finite setting, two $\Lambda$-semibranching representations which arise from measures $\mu, \mu'$ on $\Lambda^\infty$ are disjoint if, and only if, $\mu, \mu'$ are mutually singular.  }As with Theorem~\ref{thm:ergodic}, it is interesting to know if this result holds for general measure spaces. In fact, as before, the result only holds partially, as we show below. 

\begin{prop} (Cf. \cite[Theorem 2.12]{dutkay-jorgensen-monic} and \cite[Theorem 3.10]{FGJKP-monic} )  \label{thm:disjoint-repres-general}
    Let $\Lambda$ be a row-finite $k$-graph with no sources.  
  Suppose that   $L^2(X, \mu)$ and $L^2(X, \mu')$ are two  $\Lambda$-semibranching function systems 
  as in Definitions \ref{def:SBFS} and \ref{def-lambda-SBFS-1},
    with $\sigma$-finite measures $\mu, \mu'$, identical $D_\lambda, R_\lambda,$ and coding maps $\{\tau^n\, :\, n\in\N^k\}$. 
  Choose nonnegative  functions $f_\lambda, f_\lambda'$ satisfying Definition \ref{def:lambda-proj-system} and   
    let $\{T_\lambda:\lambda\in\Lambda\}$ and $\{T_\lambda':\lambda\in\Lambda\}$ be the  
    associated representations of $C^*(\Lambda)$ as in Proposition~\ref{prop:lambda-proj-repn}. 
If the representations  $\{T_\lambda:\lambda\in\Lambda\}$ and $\{T_\lambda':\lambda\in\Lambda\}$ 
      are disjoint, then the two measures $\mu,$ $\mu'$ are mutually singular.       
      
      {Conversely, if $\mu, \mu'$ are  finite measures which are mutually singular and $X = \Lambda^\infty$,  then the associated $\Lambda$-projective representations are disjoint.}     
  \end{prop}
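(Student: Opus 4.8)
The statement has two directions. For the first (disjoint representations $\Rightarrow$ mutually singular measures), I would argue by contrapositive: assuming $\mu$ and $\mu'$ are not mutually singular, I will produce a nonzero intertwiner between $\{T_\lambda\}$ and $\{T_\lambda'\}$, so the representations are not disjoint. By the Lebesgue decomposition, failure of mutual singularity means there is a Borel set $S$ with $\mu(S)>0$ on which $\mu'$ is not singular to $\mu$; restricting attention there, I can find a set of positive measure on which both measures are mutually absolutely continuous, and hence a genuine Radon–Nikodym derivative $g = \frac{d\mu'}{d\mu}$ which is finite, positive, and (after passing to a suitable invariant support set) well-behaved. The natural candidate intertwiner $U\colon L^2(X,\mu')\to L^2(X,\mu)$ is $U(f) = \sqrt{g}\cdot f$ (up to the usual correction by the $f_\lambda, f_\lambda'$ factors), defined on the overlap. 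The key computation is to check $U T_\lambda' = T_\lambda U$ using the explicit formula $T_\lambda(f)=f_\lambda\cdot(f\circ\tau^{d(\lambda)})$ from Equation \eqref{eq:T-lambda}; since the $D_\lambda,R_\lambda$ and the coding maps $\tau^n$ are shared between the two systems, the only discrepancy lives in the ratio $f_\lambda/f_\lambda'$, and the Radon–Nikodym cocycle relation (Condition (a) of Definition \ref{def:lambda-proj-system}, namely $|f_\lambda|^2 = \frac{d(\mu\circ\tau_\lambda^{-1})}{d\mu}$) should make the intertwining identity hold on the common absolutely continuous part. The existence of a nonzero such $U$ contradicts disjointness.

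For the converse (mutually singular finite measures on $X=\Lambda^\infty$ $\Rightarrow$ disjoint representations), the plan is to exploit the special structure of $\Lambda^\infty$ together with the projection-valued measure machinery of Section \ref{sec:Proj-valued-measures}. Mutual singularity gives a Borel partition $\Lambda^\infty = Y \sqcup Y^c$ with $\mu(Y^c)=0=\mu'(Y)$. The crucial point is that both representations restrict the abelian algebra $C_0(\Lambda^\infty)$ to multiplication operators, with $T_\lambda T_\lambda^* = M_{\chi_{Z(\lambda)}}$ (Remark \ref{rmk:Lambda-proj-is-mult}) and likewise for $T_\lambda'$. I would argue that any intertwiner $U\colon L^2(\Lambda^\infty,\mu')\to L^2(\Lambda^\infty,\mu)$ must intertwine these multiplication representations of $C_0(\Lambda^\infty)$, and hence must intertwine the associated projection-valued measures $P$ and $P'$ given by Proposition \ref{prop-proj-valued-measure-gen-case}. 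Because $P$ is supported (as an operator-valued measure) on $Y^c$ in $\mu$-measure sense while $P'$ is supported on $Y$, an intertwiner would have to carry functions supported on the $\mu'$-full set $Y$ to functions supported on $Y$, but the target space $L^2(\Lambda^\infty,\mu)$ sees $Y$ as null. This forces $U=0$, giving disjointness.

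The main obstacle I anticipate is the converse direction, specifically the step that upgrades ``$U$ intertwines the two representations of $C^*(\Lambda)$'' to ``$U$ intertwines the two projection-valued measures on $\Lambda^\infty$.'' The representations of the full $C^*(\Lambda)$ do not obviously restrict to the \emph{same} representation of $C_0(\Lambda^\infty)$ on the two different Hilbert spaces — they act on $L^2(\mu)$ and $L^2(\mu')$ respectively — so I need to be careful about how an intertwiner interacts with the two distinct multiplication algebras $\{M^{\mu}_{\chi_{Z(\lambda)}}\}$ and $\{M^{\mu'}_{\chi_{Z(\lambda)}}\}$. The clean way is to push everything through the projection-valued measures: $U P'(B) = P(B) U$ for all Borel $B$, obtained by extending from cylinder sets (where it follows from $U T_\lambda' T_\lambda'^{\,*} = T_\lambda T_\lambda^* U$) via the uniqueness in Lemma \ref{lem:kolmogorov-projection-valued-measures}. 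Then taking $B = Y$ yields $U = U P'(Y) = P(Y) U = 0$ since $\mu(Y^c)=0$ makes $\chi_Y = \chi_{\Lambda^\infty}$ in $L^2(\mu)$ while $P(Y)=M_{\chi_Y}=0$ in the $\mu$-picture (because $\mu(Y)=0$); reconciling these two viewpoints cleanly is the delicate part. I would lean on the finiteness of the measures and the fact that cylinder sets generate the Borel structure to make the extension and the support argument rigorous, and I expect this to be where the bulk of the careful measure-theoretic bookkeeping is required. This is the content of \cite[Theorem 3.10]{FGJKP-monic} in the finite case, so I would model the argument on that proof, adapting it to the $\sigma$-finite/Radon setting established earlier in the paper.
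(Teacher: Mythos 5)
Your proposal is correct and takes essentially the same route as the paper: your forward direction is the paper's argument in contrapositive form, using the very same intertwiner (multiplication by $h=\sqrt{d\mu'_{\mathrm{ac}}/d\mu}$ on the absolutely continuous part), with the invariance of the Lebesgue decomposition and the resulting identity $f_\lambda\,(h\circ\tau^{d(\lambda)}) = f'_\lambda\, h$ supplied by \cite[Proposition 3.6]{FGJKP-monic}, exactly as the paper does. For the converse, the paper simply invokes \cite[Theorem 3.10]{FGJKP-monic} verbatim, and your projection-valued-measure argument (extend $U P'(Z(\lambda)) = P(Z(\lambda))U$ from cylinder sets to all Borel sets by uniqueness of the extension, then evaluate at the set carrying $\mu'$ but $\mu$-null to force $U=0$) is precisely that cited argument.
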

 
 \begin{proof}
 We first observe that \cite[Proposition 3.6]{FGJKP-monic}, although stated for finite $k$-graphs only, still holds for row-finite $k$-graphs.

Assume that the representations $\{ T_\lambda\}_\lambda, \{T_\lambda'\}_\lambda$, are disjoint. Following \cite[Proposition 3.6]{FGJKP-monic}, decompose $ d\mu' = h^2 d\mu +d\nu$, where $\nu$ is supported on $B$ and $\mu$ is supported on $A = X \backslash B$, the sets $A, B$ are invariant under the prefixing and coding maps, and $h \geq 0$. {Since $A$ is invariant under the prefixing map $\tau^n$ for all $ n$, $L^2(A, \mu')$ is an invariant subspace
 for the representation $\{ T_\lambda' : \lambda \in \Lambda\}$. }
 
 Define the operator $W$ on $L^2(X, \mu')$ by $W(f) = f\cdot h$ if $f \in  L^2(A, \mu')$, and $W(f) = 0$ on the
 orthogonal complement of $L^2(A, \mu') \subseteq L^2(X, \mu')$.  To  check that $W$ is intertwining, we recall from {\cite[Proposition 3.6(d)]{FGJKP-monic}}
  and the non-negativity condition on $\{f_\lambda\}$ and $\{f'_\lambda\}$ that $f_\lambda \cdot (h \circ \tau^{d(\lambda)} = f_\lambda' \cdot h$ on $A$.  We consequently  obtain the following equalities for $f \in L^2(A, \mu')$:
 \[
 T_\lambda W (f )= f_\lambda(h \circ \tau^{d(\lambda)})(f \circ \tau^{d(\lambda)}) = f_\lambda' \, h\, (f \circ \tau^{d(\lambda)}) = W T_\lambda'(f).
 \]
 If $f \in L^2(A, \mu')^\perp$ then $W(f) = T_\lambda'(f) = 0$ and so the above equality holds on $L^2(X, \mu')$.
 Since $W$ intertwines the representations $\{ T_\lambda\}_{\lambda \in \Lambda}, \{ T_{\lambda}'\}_{\lambda \in \Lambda}$ of $C^*(\Lambda)$, we must have $W =0$; hence $h=0$, so $\mu, \mu'$ are mutually singular.
 
 {The proof of the final statement follows verbatim as in \cite[Theorem 3.10]{FGJKP-monic}, so we refrain from presenting it here.}
 \end{proof}

\begin{example} For a simple example that the converse of the above proposition does not hold when $X \not= \Lambda^\infty$, let $E$ be the graph with vertices $v_1, v_2$ and edges $e_1, e_2$ such that $s(e_i)=v_i$, $i=1,2$, $r(e_1)=v_2$ and $r(e_2)=v_1$. As with Example~\ref{seal}, we first define a branching system in the sense of \cite{goncalves-li-royer-SBFS}, which then gives us a semibranching function system. Let $X=\{1,2,3,4\}$ and define $R_{e_1}=D_{v_2}=\{1,2\}$ and $R_{e_2}=D_{v_1}=\{3,4\}$. Let $f_{e_1}:D_{s(e_1)}\rightarrow R_{e_1}$ be given by $f_{e_1}(x)=x-2$ and $f_{e_2}:D_{s(e_2)}\rightarrow R_{e_2}$ be given by $f_{e_2}(x)=x+2$. Now we define two mutually singular measures. Let $\mu_1({2i+1})=1$ and $\mu_1({2i+2})=0$  $i=0,1$, and $\mu_2({2i+1})=0$ and $\mu_2({2i+2})=1$  $i=0,1$. Notice that both measures are non-zero on the sets $D_{v_i}$, $i=1,2$. So, following \cite[Proposition~3.12 and Remark~3.11]{goncalves-li-royer-SBFS} we get two semibranching function systems. Let $\pi_1$ and $\pi_2$ be the induced representation on $\ell_2(X,\mu_1)$ and $\ell_2(X,\mu_2)$ respectively. They are clearly unitary equivalent, but the measures $\mu_1$ and $\mu_2$ are mutually singular.
\end{example}

\section{On atomic irreducible representations of $C^*(\Lambda)$} 
\label{sec:atomic}

The study of  (irreducible) purely atomic representations of $C^*(\Lambda)$ was initiated  in \cite{FGJKP-atomic}. Previously to this, in the algebraic context, irreducible representations of Leavitt path algebras (Chen modules) were studied in \cite{chen}. Many results in \cite{FGJKP-atomic} (restricted to 1-graphs) and \cite{chen} can be put in correspondence, as is usual in the development of the theory of graph algebras. The most natural candidate for the analytical counterpart of a purely algebraic (semi)-branching system is to equip the (semi)-branching system with counting measure (see \cite{CantoGoncalves, GoncalvesRoyer1, GoncalvesRoyer2} for some of the algebraic results regarding representations arising from branching systems). However, it is not immediately obvious that a  representation arising from a $\Lambda$-semibranching function system on a measure space $(X, \mu)$ which has an atom  is purely atomic in the sense of \cite{FGJKP-atomic}. We prove in Theorem \ref{thm:atom-means-purely-atomic} below that this is indeed the case, {if the associated $\Lambda$-projective representation is irreducible.}   Our results  in this section (when restricted to the 1-graph case) therefore clarify and strengthen the parallel between the algebraic and analytical settings.  {In particular, we show in Theorem \ref{vinhosoon} that any irreducible $\Lambda$-projective representation on an atomic measure space must be monic.
Finally, 
we finish the paper with an application of our results in the context of Naimark's problem for $k$-graph $C^*$-algebras. }

We begin our analysis  by recalling the definition of purely atomic  representations.

 \begin{defn}  \label{defatomic}
 \cite[Definition~3.1]{FGJKP-atomic}
 
Let $\Lambda$ be a row-finite $k$-graph with no sources.
A representation $\{ t_\lambda\}_{\lambda \in \Lambda}$ of  $C^*(\Lambda)$ on a Hilbert space ${\mathcal H}$ is called \emph{purely atomic}  if there exists a Borel subset $\Omega\subset\Lambda^{\infty}$ such that the projection valued measure $P$ defined on the Borel sets of $\Lambda^{\infty}$ as in Proposition~\ref{prop-proj-valued-measure-gen-case} satisfies
\begin{enumerate}
 \item[(a)] $P(\Lambda^{\infty}\backslash \Omega)\;\;=\; 0_{\mathcal H},$
 \item[(b)] $P(\{\omega\})\not=0_{\mathcal H}$ for all $\omega\in \Omega$,
 \item[(c)] $\bigoplus_{\omega\in \Omega}P(\{\omega\})=\;\text{Id}_{\mathcal H},$
 \end{enumerate}
 where the sum on the left-hand side of (c) converges in the strong operator topology.
 
 Thus, a representation of $C^*(\Lambda)$ is said to be \emph{purely atomic} if the corresponding projection-valued measure is purely atomic on the Borel $\sigma$-algebra ${\mathcal B}_o(\Lambda^{\infty})$ of $\Lambda^\infty$. \end{defn}

Let $\pi$ be a representation of $C^*(\Lambda)$ arising from a $\Lambda$-semibranching function system
on $(X,\mu)$. In the results below, it will be key to find a ``coding" of $X$ in the path space $\Lambda^\infty$. This will be accomplished by defining a map from a ``large" subset of $X$ to the path space $\Lambda^\infty$. The ideas presented here generalize the constructions in \cite{chen, GoncalvesRoyer} to the $k$-graph setting.

Given a $\Lambda$-semibranching function system on $(X, \mu)$, define $Y \subseteq X$ by discarding a set of measure zero, so that
\[ Y = \bigsqcup_{v\in \Lambda^0} (Y \cap R_v), \qquad  Y \cap R_\lambda \subseteq Y \cap D_{r(\lambda)} \ \forall \ \lambda \in \Lambda,\]and 
	 for all $v \in \Lambda^0$ and all $n \in \N^k$,
	\[ Y \cap R_v = \bigsqcup_{\lambda \in v\Lambda^n} Y \cap R_{\lambda}.\]
	We define a map $\phi$ from $Y$ to $\Lambda^\infty$ as follows.  Pick $y \in Y$; then $y \in R_{\lambda_1}$ for a unique $\lambda_1 \in \Lambda^{(1,\ldots, 1)}$. In fact, for any $n \in \N$, we have $y \in R_{\lambda_n}$ for a unique $\lambda_n \in \Lambda^{(n, \ldots, n)}$, and for each $k < n$ we have a decomposition of $\lambda_n$ as $\lambda_n = \lambda_k \lambda_{n,k}$, where $d(\lambda_{n,k}) = (n-k, n-k, \ldots, n-k)$.  Thus, \cite[Remark 2.2]{KP} implies that the sequence $(\lambda_n)_{n\in \N}$ determines a unique path in $\Lambda^\infty$; this is $\phi(y)$. 
	Notice moreover that  the prefixing and coding maps on $Y$ are taken via $\phi$ to the standard prefixing and coding maps $\sigma^n, \sigma_\lambda$ on $\Lambda^\infty$.

	\begin{rmk} The map $\phi$ defined above does not need to be onto. The same example used in \cite{GoncalvesRoyer} in the algebraic context (for algebras associated to ultragraphs) also applies here. More precisely, let $E$ be the graph with one vertex $u$ and two edges, say $e_1$ and $e_2$. Let $R_{e_1}$ and $R_{e_2}$ be two infinite countable disjoint sets, $X=D_u=R_{e_1}\cup R_{e_2}$ with counting measure, and let $\tau_{e_i}:R_{e_i} \rightarrow D_u$ be any bijection, for $i\in \{1,2\}$. Then $X$ is countable, but $\Lambda^\infty$ is not.  
	\end{rmk}
	
	We can now show that an irreducible representation arising from a $\Lambda$-semibranching function system on $(X, \mu)$, where $\mu$ has  an atom, must be purely atomic. {The orbit of an infinite path was defined in Equation \eqref{eq:orbit}.}
	
	\begin{thm} 
	\label{thm:irred-repres-atom-purely-atomic} Let $\Lambda$ be a row-finite source-free $k$-graph and $\pi$  be an irreducible representation of $C^*(\Lambda)$ arising from a  $\Lambda$-semibranching function system on $(X,\mu)$, where $\mu$ has an atom, say $y\in X$.  Then $\pi$ is  purely atomic and the associated projection valued measure is supported on the orbit of $\phi(y)$.
	\label{thm:atom-means-purely-atomic}
	\end{thm}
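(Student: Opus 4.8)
The plan is to produce a countable, full-measure orbit of atoms inside $X$, transport it to $\Lambda^\infty$ via the coding map $\phi$, and then read off the atomic structure of the projection-valued measure $P$ of Proposition~\ref{prop-proj-valued-measure-gen-case}. First I would introduce the orbit of the atom inside $X$,
\[ \mathcal{O}(y) := \{ \tau_\lambda(\tau^n(y)) : n \in \N^k, \ \lambda \in \Lambda, \ \tau^n(y) \in D_\lambda \}, \]
and show it consists entirely of atoms. Since $y$ is an atom and the Radon--Nikodym derivative $\Phi_\lambda = \frac{d(\mu\circ\tau_\lambda)}{d\mu}$ is strictly positive a.e.\ (Definition~\ref{def:SBFS}), and an a.e.\ identity cannot fail on a set of positive measure, every relevant a.e.\ relation holds pointwise at an atom; hence $\mu(\tau_\lambda(\{z\})) = \Phi_\lambda(z)\,\mu(\{z\}) > 0$ whenever $z$ is an atom, and dually each coding map $\tau^n$ sends atoms to atoms. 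As $\Lambda$ and $\N^k$ are countable, $\mathcal{O}(y)$ is a countable set of atoms.

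Next I would prove that $\mathcal{O}(y)$ is invariant under every coding map. Using the decomposition $R_\lambda = \bigsqcup_{\eta \in s(\lambda)\Lambda^{c}} R_{\lambda\eta}$ (valid up to null sets) together with the factorization property and the identity $\tau^m\circ\tau_\lambda = \text{id}$, I would compute for each $m$ that $\tau^m(\mathcal{O}(y)) \subseteq \mathcal{O}(y)$, and that for a.e.\ $z \in (\tau^m)^{-1}(\mathcal{O}(y))$ one has $z = \tau_\rho(\tau^m(z)) \in \mathcal{O}(y)$, where $\rho$ is the unique degree-$m$ path with $z \in R_\rho$. Combining these gives $\mu\big((\tau^m)^{-1}(\mathcal{O}(y)) \Delta \mathcal{O}(y)\big) = 0$. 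Since $\pi$ is irreducible, Proposition~\ref{ventosul} shows the coding maps are jointly ergodic; applying this to the invariant set $\mathcal{O}(y)$, which has $\mu(\mathcal{O}(y)) \geq \mu(\{y\}) > 0$, forces $\mu(X \setminus \mathcal{O}(y)) = 0$. Thus $\mu$ is purely atomic, concentrated on the countable set $\mathcal{O}(y)$.

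Finally I would transfer this structure to $\Lambda^\infty$. From Remark~\ref{rmk:Lambda-proj-is-mult}, $P(Z(\lambda)) = t_\lambda t_\lambda^* = M_{\chi_{R_\lambda}}$, and $\phi^{-1}(Z(\lambda)) = R_\lambda$ up to a null set. Writing $\{\omega\} = \bigcap_n Z(\omega(0,(n,\ldots,n)))$ and using continuity of $P$, I obtain $P(\{\omega\}) = M_{\chi_{\phi^{-1}(\omega)}}$, so $P(\{\omega\}) \neq 0$ precisely when $\mu(\phi^{-1}(\omega)) > 0$. Because $\phi$ intertwines the prefixing and coding maps with $\sigma_\lambda, \sigma^n$, one has $\phi(\mathcal{O}(y)) = \text{Orbit}(\phi(y))$; as each point of $\mathcal{O}(y)$ is an atom, $\mu(\phi^{-1}(\omega)) > 0$ for every $\omega \in \text{Orbit}(\phi(y))$, giving condition (b) of Definition~\ref{defatomic} with $\Omega = \text{Orbit}(\phi(y))$. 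Since the sets $\phi^{-1}(\omega)$ are disjoint and their union $\phi^{-1}(\Omega) \supseteq \mathcal{O}(y)$ has full measure, the orthogonal projections $P(\{\omega\})$ satisfy $\bigoplus_{\omega\in\Omega} P(\{\omega\}) = M_{\chi_{\phi^{-1}(\Omega)}} = \text{Id}$ and $P(\Lambda^\infty \setminus \Omega) = 0$, which are conditions (c) and (a). Hence $\pi$ is purely atomic with $P$ supported on $\text{Orbit}(\phi(y))$.

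The step I expect to be the main obstacle is the invariance of $\mathcal{O}(y)$: one must verify both $\tau^m(\mathcal{O}(y)) \subseteq \mathcal{O}(y)$ and that $(\tau^m)^{-1}(\mathcal{O}(y))$ contributes nothing of positive measure, while carefully tracking the a.e.\ identities $\tau^m\circ\tau_\lambda = \text{id}$, the disjoint decompositions of the range sets, and the path factorizations (Lemma~\ref{lem:measure-zero-preserved} handles the propagation of null sets). The key simplification throughout is that every a.e.\ relation holds exactly at an atom, so the argument reduces to pointwise path combinatorics rather than delicate measure estimates.
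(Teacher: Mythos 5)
Your proposal is correct, but it takes a genuinely different route from the paper's own proof. The paper disposes of the theorem in a few lines: it invokes \cite[Proposition~3.8]{FGJKP-atomic} to reduce the entire statement to showing that $\{\gamma\}$, $\gamma = \phi(y)$, is an atom of the projection-valued measure $P$, and then verifies $P(\{\gamma\})\delta_y = \delta_y$ directly --- using $P(Z(\gamma_n))\delta_y = M_{\chi_{R_{\gamma_n}}}\delta_y = \delta_y$ for the initial segments $\gamma_n$ of $\gamma$, the vanishing of $P(Z(r(\gamma))\setminus Z(\gamma_n))\delta_y$, and the \emph{regularity} of $P$ established in Proposition~\ref{prop-proj-valued-measure-gen-case} to pass to the singleton. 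You never use that external result: instead you bring in irreducibility through Proposition~\ref{ventosul} (joint ergodicity of the coding maps), show the countable set of atoms $\mathcal{O}(y)$ is invariant and hence of full measure, and then verify conditions (a)--(c) of Definition~\ref{defatomic} by hand, replacing the appeal to regularity by $\sigma$-additivity of $P$ along the decreasing intersection $\{\omega\}=\bigcap_n Z(\omega(0,(n,\ldots,n)))$, which suffices here since a singleton in $\Lambda^\infty$ is a countable decreasing intersection of cylinder sets. What your route buys: it is self-contained within this paper (no reliance on the purely-atomic machinery of \cite{FGJKP-atomic}), and it yields the stronger byproduct that $\mu$ itself is concentrated on the countable orbit $\mathcal{O}(y)$, with every point an atom. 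What the paper's route buys: brevity, and it sidesteps the invariance computation of your second step entirely --- that step is indeed the delicate one, but your mechanism for it is sound: since every element of $\mathcal{O}(y)$ is an atom (positivity of $\Phi_\lambda$ a.e.\ plus $\mu(\{z\})>0$ forces $\Phi_\lambda(z)>0$), all the a.e.\ identities $\tau^{d(\lambda)}\circ\tau_\lambda = \mathrm{id}$, $\tau_\lambda\circ\tau^{d(\lambda)}|_{R_\lambda}=\mathrm{id}$, $\tau_\lambda\tau_\nu=\tau_{\lambda\nu}$, and the a.e.\ disjointness of the sets $R_\lambda$ of a fixed degree hold \emph{pointwise} on $\mathcal{O}(y)$, so both inclusions $\tau^m(\mathcal{O}(y))\subseteq\mathcal{O}(y)$ and $(\tau^m)^{-1}(\mathcal{O}(y))\subseteq_{a.e.}\mathcal{O}(y)$ go through exactly as you sketch.
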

	\begin{proof}

Recall that the projection valued-measure $P$ associated to $\pi$ assigns to any Borel set $A \subseteq \Lambda^\infty$ a projection $P(A) \in \mathcal{B}(L^2(X,\mu))$, see Proposition~\ref{prop-proj-valued-measure-gen-case}. Also recall that {by Remark \ref{rmk:Lambda-proj-is-mult}},  on cylinder sets we have
$\ t_\lambda t_\lambda^* = M_{\chi_{R_\lambda}}.$
Let $y \in X$ be an atom for $(X, \mu)$ and let   $ \gamma=\phi(y) $. 
By \cite[Proposition~3.8]{FGJKP-atomic} it is enough to prove that $\{\gamma\}$ is an atom of $P$ -- that is, that $P(\{ \gamma\})\not= 0$.  We will show that $P(\{\gamma\})(\delta_y) \not= 0$.

Observe first that  $P(Z(\gamma_n))(\delta_y)  = t_{\gamma_n} t_{\gamma_n}^*(\delta_y) = {M_{\chi_{R_{\gamma_n}}}(\delta_y )}= \delta_y$, {where $\gamma_n$ are the initial segments of $\gamma$ defined in the construction of $\phi(y)$.} Moreover, 
for any $n$, 

\[ P(Z(r(\gamma)) \backslash Z(\gamma_n)) (\delta_y) = \sum_{\gamma_n \not= \lambda \in r(\gamma)\Lambda^{(n, \ldots, n)}} P(Z(\lambda)) (\delta_y)  = \sum_{\gamma_n \not= \lambda \in r(\gamma)\Lambda^{(n, \ldots, n)} }
	M_{\chi_{R_\lambda}} \delta_y  = 0 .\]
By the regularity 
 of the projection-valued measure $P$, it follows that $P(Z(r(\gamma)) \backslash \{\gamma\})(\delta_y) = 0$.  However, since $P(Z(r(\gamma)))(\delta_y) = \delta_y $, the finite additivity of $P$ implies that 
\[ P(\{ \gamma\}) \delta_y = \delta_y.\]
As $\delta_y \in L^2(Y, \mu)$ is nonzero, we conclude that $\{\gamma\}$ is an atom of $P$.
	\end{proof}

We now state a row-finite version of \cite[Theorem~3.12]{FGJKP-atomic}. Since the proof of this theorem is analogous to the finite case proof given in \cite{FGJKP-atomic} we refrain from presenting it here. We only remark that the forward direction of the proof in \cite[Theorem~3.12]{FGJKP-atomic} uses Theorems 3.13 and 4.2  in \cite{FGJKP-monic}, for which we have row-finite versions, see Theorem~\ref{thm:ergodic} and Theorem~\ref{thm-characterization-monic-repres} respectively. 

\begin{thm} \label{thm:monic-rep-one-dim}
\cite[Theorem~3.12]{FGJKP-atomic}
Let $\Lambda$ be a row-finite $k$-graph with no sources. Let $\{t_\lambda: \lambda \in \Lambda \}$ be a purely atomic representation of $C^*(\Lambda)$ on a separable Hilbert space H. Suppose that $t_\lambda t_\lambda^* \neq 0$ for all $\lambda \in \Lambda$. Then the representation is monic if,
and only if, for every atom $x \in \Lambda^\infty$, $P(\{x\})$ is one-dimensional. Moreover,
in this case the associated measure $\mu$ arising from the monic representation (see Equation~\eqref{eq:monic_measure})
is atomic.
\end{thm}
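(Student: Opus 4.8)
The plan is to work entirely with the projection-valued measure $P$ from Proposition~\ref{prop-proj-valued-measure-gen-case}, for which $P(Z(\lambda)) = t_\lambda t_\lambda^*$ and whose atoms are exactly the points of the support set $\Omega \subseteq \Lambda^\infty$ of Definition~\ref{defatomic}. Recall that ``purely atomic'' means $H = \bigoplus_{\omega \in \Omega} P(\{\omega\}) H$ with each $P(\{\omega\}) \neq 0$, and that $\Omega$ is at most countable since $H$ is separable. A point $x \in \Lambda^\infty$ is an atom of $P$ precisely when $x \in \Omega$, because $x \notin \Omega$ forces $P(\{x\}) \leq P(\Lambda^\infty \setminus \Omega) = 0$. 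Note also that the standing hypothesis $t_\lambda t_\lambda^* \neq 0$ gives $t_\lambda \neq 0$ for all $\lambda$, which is the first requirement in the definition of monicity (Definition~\ref{def:lambda-monic-repres}); so in both directions the real content concerns the monic \emph{vector}.

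For the forward direction, suppose the representation is monic. Then Theorem~\ref{thm-characterization-monic-repres} identifies $H$ isometrically with $L^2(\Lambda^\infty, \mu_\pi)$ so that $P(A)$ becomes the multiplication operator $M_{\chi_A}$ for every Borel set $A$. For an atom $x \in \Omega$ the operator $P(\{x\}) = M_{\chi_{\{x\}}}$ projects onto the functions supported on the single point $x$; since $\mu_\pi(\{x\}) = \langle \xi, P(\{x\})\xi\rangle > 0$ (otherwise $\chi_{\{x\}} = 0$ in $L^2$ and $P(\{x\})$ would vanish), this space is spanned by $\chi_{\{x\}}$ and is therefore exactly one-dimensional. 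Hence $P(\{x\})$ is one-dimensional for every atom $x$, as claimed.

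For the converse, assume each $P(\{\omega\})$ is one-dimensional, and pick a unit vector $\xi_\omega \in P(\{\omega\}) H$ for every $\omega \in \Omega$. Enumerating $\Omega = \{\omega_1, \omega_2, \ldots\}$, the family $\{\xi_{\omega_n}\}$ is an orthonormal basis of $H$, and I would set $\xi = \sum_n 2^{-n}\xi_{\omega_n} \in H$, so that $P(\{\omega_n\})\xi = 2^{-n}\xi_{\omega_n}$ with every coefficient nonzero. To prove $\xi$ is monic it suffices to show each $\xi_{\omega_n}$ lies in $M := \overline{\text{span}}\{P(Z(\lambda))\xi : \lambda \in \Lambda\}$. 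Fixing $x = \omega_n$ and writing $\lambda_m = x(0,(m,\ldots,m))$ for its initial segments, one has $\bigcap_m Z(\lambda_m) = \{x\}$ (distinct infinite paths differ at some finite level), and continuity from above of $P$ yields $P(Z(\lambda_m)) \to P(\{x\})$ in the strong operator topology. Thus $P(Z(\lambda_m))\xi \to P(\{x\})\xi = 2^{-n}\xi_{\omega_n}$, and as each $P(Z(\lambda_m))\xi \in M$ with $M$ closed, we obtain $\xi_{\omega_n} \in M$. Since this holds for all $n$, $M = H$ and the representation is monic.

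The ``moreover'' statement is then immediate: in the monic case the measure $\mu_\pi$ of Equation~\eqref{eq:monic_measure} satisfies $\mu_\pi(\Lambda^\infty \setminus \Omega) = \langle \xi, P(\Lambda^\infty \setminus \Omega)\xi\rangle = 0$, so $\mu_\pi$ is concentrated on the countable set $\Omega$ and is therefore atomic. The step I expect to be the main obstacle is the converse separation argument: one must carefully justify that the nested cylinder sets shrink to a single atom and that the resulting strong-operator convergence isolates each basis vector. Both points go through because row-finiteness makes the cylinder sets a compact-open basis generating $\mathcal{B}_o(\Lambda^\infty)$, while the regularity of $P$ from Proposition~\ref{prop-proj-valued-measure-gen-case} (together with the fact that each $v \mapsto \langle P(\cdot)v,v\rangle$ is a finite measure) secures the continuity from above even when $\mu_\pi$ is not finite.
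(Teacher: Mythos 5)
Your proof is correct, and it supplies the details that the paper itself omits: the paper's ``proof'' of this theorem is just the remark that the finite-case argument of \cite[Theorem~3.12]{FGJKP-atomic} carries over once one has row-finite versions of its two ingredients, namely Theorem~\ref{thm:ergodic} and Theorem~\ref{thm-characterization-monic-repres}. Your forward direction follows the route the paper indicates --- use Theorem~\ref{thm-characterization-monic-repres} to transfer the representation to $L^2(\Lambda^\infty,\mu_\pi)$ --- but you then finish with the uniqueness of the projection-valued measure extension (Proposition~\ref{prop-proj-valued-measure-gen-case}, via Lemma~\ref{lem:kolmogorov-projection-valued-measures}) to identify $P(A)$ with $M_{\chi_A}$ for every Borel $A$, rather than invoking the commutant description of Theorem~\ref{thm:ergodic}; this is a mild but genuine streamlining of the cited argument. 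Your converse (orthonormal family of atom vectors, the weighted sum $\xi=\sum_n 2^{-n}\xi_{\omega_n}$, and $P(Z(\lambda_m))\to P(\{x\})$ strongly along nested cylinder sets shrinking to an atom) is the same style of direct argument used in the finite case, and it remains valid here: the nesting $Z(\lambda_{m+1})\subseteq Z(\lambda_m)$ with $\bigcap_m Z(\lambda_m)=\{x\}$, together with countable additivity of $P$ (equivalently, continuity from above of the finite scalar measures $A\mapsto\langle P(A)v,v\rangle$), gives the strong convergence without any compactness of $\Lambda^\infty$ or finiteness of $\mu_\pi$. Two points you could make explicit, though neither is a substantive gap: first, that a unitary equivalence of representations intertwines the associated projection-valued measures (immediate from uniqueness of the extension from cylinder sets, since on cylinder sets both are given by $t_\lambda t_\lambda^*$ and $S_\lambda S_\lambda^*=M_{\chi_{Z(\lambda)}}$ respectively); second, in the ``moreover'' step, that $\mu_\pi(A)=\langle\xi,P(A)\xi\rangle$ holds for all Borel $A$ and not only on cylinder sets, which follows from uniqueness of the Carath\'eodory extension because $\mu_\pi$ is finite on each $Z(v)$ and hence $\sigma$-finite.
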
 

Using the above theorem we will show that any representation arising from a $\Lambda$-semibranching function system on $(X,\mu)$, where $\mu$ has an atom, is monic and so if such a representation is irreducible we conclude that it is unitarily equivalent to a representation arising from a $\Lambda$-semibranching function system on the path space with the standard coding and prefixing maps (see Theorem~\ref{thm-characterization-monic-repres}). This is the final step to complete the analytical correspondence with the results in \cite{chen, GoncalvesRoyer} for 1-graphs. 

\begin{thm}\label{vinhosoon} Let $\Lambda$ be a row-finite source-free $k$-graph and $\pi$  be an irreducible representation of $C^*(\Lambda)$ arising from a  $\Lambda$-semibranching function system on $(X,\mu)$, where $\mu$ has an atom. Then $\pi$ is monic.
\end{thm}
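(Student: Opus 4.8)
The plan is to combine the two results immediately preceding this statement. Theorem~\ref{thm:irred-repres-atom-purely-atomic} guarantees that $\pi$ is purely atomic, with projection-valued measure $P$ supported on the orbit of $\phi(y)$, while Theorem~\ref{thm:monic-rep-one-dim} reduces monicity to a pointwise dimension count on the atoms of $P$. To apply the latter I first record that its standing hypotheses hold: since $\pi$ arises from a $\Lambda$-semibranching function system, Remark~\ref{rmk:Lambda-proj-is-mult} gives $t_\lambda t_\lambda^* = M_{\chi_{R_\lambda}}$, and $\mu(R_\lambda) > 0$ always, so $t_\lambda t_\lambda^* \neq 0$ for every $\lambda \in \Lambda$. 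Thus it suffices to prove that $P(\{x\})$ is one-dimensional for every atom $x$ of $P$.

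To establish one-dimensionality I would exploit irreducibility through the corner algebra at an atom. Fix an atom $x$ and set $\mathcal H_x := P(\{x\})\mathcal H$. Because $\pi$ is irreducible its commutant is trivial, so $\pi(C^*(\Lambda))$ is a nondegenerate $*$-algebra whose strong closure is, by the double commutant theorem, all of $B(\mathcal H)$; compressing by the projection $P(\{x\})$ (compression being strongly continuous on bounded sets) shows that $P(\{x\})\,\pi(C^*(\Lambda))\,P(\{x\})$ is a $*$-subalgebra of $B(\mathcal H_x)$ that is strongly dense in $B(\mathcal H_x)$. On the other hand, writing $\pi(C^*(\Lambda)) = \overline{\operatorname{span}}\{t_\lambda t_\mu^*\}$ and using Proposition~\ref{prop-atomic-basic-equns}, I would check that the compression $V_{\lambda,\mu} := P(\{x\})\,t_\lambda t_\mu^*\,P(\{x\})$ is nonzero only when $\lambda$ and $\mu$ are both initial segments of $x$ satisfying $\sigma^{d(\lambda)}(x) = \sigma^{d(\mu)}(x)$; indeed $t_\mu^* P(\{x\}) \neq 0$ forces $x \in Z(\mu)$, and the final $P(\{x\})$ forces $\lambda\,\sigma^{d(\mu)}(x) = x$. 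The admissible labels $d(\lambda) - d(\mu)$ then range over the isotropy subgroup $\{\, d(\lambda)-d(\mu): \lambda,\mu \text{ initial segments of } x,\ \sigma^{d(\lambda)}(x)=\sigma^{d(\mu)}(x)\,\}$ of $\mathbb Z^k$, which is abelian.

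The decisive point is that the corner $P(\{x\})\,\pi(C^*(\Lambda))\,P(\{x\})$ is consequently commutative: it is a homomorphic image of the group $C^*$-algebra of this abelian isotropy group, equivalently, the operators $V_{\lambda,\mu}$ satisfy $V_{\lambda,\mu}^* = V_{\mu,\lambda}$ and $V_{\lambda,\mu}V_{\lambda',\mu'} = V_{\lambda',\mu'}V_{\lambda,\mu}$. A commutative $*$-subalgebra that is strongly dense in $B(\mathcal H_x)$ forces $B(\mathcal H_x)$ itself to be commutative, whence $\dim \mathcal H_x = 1$. Applying this to every atom and invoking Theorem~\ref{thm:monic-rep-one-dim} then yields that $\pi$ is monic. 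I expect the main obstacle to be precisely the verification that the $V_{\lambda,\mu}$ commute: one must show that the $f_\lambda$-weights built into the $\Lambda$-projective representation introduce no noncommutative twist on the abelian isotropy group, carefully using the factorization property together with the relation $\tau^m \circ \tau^n = \tau^{m+n}$, so that the interaction between the sections $\tau_\lambda$ and the coding maps respects the abelian structure even when, as in Example~\ref{ex:periodic-but-not-coding}, the coding maps fail to honor periodicity pathwise. As a cross-check (and a possible simplification of the reduction), one may first use Proposition~\ref{ventosul} to argue that the invariant set consisting of the orbit of the atom $y$ has full measure, so that $(X,\mu)$ is, up to a null set, a countable collection of atoms indexed by this orbit.
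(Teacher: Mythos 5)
Your proposal shares the paper's skeleton exactly: Theorem~\ref{thm:irred-repres-atom-purely-atomic} gives pure atomicity with $P$ supported on the orbit of $\phi(y)$, and Theorem~\ref{thm:monic-rep-one-dim} converts one-dimensionality of the atom projections into monicity (your verification that $t_\lambda t_\lambda^*\neq 0$ via $\mu(R_\lambda)>0$ is the right hypothesis check). Where you differ is the middle step: the paper disposes of it in one line by citing \cite[Proposition~3.10(c)]{FGJKP-atomic} (irreducibility of a purely atomic representation forces $\dim P(\{\phi(y)\})\mathcal H = 1$) together with \cite[Proposition~3.3]{FGJKP-atomic} (propagation to every atom in the orbit), while you reprove this from scratch by compressing to the corner at an atom and exploiting the abelian isotropy subgroup of $\Z^k$. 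Your route is longer but self-contained, and it exposes the mechanism — abelianness of the isotropy group plus strong density of the compressed algebra — that the citation hides; the paper's route is shorter but outsources the real content to the companion paper.

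Two comments on the step you flag as the main obstacle. First, the worry that the $f_\lambda$-weights might twist the isotropy representation does not materialize, because the whole computation can (and should) be carried out at the level of the abstract Cuntz--Krieger family $\{t_\lambda\}$ and the projection-valued measure $P$, where the weights never appear: using the singleton versions of Proposition~\ref{prop-atomic-basic-equns}, namely $t_\mu^* P(\{x\}) = P(\{\sigma^{d(\mu)}(x)\})\,t_\mu^*$ and $P(\{x\})\,t_\lambda = t_\lambda\, P(\{\sigma^{d(\lambda)}(x)\})$, one checks that $V_{\lambda\nu,\mu\nu} = V_{\lambda,\mu}$ for every initial segment $\nu$ of $\sigma^{d(\lambda)}(x)$; since the initial segment of $x$ of a given degree is unique, any two admissible pairs with the same degree difference $g=d(\lambda)-d(\mu)$ admit a common extension, so $V_{\lambda,\mu}=:U_g$ depends only on $g$, and telescoping pairs of the form $(\gamma,\alpha)$, $(\alpha,\beta)$ gives $U_gU_h=U_{g+h}$, hence commutativity. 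Second, a small correction: $P(\{x\})\,\pi(C^*(\Lambda))\,P(\{x\})$ is not literally a $*$-subalgebra (a product $PaPbP$ need not have the form $PcP$); you should instead work with the norm-closed span of the $U_g$, which by the above computation is a commutative $C^*$-algebra containing the compression of $\pi(C^*(\Lambda))$ — and that is all your density argument needs, since a commutative family that is strongly dense in $B(\mathcal H_x)$ lies in $B(\mathcal H_x)'=\C I$ and so forces $\dim\mathcal H_x=1$. Your closing cross-check via Proposition~\ref{ventosul} (joint ergodicity forces the countable orbit of the atom $y$ to have full measure in $X$) is also correct, and is consistent with the support statement of Theorem~\ref{thm:irred-repres-atom-purely-atomic}.
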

\begin{proof}

Suppose that $y$ is an atom for $\mu$. Then, by Theorem~\ref{thm:irred-repres-atom-purely-atomic}, $\pi$ is purely atomic and, furthermore, $\pi$ is supported on the orbit of $\phi(y)$. Since $\pi$ is irreducible, by \cite[Proposition~3.10(c)]{FGJKP-atomic}, we have that $\text{dim[Range}(P(\{\phi(y)\})] = 1$; this is also true for the atoms in the orbit of $\phi(y)$, see  \cite[Proposition~3.3]{FGJKP-atomic}.  Now apply Theorem~\ref{thm:monic-rep-one-dim}.
\end{proof}

We finish the paper with an application of our results in the context of Naimark's problem for $k$-graph $C^*$-algebras. 

\subsection{Naimark's problem for graph C*-algebras}
\label{sec:naimark}

Naimark's problem asks whether a $C^*$-algebra that has only one irreducible $*$-representation up to unitary equivalence is isomorphic to the $C^*$-algebra of compact operators on some (not necessarily separable) Hilbert space, see \cite{Nai51}.
Recently, Suri and Tomforde showed that this problem has a positive answer for AF graph $C^*$-algebras as well as for $C^*$-algebras of graphs in which each vertex emits a countable number of edges, see \cite{ST}. It is therefore interesting to obtain a combinatorial description of graphs for which the associated $C^*$-algebra is AF and has a unique irreducible representation up to unitary equivalence. This is partially done in \cite{ST}. In fact, in  \cite[Proposition~3.5]{ST} the authors show that if $E$ is a directed graph such that $C^*(E)$ is AF and has a unique irreducible representation up to unitary equivalence, then one of two distinct possibilities must occur:\footnote{{In \cite{ST}, the authors use the reverse convention regarding ranges and sources of edges.  This means that the statement that follows has been changed.}} Either
\begin{itemize}
\item[(1)] $E$ has exactly one source and no infinite paths; or
\item[(2)] $E$ has no source and $E$ contains an infinite path $\alpha := e_1 e_2 \ldots$ with {$r^{-1}( r(e_i)) = \{ e_i \}$} for all $i \in \N$. 
\end{itemize}

We will show next that the converse of \cite[Proposition~3.5]{ST}  does not hold, that is, we will build a graph such that the associated $C^*$-algebra is AF and satisfies item (2) above, but such that there are two non-equivalent irreducible representations.

\begin{example}\label{chuva}
Let $E$ be the graph with vertices $\{v_i\}$, $i=0,1,2,\ldots$ and $\{w_i\}$, $i=1,2,\ldots$, and edges $e_i$ and $f_i$ such that $r(e_1)=r(f_1)=v_0$, $s(e_i)=v_i$ and $s(f_i)=w_i$ for $i=1,2,\ldots$, and $r(e_i)=v_{i-1}$, $r(f_i)=w_{i-1}$ for $i=2,3,\ldots$.  \end{example}

\begin{prop}
Let $E$ be the graph of Example~\ref{chuva}. Then  $C^*(E)$ is AF and satisfies satisfies item (2) above, but there exist two non-equivalent irreducible representations of $C^*(E)$.
\end{prop}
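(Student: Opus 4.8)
The plan is to work out the combinatorial structure of $E$ first. The graph consists of two rays $\cdots \to v_2 \to v_1 \to v_0$ and $\cdots \to w_2 \to w_1 \to v_0$, where $e_i : v_i \to v_{i-1}$ and $f_i : w_i \to w_{i-1}$, meeting at the common sink $v_0$. Every vertex receives at least one edge, so $E$ is source-free; moreover the infinite path $e_2 e_3 e_4 \cdots$ (of range $v_1$) has $r^{-1}(r(e_i)) = \{e_i\}$ for all $i$, since for $i \geq 2$ the vertex $v_{i-1}$ is received only by $e_i$. Hence $E$ satisfies item (2). Along any finite path the index of the source vertex strictly exceeds that of the range vertex, so $E$ has no cycles, and by the standard structure theory of graph $C^*$-algebras (\cite{kpr}) $C^*(E)$ is therefore AF.

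Next I would describe the infinite path space: one checks that $E^\infty = \{x_i\}_{i\geq 0} \sqcup \{y_i\}_{i\geq 0}$, where $x_i = e_{i+1} e_{i+2} \cdots$ has range $v_i$ and $y_i = f_{i+1} f_{i+2} \cdots$ has range $w_i$ (setting $w_0 := v_0$). Since the only edge with source $v_i$ is $e_i$ and the only edge with source $w_i$ is $f_i$, no combination of shifts and prefixings crosses between the two rays; thus the orbits of Definition~\ref{def:cofinal} are exactly $O_1 = \{x_i : i \geq 0\}$ and $O_2 = \{y_i : i \geq 0\}$, and these are disjoint. In particular $E$ is not cofinal, so by Theorem~\ref{prop:cofinal} \emph{no} irreducible representation of $C^*(E)$ arises from a $\Lambda$-semibranching function system. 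The two irreducibles I construct must therefore be produced directly as purely atomic orbit representations rather than through the semibranching framework.

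For each $j \in \{1,2\}$ I would define $\pi_j$ on $\ell^2(O_j)$ by the standard path formulas $\pi_j(s_\lambda)\delta_x = \delta_{\lambda x}$ when $s(\lambda) = r(x)$ (and $0$ otherwise), with adjoint $\pi_j(s_\lambda^*)\delta_x = \delta_{\sigma^{d(\lambda)}(x)}$ when $x \in Z(\lambda)$ (and $0$ otherwise). A routine check, using that on $O_1$ one has $\pi_1(s_{e_i})\delta_{x_i} = \delta_{x_{i-1}}$ and $\pi_1(s_{f_i}) = 0$ (and symmetrically on $O_2$), verifies the Cuntz--Krieger relations (CK1)--(CK4), so each $\pi_j$ is a genuine representation of $C^*(E)$ by universality. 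For irreducibility of $\pi_1$, let $M \neq 0$ be a closed invariant subspace and pick $\xi = \sum_i c_i \delta_{x_i} \in M$ with $c_{i_0} \neq 0$; then $\pi_1(s_{e_1 e_2 \cdots e_{i_0}})$ annihilates every $\delta_{x_j}$ with $j \neq i_0$ and sends $\delta_{x_{i_0}} \mapsto \delta_{x_0}$, so $\delta_{x_0} \in M$, and the operators $\pi_1(s_{e_1 \cdots e_i}^*)$ then produce every $\delta_{x_i}$, forcing $M = \ell^2(O_1)$. The identical argument handles $\pi_2$.

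Finally I would establish inequivalence by comparing a single vertex projection: no path in $O_1$ has range $w_1$, so $\pi_1(s_{w_1}) = 0$, whereas $y_1 \in O_2$ has range $w_1$, so $\pi_2(s_{w_1})$ is the rank-one projection onto $\mathbb{C}\delta_{y_1}$ and is nonzero. A unitary equivalence $U\pi_1 U^* = \pi_2$ would force $\pi_2(s_{w_1}) = U\pi_1(s_{w_1})U^* = 0$, a contradiction, so $\pi_1 \not\cong \pi_2$. The main obstacle here is conceptual rather than computational: because $E$ is non-cofinal and has a sink at $v_0$, neither Theorem~\ref{thm:ergodic} nor Theorem~\ref{thm:ergodic-inf-path-space} applies, so the key move is recognizing that the required irreducibles are the purely atomic representations attached to the two orbits and building them by hand; once that is in place, checking the Cuntz--Krieger relations and the transitivity underlying irreducibility is the only genuinely mechanical step.
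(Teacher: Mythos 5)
Your proof is correct, and it reaches the conclusion by a more self-contained route than the paper. The paper works with the same two representations: it restricts the counting-measure $\Lambda$-semibranching representation on $\ell^2(\Lambda^\infty)$ to the two orbits $[e_1e_2\cdots]$ and $[f_1f_2\cdots]$, and then quotes the atomic machinery --- Theorem~\ref{thm:irred-repres-atom-purely-atomic} for pure atomicity, and \cite[Theorem~3.10(a),(c)]{FGJKP-atomic} for irreducibility and disjointness of the two restrictions. You instead build the orbit representations $\pi_1,\pi_2$ directly on $\ell^2(O_1)$ and $\ell^2(O_2)$, verify the Cuntz--Krieger relations by hand, prove irreducibility by an elementary transitivity argument (any nonzero invariant subspace contains some $\delta_{x_{i_0}}$, hence $\delta_{x_0}$, hence every $\delta_{x_i}$), and prove inequivalence by comparing vertex projections, $\pi_1(s_{w_1})=0\neq\pi_2(s_{w_1})$. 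Your route has two advantages: it is independent of the results of \cite{FGJKP-atomic}, and it cleanly handles a subtlety that the paper passes over --- since $E$ is not cofinal, Theorem~\ref{prop:cofinal} forbids any strict $\Lambda$-semibranching representation from being irreducible, and indeed the restricted representations are \emph{not} $\Lambda$-semibranching representations in the strict sense (some vertex projections vanish in them, whereas a genuine $\Lambda$-semibranching representation has $T_v=M_{\chi_{D_v}}\neq 0$ for every $v$); you correctly flag this and construct the representations outside that framework. You also explicitly verify acyclicity and item (2), making the right choice of witness path $e_2e_3\cdots$ rather than $e_1e_2\cdots$, since $r^{-1}(v_0)=\{e_1,f_1\}$; the paper leaves both points implicit. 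What the paper's approach buys in exchange is brevity and an illustration of its general purely atomic theory; its citation of \cite[Theorem~3.10(a)]{FGJKP-atomic} gives disjointness of the two representations, which for irreducible representations is equivalent to the unitary inequivalence you establish directly.
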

\begin{proof}
Since $E$ has no loops, $C^*(E)$ is AF (see \cite{kpr}).  

Let $\pi$ be the $\Lambda$-semibranching representation of $C^*(E)$ associated to $(\Lambda^\infty, \mu)$, with the standard coding and prefixing maps and counting measure $\mu$.
Let $p$ be the path $e_1e_2 e_3\ldots$ and denote by $[p]$ the orbit of $p$. Similarly, let $q$ be the path $f_1 f_2 f_3\ldots$ and denote by $[q]$ the orbit of $q$. By Theorem~\ref{thm:irred-repres-atom-purely-atomic}, both the restriction of $\pi$ to $[p]$, and its restriction to $[q]$, are purely atomic.   Moreover,  \cite[Theorem 3.10(c)]{FGJKP-atomic}  implies that both of these restrictions are irreducible. However, they are disjoint (hence not equivalent) by \cite[Theorem~3.10(a)]{FGJKP-atomic}.
\end{proof}

For  $k$-graphs (with $k>1)$, some (partial) results on  Naimark's problem can be obtained as follows. First of all, note that if $\Lambda$ is a row-finite k-graph 
with $C^*(\Lambda)$ having only one irreducible $*$-representation up to unitary equivalence, then  $C^*(\Lambda)$  must be simple by \cite[Lemma 2.4]{ST} (which also holds in this case).
Moreover, if $C^*(\Lambda)$ is separable (this for example happens if $\Lambda^0$ is finite) then, by Rosenberg's theorem (see \cite[Page 487]{ST} and \cite[Theorem~4]{Rose}),
$C^*(\Lambda)$ must be isomorphic to the compact operators. Furthermore, since  $C^*(\Lambda)$ is AF, by \cite[Corollary 3.8]{evans-sims}
$\Lambda$  does not contain generalized cycles with an entrance. On the other hand, if $\Lambda^0$ is finite and $\Lambda$ contains no cycles then, by \cite[Theorem 5.2]{evans-sims}, $C^*(\Lambda)$ is a matrix algebra (because it is simple). Finally, if  $\Lambda$ is a locally convex k-graph  such that $\Lambda^0$  is finite and
$\Lambda$ contains no cycles, then $\Lambda^0$ contains a unique source, and
$C^*(\Lambda) = M_{\Lambda v}(\C)$, see \cite[Corollary 5.7]{evans-sims}.

\end{document}